\documentclass[12pt]{amsart}
\usepackage{txfonts}
\usepackage{soul}
\usepackage{amscd,amssymb,mathabx,mathtools}
\usepackage[arrow,matrix,graph,frame,poly,arc,tips]{xy}
\usepackage[dvipsnames]{xcolor}
\usepackage{graphicx}
\usepackage{calrsfs}
\usepackage[labelfont=rm]{subcaption}
\usepackage{tikz}
\usetikzlibrary{decorations.markings}
\usetikzlibrary{shapes}
\usetikzlibrary{backgrounds}
\usetikzlibrary{calc}
\usetikzlibrary{shapes.misc, positioning}
\usepackage{mdwlist}
\usepackage{xfrac}
\DeclareCaptionSubType*{figure}



\usepackage{multirow}
\usepackage{enumerate}
\usepackage{verbatim}
\usepackage{comment}
\usepackage{todonotes}

\topmargin=0.1in
\textwidth5.5in
\textheight7.8in
\oddsidemargin=0.4in
\evensidemargin=0.4in


\DeclarePairedDelimiter{\floor}{\lfloor}{\rfloor}

\DeclarePairedDelimiter{\abs}{\lvert}{\rvert}

\newcommand\on[1]{\operatorname{#1}}
\newcommand{\lst}[0]{\on{St}^{\lim}}
\newcommand{\pa}[0]{\on{par}}

    \newcommand\ba{\begin{align*}}
    \newcommand\ea{\end{align*}}
    \newcommand\be{\begin{enumerate}}
    \newcommand\ee{\end{enumerate}}
    \newcommand\bpp{\begin{prop}}
    \newcommand\epp{\end{prop}}
    \newcommand\bpb{\begin{prob}}
    \newcommand\epb{\end{prob}}
    \newcommand\bd{\begin{defn}}
    \newcommand\ed{\end{defn}}
    \newcommand\bh{\begin{hint}}
    \newcommand\eh{\end{hint}}
  \newcommand{\sq}[0]{\sqrt{2}}

  \newcommand{\pr}[0]{\mathrm{pr}}
  \newcommand{\conv}[0]{\mathrm{conv}}



    \newcommand\bN{\mathbb{N}}
    
    \newcommand\bR{\mathbb{R}}
    \newcommand\bQ{\mathbb{Q}}
    \newcommand\bZ{\mathbb{Z}}
    \newcommand\Z{\mathbb{Z}}
    \newcommand\bH{\mathbb{H}}

    \newcommand\EE{\mathcal{E}}
    \newcommand\FF{\mathcal{F}}

    \newcommand\II{\mathcal{I}}
    
    \newcommand\LL{\mathcal{L}}
    \newcommand\PP{\mathcal{P}}
\newcommand\UU{\mathcal{U}}
\newcommand\VV{\mathcal{V}}
\newcommand\WW{\mathcal{W}}
\newcommand\XX{\mathcal{X}}

\newcommand\vol{\operatorname{vol}}

    \newcommand\supp{\operatorname{supp}}
    \newcommand\Id{\operatorname{Id}}

    \newcommand\diam{\operatorname{diam}}

    \newcommand\cc{\mathrm{cc}}
    
    \newcommand\union{\mathrm{union}}

    \DeclareMathOperator\Homeo{Homeo}

    \newcommand\cM{\mathcal{M}}

    \newcommand\sse{\subseteq}
    \newcommand\co{\colon}
    \newcommand\cH{ G}    
    \DeclareMathOperator\im{im}
    
    \DeclareMathOperator\fix{fix}
    
    \DeclareMathOperator\Aut{Aut}

    \DeclareMathOperator\Diff{Diff}

    \DeclareMathOperator\stab{stab}
    
    \DeclareMathOperator\ro{RO}
    
    \DeclareMathOperator\inte{int}
    \DeclareMathOperator\cl{cl}
    \DeclareMathOperator\fr{fr}
    \DeclareMathOperator\ext{ext}
    \DeclareMathOperator\agape{AGAPE}
    \DeclareMathOperator\cont{Cont}
    \DeclareMathOperator\appl{appl}
    \DeclareMathOperator\Th{{Th}}
    \DeclareMathOperator\act{{Act}}

    \DeclareMathOperator\embed{{Embed}}
    \DeclareMathOperator\param{{Param}}
     \DeclareMathOperator\collar{{collar}}

\newcommand\suppe{\operatorname{{supp}^{\mathrm{e}}}}
\newcommand\bG{\mathbf G}
\newcommand\bM{\mathbf M}


    

    \def\thetitle{{First order rigidity of homeomorphism groups of manifolds}}
    \def\theauthors{{J. de la Nuez Gonz\'alez, Sang-hyun Kim, Thomas Koberda}}
    \usepackage{hyperref}
    \hypersetup{
    	colorlinks=false,
    	plainpages,
    	urlcolor=black,
    	linkcolor=black
    	pdftitle=   \thetitle,
    	pdfauthor=  {\theauthors}
    }

    \theoremstyle{plain}
    \newtheorem{thm}{Theorem}[section]
    \newtheorem{lem}[thm]{Lemma}
    
    \newtheorem{cor}[thm]{Corollary}
    \newtheorem{prop}[thm]{Proposition}
    
    \newtheorem{que}[thm]{Question}
    
    \newtheorem*{claim*}{Claim}

    \theoremstyle{remark}
    
    \newtheorem{rem}[thm]{Remark}

    \theoremstyle{definition}
    \newtheorem{defn}[thm]{Definition}
    \newtheorem{prob}{Problem}[section]

    \begin{document}
    \title\thetitle
    \date{\today}
    \keywords{homeomorphism group, manifold, first order theory, elementary equivalence}
    \subjclass[2020]{Primary: 20A15, 57S05, ; Secondary: 03C07, 57S25, 57M60}
    
    \author[S. Kim]{Sang-hyun Kim}
    \address{School of Mathematics, Korea Institute for Advanced Study (KIAS), Seoul, 02455, Korea}
    \email{skim.math@gmail.com}
    \urladdr{https://www.kimsh.kr}

    \author[T. Koberda]{Thomas Koberda}
    \address{Department of Mathematics, University of Virginia, Charlottesville, VA 22904-4137, USA}
    \email{thomas.koberda@gmail.com}
    \urladdr{https://sites.google.com/view/koberdat}
    
    \author[J. de la Nuez Gonz\'alez]{J. de la Nuez Gonz\'alez}
    \address{School of Mathematics, Korea Institute for Advanced Study (KIAS), Seoul, 02455, Korea}
    \email{jnuezgonzalez@gmail.com}

    \begin{abstract}
    	For every compact, connected manifold $M$, we prove the existence of a sentence $\phi_M$ in the language of groups such that
the homeomorphism group of another compact manifold $N$ satisfies
 $\phi_M$ if and only if $N$ is homeomorphic to $M$. 
	We prove an analogous statement for groups
    	of homeomorphisms preserving Oxtoby--Ulam probability measures.
    \end{abstract}
    
    \maketitle
    
\setcounter{tocdepth}{1}
\tableofcontents
    

\section{Introduction}\label{sec:intro}
This article relates topological manifolds, homeomorphism groups, and first order theories. For us, a \emph{manifold} will mean a second countable, metrizable topological space, each point of which has a closed neighborhood homeomorphic to a fixed closed Euclidean ball. In particular, a manifold is allowed to have boundary.
The \emph{first order theory} (or \emph{elementary theory}) of a group is the collection of the first order sentences (i.e. sentences that do not involve quantification of subsets) which are valid in the group; see Section~\ref{ss:model theory} for details.

We begin by introducing the main objects of study.
For a manifold $M$ (possibly equipped with a probability
measure $\mu$), we let $\Homeo(M)$ and $\Homeo_\mu(M)$ denote the homeomorphism group of $M$ and its $\mu$--preserving subgroup, respectively. We denote by $\Homeo_0(M)$ and $\Homeo_{0,\mu}(M)$ the identity components of $\Homeo(M)$ and $\Homeo_\mu(M)$, respectively. 
For general topological spaces $X$ and $Y$, we write $X\cong Y$ if $X$ and $Y$ are
homeomorphic.

We denote by $\cM$
 the class of all pairs $(M,\cH)$,
where $M$ is a compact, connected manifold
and $\cH$ is a group satisfying
\[
\Homeo_0(M)\le \cH\le\Homeo(M).
\]
We also let $\cM_{\vol}$ denote
the class of all $(M,\cH)$
where $M$ is further assumed to be
equipped with some \emph{Oxtoby--Ulam measure} $\mu$ (that is, a nonatomic Borel probability measure having full support and assigning measure zero to the boundary),
and $\cH$ is a group satisfying
\[
\Homeo_{0,\mu}(M)\le \cH\le\Homeo_\mu(M).
\]
Note that in this case, we have
\[
\Homeo_{0,\mu}(M)=\Homeo_0(M)\cap\Homeo_\mu(M);
\]
cf.~\cite{Fathi1980}.

\begin{rem}\label{rem:vol}
In statements that apply to both of the classes $\cM$ and $\cM_{\vol}$, we will often use the notation $\cM_{(\vol)}$; 
in such a statement, the choices of formulae may differ, even when the formulae share the same names.
\end{rem}
We will later modify the definitions of the classes $\cM_{(\vol)}$ slightly so that only manifolds of dimension at least two are considered; see the remark at the end of Section~\ref{ss:top}.

To motivate the discussion in this article, we consider the general
\emph{reconstruction} problem of an object from its group of automorphisms.
For a general object $X$ in some category, it is natural to ask the degree to
which the group of automorphisms $\Aut(X)$ determines the object $X$. This question
is not completely precise, since the terms ``degree" and ``determine" do not have
a mathematical meaning here. In our context, the object $X$ will
always be a compact manifold, possibly with boundary, and the group of automorphisms
will be one of the groups of homeomorphisms we have defined already.

The precise meaning of ``degree" will be ``the information encoded in the first order
theory", and ``determine" will precisely mean ``reconstruct the homeomorphism type".
That is,
the goal of this paper is to investigate,
under the assumption that $(M,\cH)\in\cM_{(\vol)}$,
the extent to which the first order theory of $\cH$ can be used to reconstruct
the homeomorphism type of $M$.

Of course, the first order theory of the homeomorphism group of a manifold is not the
only data one can investigate for the reconstruction of the homeomorphism type of the
underlying manifold. Perhaps the most basic invariant of the group of homeomorphisms
of a manifold $M$ is its isomorphism type.

It is a classical result of Whittaker that the isomorphism type of
the homeomorphism group of a compact manifold determines the homeomorphism type of the
    underlying manifold in the following sense:
    
\begin{thm}[See~\cite{whittaker}]\label{thm:Whittaker}
Let $M$ and $N$ be compact manifolds, and suppose \[\phi\colon \Homeo(M)\longrightarrow\Homeo(N)\] is an isomorphism of groups.
Then there exists a homeomorphism \[\psi\colon M\longrightarrow N\] such that for all $f\in\Homeo(M)$, we have $\phi(f)=\psi\circ f\circ\psi^{-1}$.
    \end{thm}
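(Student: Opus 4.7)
The strategy is to recover $M$ from the abstract group $\Homeo(M)$ by attaching to each point $x\in M$ a canonical collection of subgroups definable in purely group-theoretic terms, and then verifying that any abstract isomorphism $\phi\colon \Homeo(M)\to\Homeo(N)$ must carry this family on the $M$-side to the analogous family on the $N$-side.

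For each open set $U\subseteq M$ let $G_U\le\Homeo(M)$ be the subgroup consisting of those homeomorphisms whose support lies in $U$. Two such subgroups $G_U$ and $G_V$ centralize each other whenever $\overline{U}\cap\overline{V}=\emptyset$, and for every $x\in M$ the family $\{G_U : x\in U\}$ is a filter base with respect to reverse inclusion. The first and decisive step is to characterize, purely group-theoretically inside $\Homeo(M)$, which subgroups arise as some $G_U$ (for example, by describing them via centralizers of collections of disjointly supported involutions, and by demanding appropriate maximality or non-commutativity conditions ruling out degenerate cases). Once this is accomplished, $\phi$ sends each $G_U$ to a subgroup of the form $G_{V}\le\Homeo(N)$ for a canonical open set $V=V_U\subseteq N$, and carries the filter at $x$ to a filter of open sets on the $N$-side.

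Having the dictionary in place, I would define $\psi\colon M\to N$ by declaring $\psi(x)$ to be the unique point in the intersection $\bigcap_{x\in U}\overline{V_U}$. That the intersection is a single point uses the fact that the local filter on the $N$-side also satisfies the filter axioms coming from our characterization, combined with Hausdorffness and compactness of $N$. Applying the same procedure to $\phi^{-1}$ produces an inverse map, so $\psi$ is a bijection; it is a homeomorphism because the filters are transported homeomorphically under $\phi$, and both $M,N$ are compact Hausdorff. Finally, to prove $\phi(f)=\psi f \psi^{-1}$, I would use the equivariance $fG_Uf^{-1}=G_{f(U)}$. Applying $\phi$ to this identity shows that $\phi(f)$ sends the filter at $\psi(x)$ to the filter at $\psi(f(x))$, which forces $\phi(f)(\psi(x))=\psi(f(x))$ for every $x\in M$.

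The principal obstacle is the first step, the purely algebraic identification of support subgroups. Naive attempts (maximal abelian subgroups, maximal proper subgroups, etc.) fail, and the argument relies on strong homogeneity properties of $\Homeo(M)$, in particular a high degree of transitivity on ordered tuples of interior and boundary points, and on the existence of a rich supply of homeomorphisms with prescribed small supports. Once this algebraic-to-geometric translation is secured, the remaining verifications (construction of $\psi$, its continuity, and the conjugation identity) are largely formal bookkeeping.
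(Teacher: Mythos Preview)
The paper does not prove this theorem. Theorem~\ref{thm:Whittaker} is stated with the citation ``See~\cite{whittaker}'' and is presented as a classical result due to Whittaker; no proof is given. There is therefore nothing in the paper's own argument to compare your proposal against.

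That said, your outline is a reasonable sketch of the standard reconstruction strategy underlying results of this type (Whittaker's original argument, and more generally Rubin's Theorem~\ref{thm:rubin-intro}, which the paper also cites without proof). You have correctly identified the crux: the purely group-theoretic recognition of support subgroups. In Rubin's formulation this is handled via the machinery of locally moving group actions and the first-order definability of the Boolean algebra of regular open sets from the group structure (what the paper later calls Rubin's Expressibility Theorem, Theorem~\ref{t:express}). Your filter-of-neighborhoods construction of $\psi$ is morally the same as Rubin's ultrafilter-of-regular-opens approach to recovering points.

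Note also that the paper remarks that its main theorem ``generalize[s] Whittaker's result without relying on it'': Theorem~\ref{thm:main} produces a single first-order sentence $\phi_M$ distinguishing $M$, so in particular $\Homeo(M)\cong\Homeo(N)$ forces $M\cong N$. This is a genuinely different route from the one you sketch, since it does not attempt to transport a specific isomorphism $\phi$ into a specific conjugating homeomorphism $\psi$, but rather shows that the elementary theory of $\Homeo(M)$ already pins down the homeomorphism type of $M$.
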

    
Whittaker's result has been generalized by a number of authors; see Chapter 3 of~\cite{KK2021book} for a survey. For instance, combining the work of Bochner--Mongomery~\cite{BM1945} on Hilbert's fifth problem and of Takens on smooth conjugation between diffeomorphisms~\cite{Takens1979} (cf.~\cite{filipkiewicz}), one obtains that if $M$ and $N$ are smooth and closed, and if the diffeomorphism groups $\Diff^k(M)$ and $\Diff^{\ell}(N)$
are isomorphic as groups, then $k=\ell$ and 
each isomorphism
between the groups is induced by 
some $C^k$--diffeomorphism between $M$ and $N$.

In the continuous category, a different generalization was given by Rubin.
We say that a topological action of a group $G$ on a topological space $X$ is \emph{locally dense} if for each pair $(x,U)$ of a point $x\in X$
and a neighborhood $U\sse X$ of $x$,
the orbit $Z$ of $x$ by the action of the group
\[
G[U]:=\{g\in G\mid g(y)=y\text{ for all }y\not\in U\}
\]
is somewhere dense; that is, the closure of $Z$ has nonempty interior.
Rubin's Theorem can be stated as followsL

\begin{thm}[\cite{Rubin1989}]\label{thm:rubin-intro}
Let $X_1$ and $X_2$ be perfect, locally compact, Hausdorff topological spaces, and let $G_i\leq \Homeo(X_i)$ be locally dense subgroups for $i\in\{1,2\}$. 
If there exists an isomorphism if groups \[\phi\colon G_1\longrightarrow G_2,\] then there exists a homeomorphism \[\psi\colon X_1\longrightarrow X_2\] such that for all $g\in G_1$, we have $\phi(g)=\psi\circ g\circ\psi^{-1}$. \end{thm}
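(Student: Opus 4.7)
The plan is to reconstruct the topological space $X_i$, together with its $G_i$-action, from purely group-theoretic data associated to $G_i$, in such a way that any abstract isomorphism $\phi$ automatically induces a homeomorphism. The bridge between algebra and topology will be the family of rigid stabilizers $G_i[U]$ as $U$ ranges over the open subsets of $X_i$; these form a lattice under inclusion that encodes the lattice of regular open sets of $X_i$ modulo a harmless equivalence.

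The first step is to single out, among all subgroups of $G_i$, those that are rigid stabilizers, using only the language of groups. A natural vehicle is the commutation calculus of $G_i$: two rigid stabilizers $G_i[U]$ and $G_i[V]$ commute when $U$ and $V$ are disjoint, since elements with disjoint supports commute, and the locally dense hypothesis together with perfectness of $X_i$ provides a partial converse. In this way one hopes to characterize disjointness and containment among rigid stabilizers group-theoretically, and thence to reconstruct the lattice itself. Local density is essential here: it prevents the orbits of $G_i[U]$ from being too small to distinguish open sets, and it ensures that nontrivial commutation failures always reflect genuine geometric overlap.

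With the lattice in hand, I would recover the points of $X_i$ as ultrafilters, or following Rubin's original approach as generic filters, of the reconstructed lattice of algebraic open sets. Local compactness together with Hausdorffness of $X_i$ guarantees that enough filters exist and that the natural topology on them coincides with the original topology on $X_i$, while the $G_i$-action on the filter space agrees with the original action by construction. Since $\phi$ preserves all group-theoretic data, it transports the lattice, and hence the space of filters, of $G_1$ to that of $G_2$, yielding a bijection $\psi\co X_1\to X_2$ which is manifestly a homeomorphism and which, by construction, satisfies $\phi(g)=\psi\circ g\circ\psi^{-1}$ pointwise.

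The main obstacle is the first step: finding a recipe in the pure language of groups that selects precisely the rigid stabilizers, or at least a sufficient sublattice of them, from among all subgroups of $G_i$. Naive commutation tests are too coarse, since subgroups which are not rigid stabilizers can satisfy the same commutation patterns, and one must exploit local density in a delicate iterative fashion to refine the class of candidate subgroups down to the honest rigid stabilizers. Once that lattice has been correctly identified, the passage to points and the verification that $\psi$ is a homeomorphism intertwining the two actions are essentially formal consequences of Stone-type duality applied to the reconstructed lattice.
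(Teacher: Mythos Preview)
The paper does not supply its own proof of this theorem: it is quoted as a result of Rubin with a citation to~\cite{Rubin1989}, and the authors rely on it (and on the closely related Expressibility Theorem, Theorem~\ref{t:express}) as black boxes. So there is no proof in the paper to compare your proposal against.

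That said, your outline is in the right spirit and broadly matches Rubin's actual strategy: one reconstructs the Boolean algebra of regular open sets group-theoretically, then recovers points as certain ultrafilters on that algebra (the paper alludes to this in the remark preceding Lemma~\ref{lem:rho3}, where Rubin's ``good ultrafilters'' are mentioned). One point of divergence worth noting: Rubin does not attempt to recognize the \emph{subgroups} $G_i[U]$ among all subgroups of $G_i$, which as you correctly anticipate is delicate. Instead he works with individual group elements and their extended supports, showing that the relation $\suppe g_1\subseteq\suppe g_2$ is definable by a first-order formula in the group language (this is precisely the content of Theorem~\ref{t:express}). The Boolean algebra is then built from single elements, sidestepping the need to isolate rigid stabilizers as subgroups. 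Your acknowledged ``main obstacle'' is thus real, and Rubin's route around it is to change the question rather than to answer it directly.

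As written, your proposal is an honest plan rather than a proof: the decisive technical step---producing a group-theoretic criterion that singles out either the rigid stabilizers or the containment relation among supports---is named but not carried out.
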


The reason for considering the (\emph{a priori} much weaker)
first order theory of a homeomorphism group instead
of the full isomorphism type of the homeomorphism group is because 
an isomorphism between two groups of homeomorphisms is a rather unwieldy
(and frankly unnatural) piece of data. Homeomorphism groups of manifolds are generally much too large to write down, and directly accessing homomorphisms between them is 
practically impossible. Therefore, we will be interested in more finitary ways of investigating homeomorphism groups of manifolds, namely through their elementary theories.

With this goal in mind,
we consider the language of groups, which consists of a binary operation (interpreted as the group operation) and a constant (interpreted as the identity element). Models of the theory of groups are just sets with interpretations of the group operation and identity element which satisfy the axioms of groups. We say that two groups $G_1$ and $G_2$ are \emph{elementarily equivalent}, written $G_1\equiv G_2$, if a first order sentence in the language of groups holds in $G_1$ if and only if it holds in $G_2$; this is sometimes expressed as saying that the theories of $G_1$ and $G_2$ agree, i.e. \[\Th(G_1)=\Th(G_2).\] Here, \emph{first order} refers to the scope of quantification, which is allowed to range over elements (as opposed to subsets, relations, or functions).

Philosophically, the
reason for considering first order theories as opposed to second (or higher) order
theories is that, whereas it is typically not controversial what ``elements" in a
structure refer to, the objects which are admitted as ``subsets" of a structure depend
on the underlying choice of set theory; there is generally no agreement on acceptable
axioms for set theory. A further ``constructive" benefit of the first order theory of a structure is that it is a \emph{syntactic}
invariant, in the sense that it records a list of ``true statements" about the structure
which can, in principle, be recorded.

\emph{First order rigidity} in a class of structures refers to the phenomenon
where two elementarily equivalent structures are automatically isomorphic. Of course,
a class of structures may or may not enjoy first order rigidity, and \emph{a priori}
elementary equivalence is a much coarser equivalence relation than isomorphism. Because
of general model-theoretic phenomena such as the upward L\"owenheim--Skolem Theorem
(which says roughly that once one has an infinite model of a theory then
one has elementarily equivalent models of arbitrarily high cardinality), one
should restrict one's attention to models of the same cardinality; even so,
for countable groups, it is not the case that elementary equivalence implies isomorphism.
A typical example is the class of nonabelian free groups, wherein any two such groups
are elementarily equivalent~\cite{KM2006,Sela10}.

The content of this paper
fits within a tradition of results establishing that certain classes of
structures do enjoy first order rigidity, such as lattices in higher rank~\cite{avni-etal}, function fields~\cite{duret1,duret2,vidaux,Pop02}, rings~\cite{leloup,greenfeld}, finite--by--abelian groups~\cite{oger}, and linear groups~\cite{plotkin}, cf.~\cite{tent-segal23}.
Moreover, the themes of this paper are compatible with the
philosophy that one should like to distinguish between objects that are difficult to access directly via finite syntactic proxies.

\subsection{Elementary equivalence implies homeomorphism}
    
Our main result says precisely that 
two compact, connected manifolds have elementarily equivalent homeomorphism groups
if and only if the underlying manifolds are homeomorphic to each other.
More strongly, for each compact connected manifold $M$ 
 we prove the existence of a group theoretic sentence that asserts ``I am homeomorphic to $M$'':

\begin{thm}\label{thm:main}
For each compact, connected manifold $M$,
there exists a sentence $\phi_M^{(\vol)}$ in the language of groups such that 
when $(N,H)\in\cM_{(\vol)}$, 
we have that  
\[
\phi_M^{(\vol)}\in\Th(H)\qquad\text{if and only if}\qquad N\cong M.\]
\end{thm}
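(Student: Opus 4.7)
The plan is to prove Theorem~\ref{thm:main} by producing, uniformly for all pairs $(N,H) \in \cM_{(\vol)}$, a first-order interpretation of the topological space $N$ inside the group $H$, and then using this interpretation to construct, for each fixed $M$, a sentence $\phi_M^{(\vol)}$ in the language of groups that asserts ``the interpreted space is homeomorphic to $M$.'' The backdrop is Rubin's theorem (Theorem~\ref{thm:rubin-intro}): since every $H$ in our class acts locally densely on $N$, the abstract group structure of $H$ already determines $N$ up to homeomorphism, and our task is to recast this reconstruction as a first-order phenomenon.

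For the interpretation itself, I would code points of $N$ by equivalence classes of tuples $(g_1,\ldots,g_k) \in H^k$ whose supports are nested and first-order-verifiably \emph{small}. Supports, fixed-point sets, and support-disjointness can all be expressed first-order through centralizers and commutation. Local density, fragmentation, and the flexibility of $\Homeo_0(M)$ and $\Homeo_{0,\mu}(M)$ (via results of Edwards--Kirby and Fathi~\cite{Fathi1980}) supply enough such codings within every $(N,H) \in \cM_{(\vol)}$. Topological relations of the interpreted space, such as ``a coded point lies in the interior of the support of $g$'' or ``two coded points lie in a common chart,'' would then be read off commutation patterns and conjugation. Because the interpretation is uniform, elementary equivalence of groups would promote automatically to an analogous statement about the reconstructed topological spaces.

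Granting the interpretation, the sentence $\phi_M^{(\vol)}$ for a specific $M$ would assert the existence of a finite combinatorial dataset pinning down $M$ up to homeomorphism: for example, a finite good cover of $M$ by Euclidean balls (with boundary collars where appropriate) together with a specified nerve and specified overlap transition data, each chart of which is realized by a tuple of elements of $H$ in a first-order-verifiable way. A completeness clause then asserts that these charts cover every coded point of the interpreted manifold. By Whittaker's theorem (Theorem~\ref{thm:Whittaker}) applied to the abstract reconstruction, any manifold admitting such a dataset must be homeomorphic to $M$, yielding one direction of the equivalence; the converse is immediate since $M$ itself realizes its own chart system.

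The principal obstacle will be making the interpretation genuinely first-order rather than infinitary. Rubin's reconstruction implicitly uses filters of open sets, whereas each formula we write may only refer to finitely many elements of $H$. Surmounting this requires a dimension-sensitive coordinate-patch coding in which the local Euclidean structure of $N$ is exploited to trade infinite filter data for bounded-arity tuples witnessing the local model $\mathbb{R}^d$ (or $\mathbb{R}^d_+$ near the boundary), most likely proceeding by induction on $d$. In the $\cM_{\vol}$ setting the further subtlety is that all witnessing tuples must lie in the chosen measure-preserving subgroup, which should be handled by working with measure-preserving coordinate codings and invoking the Oxtoby--Ulam theorem to secure the necessary local transitivity of $\Homeo_{0,\mu}(M)$.
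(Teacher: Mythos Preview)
Your outline gestures at the right architecture---interpret the space uniformly in the group, then write a sentence certifying homeomorphism with $M$---but it misses the load-bearing step and misidentifies the finishing tool. The paper does \emph{not} pin down $M$ by ``nerve plus transition data''; a nerve alone records only homotopy type, and actual transition maps are continuous functions $\bR^n\to\bR^n$, i.e.~uncountable objects that cannot be quantified over without further work. The paper's solution is to first interpret \emph{second order arithmetic}, hence the real numbers and the function spaces $C(\bR^k,\bR^\ell)$, inside $G$ (Sections~\ref{sec:arith}--\ref{sec:points}). Only with these sorts in hand can one speak first-order about parametrized balls (via definable ``flows'', Theorem~\ref{thm:output-balls}), collar neighborhoods (Theorem~\ref{thm:collar}), and embeddings of $N$ into $\bR^{2n+1}$. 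The sentence $\phi_M$ then asserts the existence of a parametrized cover of $N$ lying within a Cheeger--Kister number $\delta$ of a fixed rational approximation to a reference cover of $M$ (Lemma~\ref{lem:ck}); this rigidity result, not Whittaker, is what forces $N\cong M$.

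Your appeal to Whittaker is misplaced: Whittaker compares two homeomorphism groups known to be isomorphic, whereas here you have only a sentence satisfied by $H$ and must extract $N\cong M$ directly. Likewise, ``tuples with first-order-verifiably small support'' is not yet a mechanism; the paper encodes a point as a triple $(U,V,g)$ where $g$ certifies a sequential ordering of the components of $U$ and a further formula forces all cofinal subsequences to accumulate at a single point (Lemma~\ref{lem:rho3}). Absent the arithmetic interpretation and the Cheeger--Kister endgame, your proposal remains a wish rather than a proof.
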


In other words, the theories of homeomorphism groups of manifolds
are \emph{quasi-finitely axiomatizable} within the class
$\cM_{(\vol)}$, a property that is stronger than first order rigidity.

In Theorem~\ref{thm:main}, we emphasize that $M$ and $N$ are not assumed to have any further structure, such as a smooth or piecewise-linear structure. 
We thus generalize Whittaker's result without relying on it, and produce for each manifold a finite, group--theoretic sentence that certifies homeomorphism or non--homeomorphism with the manifold. 
The sentences $\phi_M$ and $\phi_M^{\vol}$ are produced explicitly insofar as is possible, though in practice it would be a rather tedious task to record them.
We also note that the connectedness hypothesis for $N$ can also be dropped from the theorem, thus justifying the claim in the abstract;
see Corollary~\ref{cor:basic-form}, for instance.

A further motivation for Theorem~\ref{thm:main} that does not arise from philosophical
or foundational considerations
centers around the following dynamical question; a number of other related questions are enumerated in Section~\ref{sec:questions}.
    
    \begin{que}\label{que:homeo-action}
    Let $M$ be a compact, connected manifold. Under what conditions is there a finitely generated (or countable) group $G_M\leq \Homeo(M)$ such that whenever $N$ is a compact manifold with $\dim M=\dim N$ on which $G_M$ acts faithfully with a dense orbit, we have $M\cong N$?
    \end{que}
    
    Related results for actions of the full homeomorphism group of $M$ are given by Chen--Mann~\cite{chen-mann}. They show that
    if the identity component of $\Homeo(M)$ acts transitively on a connected manifold or CW--complex $N$, then $N$ is homeomorphic
    to a cover of a configuration space of points of $M$. In our context, we have the following immediate consequence of the downward
    L\"owenheim--Skolem Theorem:
    
    \begin{cor}\label{cor:L-S}
To each compact connected manifold $M$ one can associate a countable group $G_M\leq \Homeo(M)$ which is elementarily equivalent to $\Homeo(M)$, such that
for two compact, connected manifolds $M$ and $N$ we have
\[G_M\equiv G_N\quad\textrm{if and only if}\quad M\cong N.\] 
In particular $G_M\cong G_N$ if and only if $M\cong N$.
    \end{cor}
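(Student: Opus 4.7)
The plan is to combine Theorem~\ref{thm:main} with the downward Löwenheim–Skolem theorem. Since the language of groups is countable and each $\Homeo(M)$ is infinite, Löwenheim–Skolem produces, for each compact connected manifold $M$, a countable elementary subgroup of $\Homeo(M)$. In order to make the ``in particular'' clause yield an actual isomorphism of abstract groups (rather than merely elementary equivalence), I would choose a representative $M_{0}$ from each homeomorphism class of compact connected manifolds, apply Löwenheim–Skolem to $\Homeo(M_{0})$ to obtain a countable elementary subgroup $G_{M_{0}} \preceq \Homeo(M_{0})$, then for each $M$ homeomorphic to $M_{0}$ fix some homeomorphism $\psi_{M} \colon M_{0} \to M$ and set $G_{M} := \psi_{M}\, G_{M_{0}}\, \psi_{M}^{-1} \leq \Homeo(M)$. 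The induced group isomorphism $\Homeo(M_{0}) \to \Homeo(M)$ carries $G_{M_{0}}$ onto $G_{M}$, so $G_{M}$ is again a countable elementary subgroup of $\Homeo(M)$.

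With these choices in place, the equivalence $G_{M} \equiv G_{N} \iff M \cong N$ is quick. If $M \cong N$, the two manifolds share a common representative $M_{0}$ and by construction $G_{M} \cong G_{M_{0}} \cong G_{N}$ as abstract groups, so \emph{a fortiori} $G_{M} \equiv G_{N}$. Conversely, if $G_{M} \equiv G_{N}$, then elementarity of each inclusion gives
\[
\Th(\Homeo(M)) \;=\; \Th(G_{M}) \;=\; \Th(G_{N}) \;=\; \Th(\Homeo(N)),
\]
so the sentence $\phi_{M}$ produced by Theorem~\ref{thm:main} lies in $\Th(\Homeo(N))$; since $(N, \Homeo(N)) \in \cM$, that theorem forces $N \cong M$. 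The ``in particular'' clause then drops out: $G_{M} \cong G_{N}$ implies $G_{M} \equiv G_{N}$ and hence $M \cong N$, and the converse is built into the construction of the $G_{M}$.

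There is no serious obstacle beyond Theorem~\ref{thm:main} itself; Löwenheim–Skolem and the transfer of elementary substructure through a group isomorphism are classical. The single point of care is the coherent choice of the $G_{M}$ within each homeomorphism class, which is what upgrades elementary equivalence to an actual isomorphism of abstract groups. If one were content with the elementary equivalence statement alone, an independent application of Löwenheim–Skolem to each $\Homeo(M)$ would suffice, and the coherent transport step could be omitted.
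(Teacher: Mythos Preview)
Your proposal is correct and follows exactly the approach the paper indicates: downward L\"owenheim--Skolem combined with Theorem~\ref{thm:main}. The paper does not spell out a proof, so your added care in coherently choosing the $G_M$ within each homeomorphism class---to secure the isomorphism $G_M \cong G_N$ in the ``in particular'' clause rather than mere elementary equivalence---is a detail the paper leaves implicit.
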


    \begin{rem}
    	In~\cite{Rubin1989}, there is a cryptic announcement of a version of Theorem~\ref{thm:main}. In particular, Rubin claims that under
    	the assumption $V=L$ (i.e.~G\"odel constructibility) that two arbitrary connected manifolds are homeomorphic if and only if their
    	homeomorphism groups are elementarily equivalent; 
     it is likely that
     he implicitly made a few other assumptions (e.g.~excluding manifolds
     with boundary) to avoid trivial counterexamples such as $\Homeo(0,1)\cong\Homeo[0,1]$. To the knowledge of the authors, the paper
    	bearing the title announced in~\cite{Rubin1989} never appeared, and neither did any result
    	(of any authors whatsoever) proving first order rigidity of homeomorphism
    	groups of manifolds; cf.~a related MathOverflow post~\cite{MO-Rubin}.
	We note that we only establish results for compact manifolds, in contrast to Rubin's original announcement.
    
    Rubin's original reason for considering the assumption $V=L$ remains unclear,
    and perhaps the goal was to
     promote first order equivalence to second order
     equivalence, using the assumption $V=L$ to conclude the resulting second
     order equivalent structures are isomorphic;~cf.~\cite{ajtai79}. In work that is
     ongoing at the time of this writing, the second and third author, together with
     J.~Hanson and C.~Rosendal have established that first order rigidity for homeomorphism
     groups of noncompact manifolds depends on the choice of set theory used.
    \end{rem}

Our proof of Theorem~\ref{thm:main} largely consists of two parts. 
The first part is constructing an expansion of the language of group theory to a seemingly more powerful language, called $L_{\agape}$. 
The universe of an $L_{\agape}$ structure corresponding to $(M,G)\in\cM_{(\vol)}$ will contain the group $G$, the regular open sets $\ro(M)$ of $M$, the real numbers $\bR$, the set of continuous maps $C^0(\bR^k,\bR^\ell)$ for \[k,\ell\in\omega=\{0,1,2,\ldots\}\]
and the discrete subsets of $M$.
Since the expansion is specified by first order definitions, we deduce the following, which roughly means that each sentence in the theory of $\agape(M,\cH)$ can be interpreted (in a way that is uniform in $(M,\cH)$) as a sentence in the theory of the group $\cH$; see Section~\ref{sec:prelim} for a precise definition of \emph{uniform interpretation}:

\begin{thm}\label{thm:interpret}
For $(M,G)\in \cM_{(\vol)}$,
the structure $\agape(M,G)$ is uniformly interpretable in the group structure $\cH$.
\end{thm}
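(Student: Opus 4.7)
The plan is to interpret the various sorts of the $\agape$-structure inside the group $\cH$ one at a time, each time using only first-order formulas whose shape does not depend on the particular pair $(M,\cH)\in\cM_{(\vol)}$. The starting point, and the engine driving the whole construction, is the interpretation of the Boolean algebra $\ro(\inte M)$ of regular open subsets of $\inte M$. Because $\cH$ contains $\Homeo_0(M)$ (resp.~$\Homeo_{0,\mu}(M)$), every regular open $U\sse\inte M$ equals $\supp(g)$ for some $g\in\cH$, and Rubin-style arguments show that having the same regular open support is definable in $\cH$: two elements $g,h$ satisfy $\supp(g)=\supp(h)$ iff they have the same commutation pattern with the family $\cH[U]$ as $U$ varies, something one can express purely in terms of commutators and centralizers of conjugates. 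Quotienting $\cH$ by this definable equivalence then yields $\ro$, with the lattice operations (containment, intersection, complement) translated into sentences about supports; the natural action of $\cH$ on $\ro$ is just conjugation, hence automatically definable.

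Once $\ro$ is in hand, I would single out the definable subfamily of regular open sets that behave like open balls, and, using only definable properties in $\ro$, recognize configurations of regular open sets that code a fixed chart structure. In particular, the set of isolated points of $\inte M$ is captured as those singletons in $\ro$ (or rather, codes for discrete subsets of $\ro$ whose elements can be shrunk arbitrarily without meeting each other), giving the desired sort. Support-based formulas let us go between subsets of $\inte M$ and subsets of $\ro$, so the full ``sets of isolated points'' sort follows formally.

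The main obstacle, as I see it, is the uniform interpretation of $\bR$ together with its field structure. The idea is to fix once and for all parameters in $\cH$ that pin down a single open ball $B\sse\inte M$ together with a canonical identification of its interior with Euclidean space: one chooses a commuting configuration of homeomorphisms (for example, a pair of generic elements together with a tuple witnessing a one-parameter scaling toward a marked interior point) and uses these parameters to coordinatize $B$ via shrinking chains of regular open subsets. The reals then appear as a definable quotient of a space of such chains, modulo a definable equivalence relation capturing coincidence in the limit. Addition and multiplication are recovered from the composition and commutation structure of the chosen flowing subgroup. The uniformity requirement forces the formulas to work for every $(M,\cH)$: the parameters are not fixed in advance, but we include them as existential witnesses and quotient by a definable equivalence relating different choices of such witnesses, so that the resulting interpreted field is canonical.

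Finally, given $\bR$ as an interpreted sort, the sorts $C^0(\bR^k,\bR^\ell)$ are interpreted by coding a continuous function as its graph, which is a subset of $\bR^{k+\ell}$ definable from suitable parameters in $\cH$; being a functional graph, and being a graph of a continuous map, are both first-order expressible from the reconstructed field and order. Putting all of these interpretations together, and verifying that no step depended on the particular manifold or group beyond the choice of (explicitly quantified) parameters, yields the uniform interpretation of $\agape(M,\cH)$ inside $\cH$ asserted by the theorem.
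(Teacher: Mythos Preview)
Your first step---interpreting $\ro(M)$ in $\cH$ via extended supports and Rubin's expressibility theorem---is exactly what the paper does. After that, however, the proposal has a genuine gap.

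The missing ingredient is a uniform interpretation of \emph{second-order arithmetic} $(\bN,\PP(\bN),+,\times,\in)$ directly from the $(\bG,\ro)$-structure, \emph{before} any attempt to produce $\bR$ or to coordinatize balls. The paper obtains $\bN$ and $\PP(\bN)$ via a formula $\on{seq}(u,v,\gamma)$, purely in $L_{\bG,\ro}$, expressing that the (infinitely many) connected components of $U$ are linearly ordered by a single homeomorphism $g$: roughly, $g$ carries a marker $V\cap\hat U_i$ into the next component $\hat U_{i+1}$. A natural number is then a component of such a $U$; a subset of $\bN$ is a union of components; and $\bZ$, $\bQ$, $\bR$ (via Dedekind cuts in $\PP(\bN)$) follow by the standard route. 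Only \emph{after} $\bR$ is available does the paper define flows $\Phi_{U,g}^r$ and parametrize balls, using approximation of $g^r$ by integer powers $g^s$ with $\on{ang}(s,r)$ small.

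Your proposed route to $\bR$---``shrinking chains of regular open subsets'' converging to a point, and a ``flowing subgroup'' whose composition structure recovers addition and multiplication---runs into circularity here. A chain is an infinite sequence, and first-order logic cannot quantify over infinite sequences of elements of $\ro$ unless some encoding of $\PP(\bN)$ is already in place; likewise, asserting that integer powers $g^s$ converge (so that $g$ generates a flow and one can read off a real parameter) already presupposes quantification over $\bN$ together with a notion of ``arbitrarily large $s$'' or ``arbitrarily small error''. Without the $\on{seq}$-style bootstrap of arithmetic, none of these limiting statements is first-order expressible. A smaller point in the same vein: coding $f\in C^0(\bR^k,\bR^\ell)$ by ``its graph, a subset of $\bR^{k+\ell}$'' is not available, since arbitrary subsets of $\bR^{k+\ell}$ are not a sort; the paper instead observes that $f$ is determined by $f\!\restriction_{\bQ^k}$, hence by an element of $\PP(\bN)$.
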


The second part of the proof consists in showing that the $\agape$ language can express the sentence that ``I am homeomorphic to $M$'':

\begin{thm}\label{thm:iamM}
For each $(M,G)\in\cM_{(\vol)}$,
there exists an $L_{\agape}$--sentence $\psi^{(\vol)}_{M,G}$ 
such that for all $(N,H)\in\cM_{(\vol)}$,
we have  
\[\psi^{(\vol)}_{M,G}\in\Th\agape(N,H)\quad
\text{if and only if}\quad N\cong M.\]
\end{thm}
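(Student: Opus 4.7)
The strategy is to capture the homeomorphism type of $M$ by a specific finite topological atlas and to express the existence of an isomorphic atlas in $N$ as an $L_{\agape}$--sentence. The key advantage over the bare group language is that $L_{\agape}$ already contains the regular--open--set lattice $\ro$, the real numbers, and all continuous maps between Euclidean spaces, so such atlas data is first--order accessible; by Theorem~\ref{thm:interpret} we may freely use these sorts throughout.

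First I would fix a finite cover of $M$ by regular open sets $U_1,\dots,U_k$, each equipped with a homeomorphism $\phi_i\colon U_i\to B_i$ onto a concrete open ball $B_i\subset\bR^n$ (where $n=\dim M$), and record the transition maps $\tau_{ij}=\phi_j\circ\phi_i^{-1}$ as distinguished elements of the sort $C^0(\bR^n,\bR^n)$ together with the incidence pattern of the cover. The sentence $\psi_{M,G}^{(\vol)}$ would then be an existential $L_{\agape}$--statement asserting that there exist regular open sets $V_1,\dots,V_k\in\ro$ whose join is the top, whose pairwise incidence pattern matches that of the $U_i$, and which carry ``chart data'' identifying each $V_i$ with the prescribed $B_i$ in a way that these identifications glue via the named transitions $\tau_{ij}$ on overlaps.

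The main hurdle is encoding the clause ``$V_i$ is homeomorphic to $B_i$ via a chart compatible with the named transitions'', since first--order logic cannot directly quantify over continuous maps from $V_i$ into $\bR^n$. I would handle this by combining the three nontrivial sorts of $\agape$: (a) the rigid stabilizer $H[V_i]\le H$, whose internal structure by Rubin--style reconstruction pins down the topological type of $V_i$; (b) a dense discrete subset of $V_i$ drawn from the isolated--points sort and matched to the rational points of $B_i\subset\bR^n$ via a bijection whose convergence behavior is axiomatized through the $H[V_i]$--action; and (c) distinguished elements of $C^0(\bR^n,\bR^\ell)$ that force chart--level compatibility of the transitions on overlaps. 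Given such witnesses, the classical construction of a manifold from an atlas would yield a homeomorphism $N\cong M$; conversely, if $N\cong M$, the required witnesses are obtained by transporting the chosen atlas of $M$ through any homeomorphism.
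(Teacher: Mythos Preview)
Your high--level idea (record a finite atlas for $M$ and assert that $N$ carries an isomorphic one) is reasonable, but the proposal has two genuine gaps that the paper's argument is specifically designed to avoid.

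\textbf{Gap 1: the chart maps themselves.} You correctly identify that the hard part is expressing ``$V_i$ is homeomorphic to $B_i$ via a specific map'', but your proposed fix does not work. Item~(a) appeals to Rubin reconstruction, which is a statement about \emph{isomorphism} of rigid stabilizers; you cannot express ``$H[V_i]$ has a prescribed isomorphism type'' by a first--order sentence. Item~(b) asks for a bijection from a dense discrete set $D\subset V_i$ to $\bQ^n\cap B_i$ whose ``convergence behavior'' is controlled by the $H[V_i]$--action, but no such bijection lives in any sort of $\agape$: the discrete--set sort only gives you the set $D$, not a function on it, and there is no definable metric on $V_i$ against which to phrase Cauchy conditions. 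The paper solves this problem in a completely different way: a chart is \emph{manufactured out of group elements}. Theorem~\ref{thm:output-balls} shows that an $n$--tuple $\underline g\in G^n$ generating commuting circular flows produces a definable homeomorphism $\Psi[U,\underline g,p]\colon I^n\to\cl U$ via $(r_1,\dots,r_n)\mapsto g_1^{r_1}\cdots g_n^{r_n}(p)$, where the real exponents are themselves defined as uniform limits of integer powers. So the chart is not quantified over as a mysterious object; it is a definable function of finitely many group parameters. Theorem~\ref{thm:collar} does the analogous (and harder) job near $\partial M$, which your outline does not address at all.

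\textbf{Gap 2: naming the transition data.} You want to ``record the transition maps $\tau_{ij}$ as distinguished elements of $C^0(\bR^n,\bR^n)$'', but a parameter--free sentence cannot name a specific continuous map unless that map is definable in second--order arithmetic, and the transition maps of a general topological manifold need not be. The paper sidesteps this via the Cheeger--Kister stability lemma (Lemma~\ref{lem:ck}): one fixes a parametrized cover $\underline f\in\EE(n,k,\ell)$ of $M\subset\bR^{2n+1}$ and a rational Cheeger--Kister number $\delta$, then asserts only that $N$ admits a parametrized cover $\underline g$ with $\|\underline g-\underline f\|<\delta$. The inequality is expressed using finitely many rational sample values $q_i\approx\underline f(x_i)$ on a rational partition of $Q^n(2)$, so the sentence involves only finitely many rational constants. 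Exact agreement of atlases is never asserted; approximate agreement suffices, and Cheeger--Kister converts that into a homeomorphism $N\cong M$.

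In short, your sketch underestimates the difficulty of both expressing a single chart and of pinning down the gluing data without parameters; the paper's proof hinges on the flow--based ball parametrization (Section~\ref{sec:balls}), the collar parametrization (Section~\ref{sec:collar}), and the Cheeger--Kister rigidity, none of which appear in your outline.
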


By Theorem~\ref{thm:interpret},
we can interpret $L_{\agape}$--sentences \[\psi_{M,\Homeo(M)}\quad \textrm{and} 
\quad \psi_{M,\Homeo_\mu(M)}\] as group theoretic sentences $\phi_M$ and $\phi_M^{\vol}$ respectively, which distinguish $M$ from all the other non-homeomorphic manifolds $N$;
see Lemma~\ref{lem:inter-models} for a more formal explanation. We thus obtain a proof of Theorem~\ref{thm:main}.
    
\begin{rem}\label{rem:other-results}
    A few of the first order rigidity results obtained in this paper can be obtained for a substantially larger class of groups of homeomorphisms which are much
    smaller than the full group of homeomorphisms of $M$; see the recent paper~\cite{KdlN2024}. For certain groups
    of homeomorphisms that are ``sufficiently dense" in the full group of homeomorphisms
    (called \emph{locally approximating groups}),
    one can prove that the first order theory of these groups determines the underlying
    manifold up to homotopy equivalence. The first order theory of 
    locally approximating groups of homeomorphisms is substantially weaker than the
    theory of the full homeomorphism group; indeed, in~\cite{KdlN2024} we
    can only recover the main theorem of this paper for closed, triangulated
    manifolds for which the Borel Conjecture holds (i.e.~homotopy equivalence implies homeomorphism, which is false in general); among the main technical difficulties of this paper are constructing methods which work for all manifolds (including ones admitting no triangulation), and including manifolds with boundary. We are able to prove the main result because of the remarkable expressivity of the first order theory of the full group of homeomorphisms of a manifold.
    
    We note finally that the first order theory of the full homeomorphism group of a manifold $M$
    is expressive enough to interpret the full second order theory of countable
    subsets of $\Homeo(M)$, something which is not
    possible in a general locally approximating group of homeomorphisms; see the recent paper~\cite{KdlN2023}.
\end{rem}

  \subsection{Outline of the paper}   
    The paper is devoted to proving Theorem~\ref{thm:main} in several steps, each of which builds on the previous.
    Section~\ref{sec:prelim} gathers basic results from geometric topology and model theory, and fixes notation. In Section~\ref{sec:sorts}, we introduce the language and the structure of primary interest for us, called $\agape$. 
    In this structure, we interpret the regular open sets in $G$, and construct formulae that encode various topological properties of regular open sets.
    Section~\ref{sec:arith} interprets second order arithmetic using regular open sets
    and actions of homeomorphisms on them. Section~\ref{sec:points} encodes individual points of a manifold, together with the exponentiation map. Section~\ref{sec:balls} interprets the dimension
    of a manifold, as well as certain definably parametrized embedded Euclidean balls.
    Section~\ref{sec:collar} definably parametrizes collar neighborhoods of the boundary of a compact manifold. Section~\ref{sec:homeo} proves Theorem~\ref{thm:main} by interpreting a result of Cheeger--Kister~\cite{CK1970} and by encoding embeddings of manifolds into Euclidean spaces that are
    ``sufficiently near'' to a fixed embedding. 
    We conclude with some questions in Section~\ref{sec:questions}.
  
\section{Preliminaries}\label{sec:prelim}
In this section, we gather some notation, background, and generalities.

\subsection{Transitivity of balls in manifolds}
The high degree of transitivity of the action of homeomorphism groups
on balls in manifolds is crucial for this paper. We begin with the
following fundamental fact about Oxtoby--Ulam measures.

\begin{thm}[von Neumann~\cite{vonNeumann1961},
Oxtoby--Ulam~\cite{oxtoby-ulam}]\label{thm:ou}
If $\mu$ and $\nu$ are Oxtoby--Ulam measures on a compact connected manifold $M$, then there exists a homeomorphism $h$ of $M$ isotopic to the identity and 
fixing $\partial M$ such that $h_*\mu=\nu$.
\end{thm}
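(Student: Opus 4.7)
The plan is to first establish the result for the model case $M = [0,1]^n$ and then bootstrap to a general compact connected manifold by a covering and patching argument.

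For the model case, I would construct $h$ on $[0,1]^n$ as a uniform limit of homeomorphisms isotopic to the identity rel the boundary. The key intermediate step is a dyadic approximation lemma: for each $k\geq 1$ there exists a homeomorphism $h_k$ of $[0,1]^n$, isotopic to the identity rel $\partial[0,1]^n$, such that $(h_k)_*\mu$ and $\nu$ assign equal mass to every dyadic subcube of side length $2^{-k}$. To build $h_k$, I would proceed coordinate by coordinate: along the direction $e_j$, slide the $2^{k}-1$ interior dyadic hyperplanes to nearby hyperplanes so that the resulting slabs have the correct $\mu$-mass. This sliding is possible precisely because $\mu$ is nonatomic, has full support, and gives zero mass to $\partial [0,1]^n$; these three properties together guarantee that the relevant one-dimensional marginal distribution function is a strictly increasing homeomorphism of $[0,1]$, so the desired perturbation of grid positions exists and depends continuously on the transverse coordinates. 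One then iterates inside each slab to adjust subsequent coordinates, and arranges $h_{k+1}$ to only act within the level-$k$ subcubes, making the approximations compatible at successive scales.

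The compositions $g_k := h_k \circ \cdots \circ h_1$ together with their inverses form a Cauchy sequence in the uniform topology: the displacement introduced at stage $k$ is bounded by the diameter of a level-$k$ dyadic cube, which decays geometrically, so telescoping gives uniform convergence of both $g_k$ and $g_k^{-1}$. The limit $h := \lim g_k$ is then a homeomorphism of $[0,1]^n$ fixing $\partial[0,1]^n$, and since dyadic cubes generate the Borel $\sigma$-algebra and $h_*\mu$ and $\nu$ agree on every dyadic cube, they agree everywhere. The isotopy to the identity rel boundary comes for free, since every homeomorphism of $[0,1]^n$ fixing the boundary is isotopic to the identity via the Alexander trick.

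For general $(M,\mu,\nu)$, the plan is to choose a finite cover of $M$ by closed topological balls $B_1,\ldots,B_N$, together with a collared neighborhood of $\partial M$. Using that both $\mu$ and $\nu$ are nonatomic, fully supported, and vanish on $\partial M$, one first constructs an ambient homeomorphism $f$ isotopic to the identity so that $f_*\mu$ and $\nu$ assign equal mass to each piece of the decomposition; this is a finite-dimensional rebalancing obtained by slightly perturbing the boundaries between pieces, analogous to the hyperplane sliding in the cube case. On each piece, the cube version of the theorem then produces a homeomorphism supported in the interior of that piece and isotopic to the identity within it, carrying $f_*\mu$ to $\nu$ there. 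Composing $f$ with these local corrections yields the desired $h$, and concatenating the supporting isotopies gives the global isotopy to the identity rel $\partial M$. The main technical obstacle I expect is the coordinate-by-coordinate sliding step in the cube case: verifying that the perturbed grid lines depend continuously on the transverse coordinates and assemble to a genuine homeomorphism (not merely a measure-preserving continuous surjection) requires a careful, but standard, analysis of the relevant conditional distribution functions.
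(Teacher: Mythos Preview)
The paper does not give a proof of this theorem; it is stated with attribution to von Neumann and Oxtoby--Ulam and used as a black box throughout. So there is nothing in the paper to compare your argument against.

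Your sketch for the cube case is essentially the classical Oxtoby--Ulam dyadic rebalancing argument and is sound in outline. One point of imprecision: you describe sliding the interior dyadic hyperplanes ``to nearby hyperplanes'' but then say the new positions ``depend continuously on the transverse coordinates.'' These are incompatible as stated --- if the new interfaces are genuinely flat hyperplanes then there is no transverse dependence, and you will only equalize total slab masses, not masses of subcubes. What you actually need (and what the classical proof does) is to replace each dyadic hyperplane by a graph over the transverse coordinates, chosen so that the conditional distribution functions match; this is where the continuity of the one-dimensional marginals, guaranteed by nonatomicity and full support, is really used.

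For the passage to a general compact connected topological manifold, your plan to cover by finitely many closed balls and then rebalance the pieces is delicate: you need the pieces to be balls with disjoint interiors (a partition, not a cover), and the existence of such a decomposition for an arbitrary topological manifold is not obvious --- there is no triangulation available in general. The route the paper implicitly relies on (see Lemma~\ref{lem:brown-measure} just below the statement) is Brown's theorem: there is a continuous surjection $B^n(1)\to M$ restricting to an embedding on the open ball with dense image and mapping $\partial B^n(1)$ to a set of measure zero. This reduces the general case to a single application of the cube case rather than a patching argument, and avoids the decomposition issue entirely. If you want to salvage the covering approach, you would need to argue more carefully that the rebalancing step (equalizing masses across overlapping charts by an ambient isotopy) can be carried out inductively, chart by chart, which is possible but is really a different proof.
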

    
Thus, for Oxtoby--Ulam measures, the groups of measure-preserving homeomorphisms of $M$ are all conjugate to each other. In particular, each $(M,G)\in\cM_{\vol}$ corresponds to a measure that is unique up to topological conjugacy.
We will therefore refer to groups of \emph{measure-preserving homeomorphisms} without specifying a particular Oxtoby--Ulam measure.  
We refer the reader to~\cite{Fathi1980,Banyaga1997} for generalities about measure-preserving homeomorphisms.

A group theoretic interpretation of (certain) balls in a manifold will be another crucial step in this paper. The importance of being able to identify regular open sets which are homeomorphic to balls comes from the following lemma, which is originally due to Brown~\cite{Brown1962b} for the first two parts, and to Fathi~\cite{Fathi1980} for the remainder. 
One may view this as a natural generalization of the fact that every compact connected 2-manifold can be obtained by gluing up the boundary of a polygon in a suitable way.
See also~\cite{CV1977,LeRoux2014} for more details.
In the statement of the lemma, $B^n(r)$ means the compact ball of radius $r>0$ with the center at the origin in $\bR^n$.

\begin{lem}[\cite{Brown1962b,Fathi1980}; cf.~{\cite[Chapter 17]{CV1977}}]\label{lem:brown-measure}
For each compact connected $n$--manifold $M$, there exists a continuous surjection \[f\colon B^n(1)
\longrightarrow M\] such that the following hold:
\begin{enumerate}[(i)]
\item
the restriction of $f$ on  $\inte B^n(1)$ is an embedding onto an open dense subset of $M$;
\item
we have $f(\inte B^n(1))\cap f(\partial B^n(1))=\varnothing$,
and in particular, $\partial M\sse  f(\partial B^n(1))$;
\end{enumerate}
If
 $M$ is equipped with an Oxtoby--Ulam measure $\mu$, we can further require the following:
\begin{enumerate}[(i)]
\setcounter{enumi}{2}
\item
we have $\mu(f(\partial B^n(1)))=0$;
\item\label{p:brown-leb}
the measure $\mu$ is the pushforward of Lebesgue measure by $f$.
\end{enumerate}
\end{lem}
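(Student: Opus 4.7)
The plan is to split the lemma into its Brown part, namely (i) and (ii), and its Fathi refinement, namely (iii) and (iv). I would first construct the topological map by absorbing one chart at a time, then conjugate it on the source disk by an Oxtoby--Ulam homeomorphism to obtain the measure-theoretic conclusions.

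For (i)--(ii), I would cover $M$ by finitely many open charts $U_1, \ldots, U_k$, each homeomorphic to an open ball or open half-ball (the latter where the chart meets $\partial M$). Starting from an embedded open disk in $U_1$, I inductively produce continuous maps $f_j \colon B^n(1) \to M_j := U_1 \cup \cdots \cup U_j$ with $f_j|_{\inte B^n(1)}$ a homeomorphism onto a dense open subset of $M_j$ and $f_j(\inte B^n(1)) \cap f_j(\partial B^n(1)) = \varnothing$. The inductive step takes a flat $(n-1)$-disk $D \subset \partial B^n(1)$ mapped into $M_j \cap U_{j+1}$ and modifies $f_j$ on a neighborhood of $D$ so the image expands into $U_{j+1}$; the key geometric input is that the union of $\bar B^n$ with an external $n$-ball glued along a flat $(n-1)$-disk is again homeomorphic to $\bar B^n$. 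After $k$ steps one obtains $f_0 := f_k$, and by construction $f_0(\partial B^n(1))$ is the union of $\partial M$ with finitely many bi-collared $(n-1)$-disks arising from the successive gluings.

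For (iii), this structural description of $f_0(\partial B^n(1))$ as a finite union of bi-collared compact codimension-one sets is decisive. Any bi-collared compact $(n-1)$-set $S \subset M$ has $\mu$-measure zero for any Oxtoby--Ulam $\mu$, since a bi-collar $S \times (-1,1) \hookrightarrow M$ exhibits an uncountable pairwise-disjoint family of slices $S \times \{t\}$ of total measure at most $1$, so almost every slice has measure zero, and a limit argument using the nonatomicity and outer regularity of $\mu$ propagates this to $S = S \times \{0\}$. Combined with $\mu(\partial M) = 0$, which is part of the definition of an Oxtoby--Ulam measure, this yields (iii). The principal technical obstacle of the proof lies here: in the purely topological category one cannot invoke transversality, and an arbitrary closed nowhere-dense set may have positive $\mu$-measure, so one must build the finiteness and bi-collaredness of the seams into the Brown construction itself.

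For (iv), I pull $\mu$ back along the interior embedding. Since $f_0|_{\inte B^n(1)}$ is a homeomorphism onto the dense open set $V := f_0(\inte B^n(1))$ and $\mu(M \setminus V) = 0$ by (iii), the pullback of $\mu|_V$ via $f_0|_{\inte B^n(1)}$, extended by zero on $\partial B^n(1)$, is a Borel probability measure $\nu$ on $B^n(1)$. One checks directly that $\nu$ is an Oxtoby--Ulam measure on $B^n(1)$: nonatomicity and full support follow from the corresponding properties of $\mu$ and the density of $V$, while $\nu(\partial B^n(1)) = 0$ is automatic. Since normalized Lebesgue measure is also Oxtoby--Ulam on $B^n(1)$, Theorem~\ref{thm:ou} supplies a homeomorphism $h \colon B^n(1) \to B^n(1)$ isotopic to the identity and fixing $\partial B^n(1)$ with $h_*\mathrm{Leb} = \nu$. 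Setting $f := f_0 \circ h$ closes the argument: $h$ preserves $\inte B^n(1)$ and $\partial B^n(1)$ setwise, so (i), (ii), (iii) persist, and (iv) holds by the chain rule for pushforwards.
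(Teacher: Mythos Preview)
The paper does not give its own proof of this lemma: it attributes (i)--(ii) to Brown and (iii) to Fathi, and its only argumentative remark is that (iv) follows from (i)--(iii) together with Theorem~\ref{thm:ou}, which is exactly your argument for (iv). Your outline of Brown's engulfing construction for (i)--(ii) is also along standard lines.

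Your argument for (iii), however, has a genuine gap. You assert that every bi-collared compact codimension-one set $S \subset M$ has $\mu$-measure zero for every Oxtoby--Ulam $\mu$, because the uncountable disjoint family $\{S \times \{t\}\}_{t}$ forces almost every slice to be null, and then ``a limit argument using nonatomicity and outer regularity'' propagates this to $t = 0$. The first step is fine, but no such limit argument exists: knowing $\mu(S \times \{t_n\}) = 0$ along a sequence $t_n \to 0$ says nothing about $\mu(S \times \{0\})$, and neither nonatomicity (which concerns only singletons) nor outer regularity bridges this. A concrete counterexample on $[0,1]^2$ is $\mu = \tfrac{1}{2}\,\mathrm{Leb} + \tfrac{1}{2}\,(\delta_{1/2} \otimes \mathrm{Leb})$, which is nonatomic, fully supported, and null on the boundary, yet assigns measure $\tfrac{1}{4}$ to the bi-collared arc $\{1/2\} \times [1/4,3/4]$.

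The repair is to stop trying to prove (iii) for an \emph{arbitrary} Brown map and instead build the null-measure condition into the construction. At each engulfing step the new seam sits in a one-parameter family of parallel slices $S \times \{t\}$; since only countably many $t$ can have $\mu(S \times \{t\}) > 0$, you may always slide the seam to a level of measure zero before gluing. This freedom in placing the seams, exercised relative to the given $\mu$, is the substance of Fathi's refinement that the paper cites.
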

The conditions (i) and (ii) already imply that an Oxtoby--Ulam measure on $M$ exists.
For instance, one can pull back the Lebesgue measure on a ball using the surjection \[B^n(1)\longrightarrow M\] from Lemma~\ref{lem:brown-measure}. 
The condition (iv) is also easy to be obtained from the previous conditions and Theorem~\ref{thm:ou}; see also~\cite{GP1975}. 

For a possbily non-compact manifold, we have the following variation also due to Fathi, which loosens the condition on the surjectivity of the map.
\begin{lem}[\cite{Fathi1980}]\label{lem:brown-measure2}
If a connected $n$--manifold $M$ has nonempty boundary
and if $M$ is equipped with a nonatomic, fully supported Radon measure $\mu$ that assigns zero measure to $\partial M$, then there exists an open embedding 
\[
f\co \bH^n_+:=\{(x_1,\ldots,x_n)\in\bR^n\mid x_n\ge0\}\longrightarrow M
\]
such that the following hold:
\be[(i)]
\item $f(\inte \bH^n_+)\sse\inte M$ and $f(\partial\bH^n_+)\sse\partial M$;
\item $M\setminus f(\bH^n_+)$ is closed and of measure zero.
\ee
\end{lem}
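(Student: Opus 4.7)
The plan is to upgrade the compact surjection from Lemma~\ref{lem:brown-measure} to the non-compact-with-boundary setting by decoupling the statement into a purely topological construction followed by a measure-theoretic adjustment. That is, I would first produce an open embedding $f_0\co \bH^n_+\to M$ satisfying (i) and with complement a closed, nowhere-dense, locally tame subset $S$, and then modify it to a map $f$ for which $\mu(S)=0$.

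For the topological step, exhaust $M$ by an increasing sequence of compact connected $n$-submanifolds-with-corners $M_1\sse M_2\sse\cdots$ with $M=\bigcup_k M_k$, chosen so that each transition $M_k\hookrightarrow M_{k+1}$ can be described as attaching finitely many handles along (parts of) $\partial M_k$, and so that the frontier $\partial M_k\cap \inte M$ is collared. Apply Lemma~\ref{lem:brown-measure} to $M_1$ to get a surjection from a half-ball onto $M_1$; the unused face of this half-ball, corresponding to the interior frontier of $M_1$, is naturally identified with a portion of $\partial\bH^n_+$. At each subsequent stage one attaches the corresponding half-ball for $M_{k+1}\setminus \inte M_k$ along this face, using the collar to ensure the map is an open embedding. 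Passing to the limit yields $f_0\co \bH^n_+\to M$ whose image omits only the image $S$ of the countably many ``walls'' between successive pieces; by construction $S$ is closed and consists of locally finitely many bi-collared tame $(n-1)$-cells in $\inte M$ together with tame $(n-2)$-cells in $\partial M$.

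For the measure-theoretic step, the key elementary observation is that any nonatomic Radon measure $\mu$ on $\bR^n$ assigns zero measure to all but countably many parallel hyperplanes, since the monotone function $t\mapsto \mu(\{x_n\le t\})$ has at most countably many discontinuities. In each of the (countably many) local charts in which a piece of $S$ is a smoothly embedded $(n-1)$-disk, one can therefore choose a small self-homeomorphism of $M$, supported in an arbitrarily thin neighborhood of that disk, which slides the disk onto a $\mu$-null hyperplane. Since $S$ is locally finite, these local adjustments patch into a single homeomorphism $\phi$ of $M$ restricting to the identity outside a neighborhood of $S$ and preserving $\inte M$ and $\partial M$ separately; then $f:=\phi\circ f_0$ satisfies both (i) and (ii). Alternatively, one can invoke Fathi's non-compact measure classification theorem (a direct analog of Theorem~\ref{thm:ou}) to move $S$ into a $\mu$-null set in one stroke.

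The main obstacle is the compatibility of the local measure-adjustments along the lower-dimensional strata where walls meet, and, more globally, ensuring that the perturbations assemble into a bona fide homeomorphism of $M$ respecting $\partial M$. This patching must also be done while preserving the open embedding property of $f_0$, so the perturbations have to be small in a chart-wise $C^0$ sense and controlled so that $\phi\circ f_0$ remains injective at the seams. This is essentially where Fathi's technical work in~\cite{Fathi1980} lives, and my plan is to cite it for the patching rather than reprove it.
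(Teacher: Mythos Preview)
The paper does not prove Lemma~\ref{lem:brown-measure2}; it is stated with the citation \cite{Fathi1980} and used as a black box, so there is no proof in the paper to compare your proposal against.

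As for the proposal itself, the broad two-step strategy (topological embedding first, then measure adjustment) is reasonable and is in the spirit of Fathi's arguments, but several of the individual steps are not justified at the level you present them. First, invoking Lemma~\ref{lem:brown-measure} on $M_1$ gives a surjection from the full ball $B^n(1)$, not from a half-ball with a distinguished face available for later attachment; extracting the half-ball picture you want already requires knowing how the boundary of $M_1$ sits relative to the Brown map, which is exactly the kind of boundary-respecting statement you are trying to prove. Second, it is not clear why iterated gluing of such pieces along collared frontiers yields an open embedding of the specific model space $\bH^n_+$ rather than of some other simply connected half-manifold; you would need an auxiliary argument that the domain you build is actually homeomorphic to $\bH^n_+$. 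Third, the exhaustion of a topological manifold by compact submanifolds with bicollared frontiers is itself a nontrivial input (essentially the handle or end theory you allude to). Your measure-adjustment step is more solid, and your acknowledgment that the patching is where the real work lies is accurate; but since you end by deferring precisely that patching to \cite{Fathi1980}, the honest summary is that your proposal is a heuristic outline that ultimately cites Fathi at the crucial point, just as the paper does.
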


We call a topologically embedded image of $B^n(1)$ in a manifold $M^n$ a \emph{ball}. 
The same goes for an \emph{open ball} in $M$.
If there exists an embedding \[h\co B^n(2)\longrightarrow M,\] then the image $h(B^n(1))$ is called a \emph{collared ball}~\cite[Chapter 17]{CV1977}. The same goes for a \emph{collared open ball}.
In the case when $M$ is equipped with an Oxtoby--Ulam measure $\mu$, we say a collared ball $B$ is $\mu$--\emph{good} (or, simply \emph{good}) if $\partial B$ has measure zero.
There exists an arbitrarily small covering of $M$ by $\mu$--good balls~\cite{Fathi1980}.
For brevity of exposition, 
by a \emph{good ball}, we mean both a collared ball in the context of $(M,G)\in\cM$ and a \emph{$\mu$--good ball} in the context of $(M,G)\in\cM_{\vol}$. The same goes for a good open ball.
Note that a good ball is always contained in the interior of $M$.

Recall the topological action of a group $G$ on $\inte M$ is \emph{path--transitive} 
if for all paths \[\gamma\co I\longrightarrow \inte M\] and for all neighborhoods $U$ of $\gamma(I)$
there exists $h\in G[U]$ such that $h(\gamma(0))=\gamma(1)$. 
We say the action of $G$ on $\inte M$ is \emph{$k$--transitive} if it induces a transitive action on the configuration space of $k$ distinct points in $\inte M$.
A path-transitive action on $\inte M$ is always $k$--transitive whenever $\dim M>1$; see~\cite[Lemma 7.4.1]{Banyaga1997}. 
Let us note the following fundamental facts on various notions of transitivity in manifolds. 

\begin{lem}\cite[Corollaries 2.1 and 2.2]{LeRoux2014}\label{lem:transitive}
For $(M,G)\in\cM_{(\vol)}$ with $\dim M>1$, we have the following.
\be
\item\label{p:trans-path}
The action of $G$ on $\inte M$ is path--transitive and $k$--transitive for all $k>0$.
\item\label{p:transitive-measure}
If $B_1$ and $B_2$ are good balls of the same measure in an open connected set $U\sse M$,
then there exists $g\in G[U]$ such that $g(B_1)=B_2$.
\ee
\end{lem}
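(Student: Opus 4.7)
The plan is to localize both claims inside small good balls, where Lemma~\ref{lem:brown-measure} provides a canonical model $(B^n(1),\text{Lebesgue})$ that reduces matters to explicit homeomorphisms of a standard ball, and then to propagate these local constructions along a chain of such balls covering a path in $U$.

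For (1), I first establish path--transitivity. Given $\gamma\co I\longrightarrow\inte M$ and a neighborhood $U$ of $\gamma(I)$, compactness of $\gamma(I)$ permits a subdivision $I=[t_0,t_1]\cup\cdots\cup[t_{m-1},t_m]$ such that each $\gamma([t_{i-1},t_i])$ lies in a single good ball $V_i\sse U$, with consecutive balls overlapping at $x_i:=\gamma(t_i)\in V_i\cap V_{i+1}$. It then suffices to construct, for each $i$, an element $h_i\in G[V_i]$ with $h_i(x_{i-1})=x_i$: the composition $h_m\circ\cdots\circ h_1$ lies in $G[U]$ and sends $\gamma(0)$ to $\gamma(1)$. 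To build each $h_i$, use Lemma~\ref{lem:brown-measure} to parametrize a slight enlargement of $V_i$ by $B^n(2)$, matching (in the measure case) the restricted Oxtoby--Ulam measure with a rescaled Lebesgue measure, and then apply an explicit compactly supported Lebesgue--preserving homeomorphism of $B^n(2)$ carrying the preimage of $x_{i-1}$ to that of $x_i$. For $k$--transitivity, induct on $k$: given tuples $(p_1,\ldots,p_{k+1})$ and $(q_1,\ldots,q_{k+1})$ of distinct points, use the inductive hypothesis to match the first $k$ coordinates, then invoke path--transitivity along a path from the image of $p_{k+1}$ to $q_{k+1}$ in $\inte M\setminus\{q_1,\ldots,q_k\}$, which is path--connected because $\dim M>1$; a sufficiently small neighborhood of this path also avoids $\{q_1,\ldots,q_k\}$.

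For (2), the first task is to engulf $B_1\cup B_2$ inside a single ambient good ball $V\sse U$. Pick a path in $U$ connecting a point of $B_1$ to a point of $B_2$, thicken it to a tube $T\sse U$, and use a Schoenflies--type engulfing argument to show that $B_1\cup T\cup B_2$ lies inside a good ball $V\sse U$. Apply Lemma~\ref{lem:brown-measure} to identify $V$ with the standard ball $B^n(1)$, reducing the problem to swapping two collared sub--balls of $\inte B^n(1)$ by a homeomorphism compactly supported in $B^n(1)$. In the non--measure case this is classical, and pulling back gives $g\in G[V]\sse G[U]$ with $g(B_1)=B_2$. In the measure case, first apply Theorem~\ref{thm:ou} inside $V$ relative to $\partial V$ to straighten $B_1$ and $B_2$ into round sub--balls of $\inte B^n(1)$ of equal Lebesgue measure (which they have, since $\mu(B_1)=\mu(B_2)$), then interchange these round balls by an explicit compactly supported Lebesgue--preserving homeomorphism of $B^n(1)$, and finally conjugate back.

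The hard part is the engulfing step producing the ambient ball $V$: in the topological category one cannot cavalierly isotope topological balls, and the assertion that $B_1\cup T\cup B_2$ sits inside a single collared (and, where applicable, $\mu$--good) ball in $U$ requires Brown's Schoenflies theorem together with bicollaring arguments to rule out pathological attachments. Once $V$ is in hand the remaining manipulations are routine, since Theorem~\ref{thm:ou} and Lemma~\ref{lem:brown-measure} together reduce ``measure--preserving'' to ``Lebesgue--preserving on the standard ball,'' where everything can be written down by hand.
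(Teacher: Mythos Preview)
Your argument for part~(\ref{p:trans-path}) is correct and spells out the standard proof that the paper merely cites.

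For part~(\ref{p:transitive-measure}) there is a real gap, and it is not quite where you locate it. You flag the engulfing step as ``the hard part,'' and indeed producing a single collared ball $V\sse U$ containing $B_1\cup T\cup B_2$ is already delicate in the purely topological category, where there is no regular-neighborhood theory available to thicken an arc. But the deeper problem is the step you dismiss as ``classical'': carrying one collared sub-ball of $\inte B^n(1)$ to another by a compactly supported homeomorphism is precisely the assertion that any two locally flat $n$-balls in a connected $n$-manifold are ambiently isotopic, and this is essentially the Annulus Theorem of Kirby~\cite{Kirby1969} and Quinn~\cite{Quinn1982}. In the measure-preserving case your invocation of Theorem~\ref{thm:ou} is likewise insufficient: Oxtoby--Ulam conjugates \emph{measures}, not prescribed balls, so it does not by itself ``straighten $B_1$ and $B_2$ into round sub-balls'' of equal Lebesgue volume.

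The paper takes a different route that avoids the engulfing detour entirely. It quotes Le~Roux~\cite{LeRoux2014}---whose argument already packages the Annulus Theorem together with Oxtoby--Ulam---for the case $U=M$, and then reduces a general open connected $U$ to that case by exhausting $U$ with compact codimension-zero submanifolds $M_i\sse U$ having measure-zero boundary (via~\cite{Quinn1988}); once $B_1,B_2\sse\inte M_i$ for some $i$, Le~Roux's result applies inside $M_i$. This exhaustion is both easier to justify than engulfing two prescribed balls into a single ambient ball and more transparent about where the deep topological input actually enters.
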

\begin{proof}
The path--transitivity of part (\ref{p:trans-path}) is well-known; see~\cite[Section 7.7]{Banyaga1997} for $G=\Homeo_0(M)$, and~\cite[p.~85]{Fathi1980} for $G=\Homeo_{0,\mu}(M)$. The $k$--transitivity follows immediately.

The case when $U=M$ in part (\ref{p:transitive-measure}) is precisely given in~\cite[Corollary 2.2]{LeRoux2014} by Le Roux, 
based on the Annulus Theorem of Kirby~\cite{Kirby1969} and Quinn~\cite{Quinn1982} as well as the Oxtoby--Ulam theorem. 
In general, we can exhaust the topological manifold $U$ by a sequence of compact bounded manifolds $\{M_i\}$ so that some $M_i$ contains $B_1$ and $B_2$ in its interior; this can be seen from~\cite{Quinn1988}, as explained in~\cite{MO-Quinn}. We can further require that $\partial M_i$ has measure zero by countable additivity. Applying Le Roux's argument for $M_i$, we obtain the desired transitivity.\end{proof}

\begin{lem}\label{lem:measure-ball}
Let $M$ be a compact, connected $n$--manifold with $n\ge2$,
equipped with an Oxtoby--Ulam measure $\mu$.
If $U\sse M$ is an open connected subset, then for each positive real number $r<\mu(U)$, there exists a good ball of measure precisely $r$ inside $U$.
Moreover, we may require that $U\setminus B$ is connected.
\end{lem}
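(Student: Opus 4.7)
The plan is to realize $B$ as the image, under the Brown parameterization (Lemma~\ref{lem:brown-measure}) of a compact connected $n$--submanifold $K\subset U$ carrying nearly the full $\mu$--measure of $U$, of a closed Euclidean ball of carefully chosen radius. First, I would find a compact connected $n$--submanifold with boundary $K\subset U$ satisfying $\mu(K)>r$ and $\mu(\partial K)=0$, exactly as in the proof of Lemma~\ref{lem:transitive}(\ref{p:transitive-measure}): exhaust $U$ by compact $n$--submanifolds with boundary via~\cite{Quinn1988}, use countable additivity of $\mu$ to arrange that each boundary is $\mu$--null, and take $K$ to be the connected component of an exhausting submanifold of sufficiently large index which contains a fixed compact connected subset $C\subset U$ of $\mu$--measure greater than $r$. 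Such a $C$ is easily produced by picking finitely many good balls in $U$ of total $\mu$--measure exceeding $r$ and linking them up by arcs using path-connectedness of $U$.

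Next, I would apply Lemma~\ref{lem:brown-measure} to the compact connected manifold $K$ equipped with the Oxtoby--Ulam measure $\mu|_K/\mu(K)$, obtaining a continuous surjection $f\colon B^n(1)\to K$ which is an embedding on $\inte B^n(1)$ (with image in $\inte K$, by part (ii) of the lemma) and pushes forward normalized Lebesgue measure to $\mu|_K/\mu(K)$. Setting $s:=(r/\mu(K))^{1/n}\in(0,1)$, I would let $D\subset\inte B^n(1)$ be the closed Euclidean ball of radius $s$ centered at the origin and define $B:=f(D)$. Since a slightly larger closed ball $D'\supset D$ still lies inside $\inte B^n(1)$, the restriction $f|_{D'}$ is an embedding, which displays $B$ as a collared closed topological ball contained in $\inte K\subset U$; property~(\ref{p:brown-leb}) of Lemma~\ref{lem:brown-measure} then gives $\mu(B)=\mu(K)\cdot s^n = r$ and $\mu(\partial B)=0$, so $B$ is a good ball of measure precisely $r$.

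Finally I would check that $U\setminus B = (K\setminus B)\cup(U\setminus K)$ is connected. Since $f$ is surjective and injective on $\inte B^n(1)$, a brief case-check using the disjointness of $f(\inte B^n(1))$ and $f(\partial B^n(1))$ shows that $K\setminus B = f(B^n(1)\setminus D)$, a continuous image of the connected spherical shell $B^n(1)\setminus D$ (connected because $n\geq 2$), and hence itself connected. Any connected component $C$ of the open set $U\setminus K$ has nonempty topological boundary in $U$ (otherwise $C$ would be clopen in the connected set $U$), and this boundary lies in $\partial K\subset K\setminus B$; so every such $C$ shares a limit point with the connected set $K\setminus B$, making $U\setminus B$ connected. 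The main technical hurdle is the construction of $K$ in the first step; once that is secured, the rest is a direct application of Lemma~\ref{lem:brown-measure} together with elementary point-set topology.
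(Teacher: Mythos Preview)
Your argument is correct, and it takes a genuinely different route from the paper's own proof. The paper first removes a small good ball $Q\subset U$ so that $M':=U\setminus\inte Q$ is a connected manifold with nonempty boundary, then applies Fathi's non-compact version (Lemma~\ref{lem:brown-measure2}) to $M'$ in order to produce a single large good ball $\hat B\subset U$ with $\mu(\hat B)>r$; only then does it invoke the Oxtoby--Ulam theorem on $\hat B$ to locate a good sub-ball of measure exactly~$r$. The connectedness of $U\setminus B$ is handled by the general fact that a collared ball is cellular, hence pointlike, so its complement in any connected open set remains connected. By contrast, you bypass Lemma~\ref{lem:brown-measure2} and the second Oxtoby--Ulam step entirely: once you have the compact connected codimension-$0$ submanifold $K\subset U$, a single application of the compact Brown--Fathi map (Lemma~\ref{lem:brown-measure}) lets you read off the desired ball directly as $f(D)$ for the correctly scaled Euclidean radius, and your connectedness argument is a hands-on computation with the shell $B^n(1)\setminus D$ rather than an appeal to cellularity. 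The trade-off is that you front-load the work into building $K$ via the Quinn exhaustion (the same device the paper uses elsewhere, in Lemma~\ref{lem:transitive}), whereas the paper's proof avoids that exhaustion here at the cost of invoking the slightly heavier Lemma~\ref{lem:brown-measure2} and an extra normalization step. Both approaches are of comparable length and difficulty; yours has the minor aesthetic advantage that the ball of the exact target measure appears in one stroke.
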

\begin{proof}
Note the general fact that for a connected open subset $U$ of $M$ and for a collared ball $B$ in $U\cap\inte M$, the set $U\setminus B$ is connected; this can be seen from the fact that a collared ball is cellular, and that each celluar set is pointlike~\cite[Chapter 17]{CV1977}.

Pick sufficiently small good ball $Q\sse U$ 
such that the connected $n$--manifold \[M':=U\setminus \inte Q\] has measure larger than $r$
and has nonempty boundary. Applying Lemma~\ref{lem:brown-measure2} to $M'$, we have an open embedding
\[
f\co \bH^n_+\longrightarrow M'
\]
such that \[f(\partial \bH^n_+)\sse \partial M'=(\partial M\cap U)\cup\partial Q.\]
Since $\inte \bH^n_+$ is a countable increasing union of collared balls,
we can find a collared ball $\hat B$ in $M'$ having measure larger than $r$;
moreover, we can further require that $\hat B$ is good by countable additivity of $\mu$.
Applying Theorem~\ref{thm:ou} to $\hat B$, we see that the restriction of $\hat B$ is conjugate to a Lebesgue measure on a cube.
It is then trivial to find a good ball $B\sse \hat B$ with measure precisely $r$.
\end{proof}


\subsection{Regular open sets and homeomorphism groups}
Let $X$ be a topological space. 
If $A\subseteq X$ is a subset then we write $\cl A$ and $\inte A$ for its closure and interior, respectively, and
    \[
    \fr(A):=\cl A\setminus\inte A\] for the \emph{frontier} of $A$.
    
A set $U\sse X$ is \emph{regular open} if $U=\inte\cl U$.
For instance, a good ball is always regular open.
The set of regular open subsets of $X$ forms a Boolean algebra, denoted as $\ro(M)$.
In this Boolean structure, 
the minimal and maximal elements are the empty set and $X$ respectively.
The meet is the intersection,
and the join of two regular open sets $U$ and $V$ is given by \[U\oplus V:=\inte\cl(U\cup V).\]
We write \[U_1\sqcup U_2=V\] when $V$ is the disjoint union of two sets $U_1$ and $U_2$.

The complement coincides with the exterior: \[U^{\perp}:=X\setminus \cl U.\] 
Consequently, the Boolean partial order $U\le V$ coincides with the inclusion $U\sse V$ for $U,V\in \ro(X)$. For each subcollection $\FF\sse \ro(X)$ of regular open sets we can define its supremum as
\[ \sup \FF :=\inte\cl\left( \bigcup \FF\right)\in\ro(X).\]
In particular, $\ro(X)$ is a complete Boolean algebra.
We remark that the collection of open sets of a manifold (or indeed of an arbitrary topological space) is not a Boolean algebra in a natural way, but rather a Heyting algebra, since it is possible that $U\subsetneq U^{\perp\perp}$.

By a \emph{regular open cover} of a space, we mean a cover consisting of regular open sets. We will repeatedly use the following straightforward fact, which implies that every finite open cover of a normal space can be refined by an open cover which consists of regular open sets.

    
\begin{lem}\label{lem:refine}
If $\UU=\{U_1,\ldots,U_m\}$ is an open cover of a normal space, then there exists a regular open cover $\VV=\{V_1,\ldots,V_m\}$ such that $\cl V_i\sse U_i$ for each $i$.
\end{lem}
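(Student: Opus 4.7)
The plan is to combine the classical shrinking lemma for finite open covers of normal spaces with the standard operator $A\mapsto \inte\cl A$, which automatically produces regular open sets.

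First, I would invoke the shrinking lemma: given a finite open cover $\UU=\{U_1,\dots,U_m\}$ of a normal space $X$, there exists an open cover $\{W_1,\dots,W_m\}$ of $X$ with $\cl W_i\sse U_i$ for each $i$. This is proved by induction on $i$, at each step using normality to separate the disjoint closed sets $X\setminus(U_i\cup W_1\cup\dots\cup W_{i-1}\cup U_{i+1}\cup\dots\cup U_m)$ and $X\setminus U_i$; the former is closed and contained in the open set $U_i$, and normality produces an open set $W_i$ whose closure still lies in $U_i$ and which preserves the covering property.

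Second, I would set $V_i:=\inte\cl W_i$. For any subset $A$ of a topological space one has $\inte\cl(\inte\cl A)=\inte\cl A$ — this follows because $\cl A\supseteq \cl\inte\cl A$ (apply closure to $\inte\cl A\sse\cl A$) gives $\inte\cl A\supseteq\inte\cl\inte\cl A$, and the reverse inclusion is immediate from $\inte\cl A\sse\cl\inte\cl A$. Hence each $V_i$ is regular open. Moreover $W_i\sse V_i$, so $\{V_i\}$ still covers $X$, and $\cl V_i=\cl\inte\cl W_i\sse\cl W_i\sse U_i$.

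There is no real obstacle here — the only content is the shrinking lemma, and the passage to regular open sets is formal. I would present the whole argument in a few lines, noting that both ingredients (shrinking in normal spaces, and the idempotence of $\inte\cl$) are entirely standard.
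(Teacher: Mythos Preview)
Your proposal is correct and follows essentially the same approach as the paper: invoke the shrinking lemma to obtain an open cover $\{W_i\}$ with $\cl W_i\sse U_i$, then set $V_i:=\inte\cl W_i$. The paper simply cites the shrinking lemma and declares $V_i$ regular open without spelling out the verifications you include.
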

\begin{proof}
Under the given hypothesis, one can find an open cover $\{W_i\}$ satisfying $\cl W_i\sse U_i$ for each $i$; see~\cite[Corollary 1.6.4]{Coornaert2005}.
It then suffices for us to take $V_i:=\inte\cl W_i$, which is clearly a regular open set.
\end{proof}

Let $g\in\Homeo(X)$.
We denote its fixed point set by $\fix g$, and define its \emph{(open) support} as $\supp g:=X\setminus \fix g$.
We then define its \emph{extended support} as
\[
\suppe g:= \inte\cl \supp g = \inte \cl (X\setminus\fix g).
\]
Let $G\le\Homeo(X)$.
We define
the \emph{rigid stabilizer (group)} of $A\sse X$ as
\[
G[A]:=\{g\in G\mid  \supp g\sse A\}.\]
If $U$ is regular open in $X$, we note that
\[G[U]=\{g\in G\mid \suppe g\sse U\}.
\]

Recall from the introduction that the group $G\le\Homeo(X)$ is \emph{locally dense} 
if for each nonempty open set $U$ and for each $p\in U$ we have hat
\[
\inte\cl \left(G[U].p\right)\ne\varnothing.
\]
More weakly, we say $G$ is \emph{locally moving} if the rigid stabilizer of each nonempty open set is nontrivial. 

If $G$ is a locally moving group of homeomorphisms of $X$ then $\ro(X)$ has no atoms, and the set of extended supports \[ \{\suppe g\mid g\in G\}\] is \emph{dense} in the complete Boolean algebra $\ro(X)$, i.e.~ for all $U\in\ro(X)$ there exists $g\in G$ such that $\suppe g\subseteq U$; see \cite{Rubin1996} and \cite[Theorem 3.6.11]{KK2021book}. 
When the ambient space is a manifold,
the fundamental observation is that every regular open set can actually be represented as the extended support of some homeomorphism.

\begin{prop}\label{prop:full-support}
Suppose that $(M,G)\in\cM$  with $\dim M\ge1$,
or that $(M,G)\in\cM_{\vol}$ with $\dim M>1$.
Then each regular open set of $M$ is the extended support of some element of $G$.
\end{prop}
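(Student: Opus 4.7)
The plan is to produce, for each nonempty $U\in\ro(M)$, an element $g\in G$ with $\suppe g=U$ via three steps: exhaust $U$ by countably many pairwise disjoint good open balls of shrinking diameter and dense union; construct on each such ball an element of $G$ with nowhere-dense fixed set and identity on the boundary; and concatenate these into one homeomorphism. (The case $U=\varnothing$ is trivial with $g=\Id$.)

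For Step 1, I fix a countable basis $\{W_k\}_{k\in\N}$ for the topology of $U$, and at stage $k$ either pick a good open ball $V_k\subseteq(W_k\cap\inte M)\setminus\bigcup_{i<k}\cl V_i$ of diameter at most $1/k$ (possible since every nonempty open subset of $\inte M$ contains arbitrarily small good balls), or skip. After re-indexing, $\{V_i\}_{i\in\N}$ is pairwise disjoint with $\diam V_i\to 0$; and for any nonempty open $W\subseteq U$, a basis element $W_k\subseteq W$ either receives its own $V_k\subseteq W$, or lies in $\bigcup_{i<k}\cl V_i$, in which case Baire category forces some $V_i$ to meet $W_k$. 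Hence $\bigsqcup_i V_i$ is dense in $U$.

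For Step 2, on each $V_i$ I construct $g_i\in G$ with $\supp g_i\subseteq V_i$ dense in $V_i$ (so that $\suppe g_i=V_i$), together with an accompanying isotopy $g_{i,t}$ to the identity through elements supported in $V_i$. In the $\cM$ case ($\dim M\ge1$), I identify $\cl V_i$ with $B^n$ via the collar and let $g_i$ be the cubing map $x\mapsto |x|^2 x$, whose fixed set is $\{0\}\cup\partial B^n$; the straight-line interpolation $((1-t)+t|x|^2)x$ provides an isotopy rel boundary, so $g_i\in\Homeo_0(M)\le G$. In the $\cM_{\vol}$ case ($\dim M\ge 2$), Theorem~\ref{thm:ou} conjugates $\mu|_{V_i}$ to Lebesgue measure on $B^n$, and I take $g_i$ to be the rotational twist $x\mapsto R_{f(|x|)}(x)$, where $R_\alpha$ rotates by $\alpha$ in a fixed $2$-plane and $f\co[0,1]\to[0,2\pi)$ vanishes only on $\{0,1\}$. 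This is Lebesgue-preserving, identity on $\partial B^n$, and its fixed set $\{0\}\cup\partial B^n\cup(L\cap\inte B^n)$ (with $L$ the complementary $(n-2)$-plane) is nowhere dense since $n\ge 2$; the analogous $R_{tf(|x|)}(x)$ furnishes an isotopy through measure-preserving maps. Transporting back, $g_i\in\Homeo_{0,\mu}(M)\le G$.

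For Step 3, I define $g(x):=g_i(x)$ for $x\in V_i$ and $g(x):=x$ otherwise, and splice the $g_{i,t}$ into an isotopy $g_t$. Bijectivity is immediate. For continuity at $x\in M\setminus\bigcup V_i$ with $x_n\to x$: if the indices $i_n$ such that $x_n\in V_{i_n}$ lie in a finite set, then $x\in\partial V_{i_j}$ for some $j$, and continuity of the finitely many relevant $g_{i_j}$ (each the identity on $\partial V_{i_j}$) yields $g(x_n)\to x$; if $i_n\to\infty$, then $|g(x_n)-x_n|\le\diam V_{i_n}\to 0$. The same estimate jointly in $(t,x)$ places $g\in G$. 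Finally, using that each $\supp g_i$ is dense in $V_i$ and that $\bigsqcup_i V_i$ is dense in $U$,
\[
\suppe g=\inte\cl\!\left(\bigsqcup_i\supp g_i\right)=\inte\cl\!\left(\bigsqcup_i V_i\right)=\inte\cl U=U.
\]
The principal obstacle will be Step 2 in the $\cM_{\vol}$ setting: verifying that the explicit Lebesgue-preserving twist, its isotopy through measure-preserving maps, and the nowhere-density of its fixed set all survive the Oxtoby--Ulam conjugation onto $V_i\subseteq M$.
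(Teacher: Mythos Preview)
Your proof is correct and follows essentially the same approach as the paper: exhaust $U$ by countably many pairwise disjoint good open balls of shrinking diameter and dense union, build on each ball a homeomorphism in $G$ with nowhere-dense fixed set, and concatenate via a uniform-convergence argument. The differences are cosmetic---the paper indexes the balls by a countable dense subset of $U\cap\inte M$ rather than a countable basis, and simply asserts the existence of the per-ball homeomorphisms rather than writing down your explicit cubing map and rotational twist---but the skeleton and the key estimates are identical.
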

\begin{proof}
Pick a countable dense subset $\{x_i\}_{i\in\omega}$ of $U\cap\inte M$. 
Set $j_1:=1$, and pick a good ball $B_1$ containing $x_{j_1}=x_1$ such that $\diam B_1<1$ and such that $B_1\sse U$.
Suppose we have constructed a sequence \[j_1<j_2<\cdots j_k,\] and a disjoint collection of good balls $B_1,\ldots,B_k$ 
such that  $x_{j_i}\in B_i$ and such that \[\diam B_i<1/i\] for each $i$;
furthermore, we require that \[\{x_1,x_2,\ldots,x_{j_k}\}\sse B_1\cup \cdots \cup B_k\sse U.\]
If \[U=\bigcup_{i=1}^k B_i,\] then we terminate the procedure; 
otherwise,  we let $j_{k+1}$ be the minimal index $j$ such that \[x_j\in W:=U\setminus\bigcup_{i=1}^k B_i.\]
Pick a good ball $B_{k+1}\sse W$ containing $x_{j_{k+1}}$ such that \[\diam B_{k+1}<1/(k+1).\]
Thus, we build an infinite disjoint collection of good balls  $\{B_i\}_{i\in\omega}$ in $U$ such that \[\{x_i\}_{i\in\omega}\sse \bigcup_i B_i.\]

We claim that there exists $h_i\in G$ for each $i$ such that $\suppe h_i=\inte B_i$.
In the case where there is no measure under consideration, this is clear from the definition of a good ball. 
In the case when a measure $\mu$ is part of the data,
we first pick a homeomorphism $h$ in $\Homeo_{0,\on{Leb}}(B^n(1), \partial B^n(1))$ whose fixed point set has empty interior; here, the condition that $\dim M>1$ is used.
Let us also pick a homeomorphism \[u_i\co B^n(1)\longrightarrow B_i.\] 

We see from Theorem~\ref{thm:ou} that the pullback measure of $\mu$ on $B^n(1)$ under the map $u_i$ is conjugate to (a rescaling of) the Lebesgue measure by a homeomorphism. Hence, by conjugation and extension by the identity, we obtain a homeomorphism $h_i\in\Homeo_{0,\mu}(M)$ satisfying 
\[\fix h_i=(M\setminus\inte B_i)\sqcup Q_i\]
for some closed set $Q_i\sse B_i$ with empty interior. 
This proves the claim.

Since we have \[\sup_{x}d(x,h_i(x))\le \diam B_i<1/i\] for all $i$, we see from the uniform convergence theorem that the infinite product $g:=\prod_i h_i$ converges in $\Homeo(M)$, and is isotopic to the identity. 
By definition, \[\suppe g = \inte\cl \left(\bigcup_i\inte B_i\right) = U.\]
Hence, this map $g$ satisfies the conclusion.
    \end{proof}

Note that measure-preserving homeomorphism groups of compact one--manifolds are highly restricted.
\begin{prop}\label{prop:dim1}
For each compact connected one--manifold $M$, 
there exist a group theoretic formula $\phi_M^{\vol}$ such that 
when $(N,H)\in\cM_{\vol}$,
we have that \[ H\models \phi_M^{\vol} \] if and only if $N$ and $M$ are homeomorphic.
\end{prop}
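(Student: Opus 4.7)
The plan is to enumerate the compact connected one-manifolds---there are only two, namely $S^1$ and $[0,1]$---and explicitly determine every group $H$ that can appear in a pair $(N,H)\in\cM_\vol$ with $N$ homeomorphic to one of these. Since any two Oxtoby--Ulam measures on a compact connected manifold are conjugate by Theorem~\ref{thm:ou}, I may normalize $\mu$ to Lebesgue measure. For $[0,1]$, monotonicity together with the identity $f(x)=\mu([0,f(x)])=\mu(f([0,x]))=x$ forces every orientation-preserving measure-preserving homeomorphism to be the identity, so $\Homeo_{0,\mu}([0,1])=\{e\}$ and $\Homeo_\mu([0,1])\cong\bZ/2\bZ$, generated by $x\mapsto 1-x$. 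Lifting through the covering $\bR\to S^1$ and applying the same argument, an orientation-preserving measure-preserving homeomorphism of $S^1$ must be a rotation, hence $\Homeo_{0,\mu}(S^1)\cong\SO(2)$ and $\Homeo_\mu(S^1)\cong O(2)$. Since no $0$-dimensional manifold admits a nonatomic probability measure, the possible groups $H$ for a one-dimensional $N$ in $\cM_\vol$ are precisely the four groups $\{e\},\,\bZ/2\bZ,\,\SO(2),\,O(2)$.

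Given this short list, I would take the distinguishing sentences to be
\[
\phi_{[0,1]}^\vol\;:=\;\exists z\,\forall y\,(y=e\vee y=z),
\]
expressing $|H|\le2$, and
\[
\phi_{S^1}^\vol\;:=\;\bigl(\exists x\,(x^2\ne e)\bigr)\wedge\bigl(\forall g\,\forall h\,(g^2h^2=h^2g^2)\bigr),
\]
expressing that $H$ has an element of order greater than $2$ and that all squares in $H$ commute pairwise. The ``if'' direction amounts to a direct check against the four candidates: $\{e\}$ and $\bZ/2\bZ$ satisfy $\phi_{[0,1]}^\vol$, while $\SO(2)$ is abelian with elements of every order and every square in $O(2)$ lies in $\SO(2)$, so both groups attached to $S^1$ satisfy $\phi_{S^1}^\vol$.

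The main obstacle is the ``only if'' direction when $\dim N\ge2$. The sentence $\phi_{[0,1]}^\vol$ fails immediately, since $H\supseteq\Homeo_{0,\mu}(N)$ acts transitively on the uncountable set $\inte N$ by Lemma~\ref{lem:transitive}(\ref{p:trans-path}), making $|H|>2$. To defeat $\phi_{S^1}^\vol$, I would pick four distinct points $a,b,c,d\in\inte N$ and invoke the $k$-transitivity of $H$ on $\inte N$ with $k=4$---again from Lemma~\ref{lem:transitive}(\ref{p:trans-path})---to produce $g,h\in H$ such that $g$ sends the ordered tuple $(a,b,c,d)$ to $(b,c,a,d)$ and $h$ sends $(a,b,c,d)$ to $(d,a,c,b)$. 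A direct computation then yields
\[
g^2h^2(a)=g^2(b)=a\qquad\text{and}\qquad h^2g^2(a)=h^2(c)=c,
\]
so $g^2h^2\ne h^2g^2$, contradicting $\phi_{S^1}^\vol$. This excludes every manifold of dimension at least two from satisfying either sentence and completes the plan.
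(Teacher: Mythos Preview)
Your proof is correct and follows essentially the same approach as the paper: the sentence for $[0,1]$ asserts $|H|\le 2$, and the sentence for $S^1$ asserts that all squares commute together with a clause excluding the tiny groups. Your treatment of the higher-dimensional case is more explicit than the paper's---you use $4$-transitivity to exhibit concrete $g,h$ with $g^2h^2\ne h^2g^2$, whereas the paper simply remarks that $H$ is not virtually abelian---but the underlying idea is the same.
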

\begin{proof}
Since $\Homeo_{\mu}(I)\cong\bZ/2\bZ$, the group theoretic sentence $\phi_I^{\vol}$ stating that there are at most two elements in the group is satisfied by a pair $(N,H)\in\cM_{\vol}$ if and only if $N\cong I$.
Since $\Homeo_{\mu}(S^1)$ contains the abelian group $\Homeo_{0,\mu}(S^1) \cong \on{SO}(2,\bR)$ as the index--two subgroup, a pair of the form $(S^1,G)\in\cM_{\vol}$ satisfies the sentence \[\phi_{S^1}^{\vol}:=(\forall\gamma_1,\gamma_2)
[\gamma_1^2\gamma_2^2\gamma_1^{-2}\gamma_2^{-2}=1]\wedge\neg\phi_I^{\vol}.\] Finally, if $(N,H)\in\cM_{\vol}$ with $\dim N>1$, then $H$ is not virtually abelian and hence $H$ does not satisfy the above formulae.
\end{proof}

\subsection{First order logic}\label{ss:model theory}
Proposition~\ref{prop:dim1} establishes the measure-preserving case of the main theorem with $\dim M=1$. 
Our strategy for all the other cases is to build a new language, one which is powerful enough that it can distinguish a given manifold from the other ones,
but which can still be ``interpreted'' to the language of groups. In order to do this, let us begin with  a brief review of the basic terminology from multi-sorted first order logic. Details can be found in~\cite{marker-book,tent-ziegler}
and also succinctly in~\cite{BH2017}.

On the syntactic side, a (multi-sorted, first order) \emph{language} $\LL$ is specified by \emph{logical symbols} and a \emph{signature}.
Logical symbols include quantifiers ($\forall$, $\exists$), logical connectives ($\wedge$, $\vee$, $\neg$, $\to$), the equality ($=$) and a  countable set of variables.
We often write auxiliary symbols such as parentheses or commas for the convenience of the reader.

A \emph{signature} consists of  \emph{sort symbols},
\emph{relation symbols} (also called as \emph{predicate symbols}), 
\emph{function symbols} and
 \emph{constant symbols}.
For the brevity of exposition we often regard a function or constant symbol as a special case of a relation symbol.
An \emph{arity function} is also in the signature,
which assigns a finite tuple of sort symbols to each relation symbol. The arity function for each constant symbol is further required to assign only a single (i.e. 1--tuple of) sort symbol.


A \emph{(well-formed) $\LL$--formula} is a juxtaposition of the above symbols which is ``valid''; the precise meaning of this validity requires a recursive definition~\cite{marker-book}, although it is intuitively clear.
For instance, if $P$ is a relation symbol with the arity value $(s,t)$ for some sort symbols $s$ and $t$,
and if $x$ and $y$ are variables with sort values $s$ and $t$, respectively,
then $Pxy$ is a formula. We write $P(x,y)$ instead of $Pxy$ for the ease of reading.
The \emph{language} $\LL$ specified by the above information is the collection of all formulae.
Unquantified variables in a formula are called $\emph{free}$, and a \emph{sentence} is a formula with no free variables.

On the semantic side, we have an $\LL$--\emph{structure} (or, an $\LL$--\emph{model}) $\mathcal{X}$, which is specified by a set $|\XX|$ called the \emph{universe}, a sort function $\sigma$ from $|\XX|$ to the set of sort symbols,
and an \emph{assignment} that is a correspondence from each relation symbol $P$ to an actual relation $P^\XX$ among tuples of the elements in the universe.
For each sort symbol $s$, we call $s^\XX:=\sigma^{-1}(s)$  the \emph{domain} of $s$ in $\XX$.
It is required that the relation $P^\XX$ respects the arity value of $P$.
For instance, if $P$ is as in the previous paragraph, then $P^\XX$ will be a subset of $s^{\XX}\times t^{\XX}$.
A function symbol is assigned the graph of some function, and often written as a function notation such as $f(x)=y$. 
A constant symbol is fixed as an element in the universe by an assignment.
An assignment (for relations) naturally extends to an assignment $\phi^\XX$ for each formula $\phi$.
We sometimes omit $\XX$ from $\phi^\XX$ when the meaning is clear.

For an $\LL$--formula $\phi$ with a tuple of free variables $\underbar x$,
and 
for a tuple $\underbar a$ of elements in $|\XX|$,
we write $\XX\models \phi(\underbar a)$
if  $\phi^\XX$ holds after $\underbar a$ 
has been substituted for
$\underbar x$.
We define $\Th(\XX)$ as the set of all $\LL$--sentences $\phi$ such that $\XX\models \phi$.

Let $p,q\ge0$, and let $\underbar b$ be a $q$--tuple of elements of $|\XX|$.
A subset $A$ of $X^p$ is \emph{definable (by $\phi$) with parameters $\underbar b=(b_1,\ldots,b_q)$}
if for some formula $\phi$ with $p+q$ free variables, the set $A$ coincides with the set
\[
\XX(\phi;\underbar b)
:=\{\underbar a\in |\XX|^p\co \phi(\underbar a,\underbar b)\}.
\]
If $q=0$ we simply say $A$ is \emph{definable}, in which case we denote the above set as $\XX(\phi)$.
We now formalize the concept of ``interpreting'' 
a new language.

\bd\label{d:interpret}
Let $\LL_1$ and $\LL_2$ be languages.
Suppose we have a class $\XX$ 
of ordered pairs in the form  $(X_1,X_2)$ with $X_i$ 
 being an  $\LL_i$--structure.
We say $X_2$  is \emph{interpretable in $X_1$ uniformly for $(X_1,X_2)$ in $\XX$} if 
there exist some $\LL_1$--formulae $\phi_{\mathrm{dom}}$ and $\phi_{\mathrm{eq}}$,
and there also exists a map $\alpha$ from the set of $\LL_2$--formulae to the set of $\LL_1$--formulae such that the following hold.
\begin{quote}
for each $(X_1,X_2)\in\XX$,
we have a surjection
\[
\rho\co X_1(\phi_{\on{dom}}) \longrightarrow |X_2|\]
with its fiber uniformly defined by $\phi_{\on{eq}}$ in the sense that
\[X_1(\phi_{\on{eq}})=\{(x,y)\in X_1(\phi_{\on{dom}})\times X_1(\phi_{\on{dom}})\mid \rho(x)=\rho(y)\}.\]
Furthermore, it is required  
for each $\LL_2$--formula $\psi$
that
\[
\rho^{-1}(X_2(\psi))=X_1(\alpha(\psi)).
\]    
\end{quote}
The bijection 
\[
\rho^{-1}\co |X_2|\longrightarrow X_1(\phi_{\on{dom}})/X_1(\phi_{\on{eq}})\]
along with the map $\alpha$ is called a \emph{uniform interpretation} of $X_2$ in $X_1$.
\ed

\begin{rem}\label{rem:interpret}
\be
\item\label{p:interpret-rem}
In the above, if $\psi$ is $m$--ary (as a relation) and $\phi_{\on{dom}}$ is $n$--ary, then $\alpha(\psi)$ is $mn$--ary.
In practice, we only need to consider relation symbols 
(in a broad sense, including function and constant symbols)
$\psi$ rather than all possible $\LL_2$--formulae.
\item\label{p:interpret-rho} In various instances of this paper, it will be the case that $\LL_1\sse \LL_2$
and that the interpretation restricts to the identity on $\LL_1$. 
As a consequence of such interpretability, we will have that $\Th(X_2)$ is a conservative extension of $\Th(X_1)$ for each $(X_1,X_2)\in\XX$.
Also, we will often add a function symbol in $\LL_2$ corresponding to the surjection $\rho$, which is clearly justified.
\ee
\end{rem}

The following lemma explains how the combination of Theorems~\ref{thm:interpret}
and~\ref{thm:iamM} implies Theorem~\ref{thm:main}.
\begin{lem}\label{lem:inter-models}
Suppose $\LL_1,\LL_2$ and $\XX$ are as in Definition~\ref{d:interpret} so that $X_2$ is  interpretable in $X_1$ uniformly for $(X_1,X_2)\in\XX$.
Let $(X_1,X_2)$ and $(Y_1,Y_2)$ be in $\XX$.
Then for each sentence $\psi$ belonging to $\Th(X_2)\setminus \Th(Y_2)$, the interpretation $\alpha(\psi)$ belongs to $\Th(X_1)\setminus \Th(Y_1)$.
In particular,  if $X_1\equiv Y_1$, then $X_2\equiv Y_2$.
\end{lem}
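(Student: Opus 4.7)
The plan is to derive the conclusion directly from the defining clause of uniform interpretation in Definition~\ref{d:interpret}, specialised to the case of sentences (i.e.\ formulae with no free variables). The key observation is that when $\psi$ is a $0$-ary $\LL_2$-formula, Remark~\ref{rem:interpret}(\ref{p:interpret-rem}) gives that $\alpha(\psi)$ is likewise $0$-ary (taking $m=0$ in the formula $mn$ for the arity of $\alpha(\psi)$), so $\alpha(\psi)$ is an $\LL_1$-sentence. Thus $\alpha$ sends $\LL_2$-sentences to $\LL_1$-sentences, and it suffices to verify that truth is preserved.

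For any $\LL$-structure $Z$ and any $\LL$-sentence $\chi$, the set $Z(\chi)$ of $0$-tuples satisfying $\chi$ equals either $\{()\}$ or $\varnothing$, depending on whether $Z\models \chi$; moreover $Z\models\chi$ if and only if $Z(\chi)=\{()\}$. Applying this observation to both $(X_1,X_2)$ and $(Y_1,Y_2)$, the defining equality
\[
\rho^{-1}(X_2(\psi))=X_1(\alpha(\psi))
\]
from Definition~\ref{d:interpret}, together with the fact that the surjection $\rho$ sends the (unique) empty tuple in the $0$-th power of its domain to the empty tuple in $|X_2|^0$, implies that $X_1(\alpha(\psi))=\{()\}$ precisely when $X_2(\psi)=\{()\}$; in other words,
\[
X_1\models \alpha(\psi)\quad\Longleftrightarrow\quad X_2\models\psi,
\]
and the analogous equivalence holds for $(Y_1,Y_2)$ with the same translation $\alpha$, since $\alpha$ is by hypothesis uniform over $\XX$.

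Combining the two equivalences, if $\psi\in\Th(X_2)\setminus\Th(Y_2)$ then $X_1\models\alpha(\psi)$ while $Y_1\not\models\alpha(\psi)$, giving $\alpha(\psi)\in\Th(X_1)\setminus\Th(Y_1)$, which is the first assertion. The contrapositive of this is: if $\Th(X_1)=\Th(Y_1)$, then no sentence can lie in the symmetric difference $\Th(X_2)\triangle\Th(Y_2)$, so $X_2\equiv Y_2$, which is the ``in particular'' clause. The proof involves no real obstacle; the only point requiring care is the bookkeeping of arities in Remark~\ref{rem:interpret}(\ref{p:interpret-rem}) to ensure that $\alpha$ restricts to a map from $\LL_2$-sentences to $\LL_1$-sentences, and the elementary identification of $0$-ary definable sets with truth values.
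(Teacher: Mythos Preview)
The paper states Lemma~\ref{lem:inter-models} without proof, treating it as a standard consequence of Definition~\ref{d:interpret}. Your argument is correct and supplies exactly the elementary verification the paper omits; the bookkeeping via Remark~\ref{rem:interpret}(\ref{p:interpret-rem}) and the identification of $0$-ary definable sets with truth values is the natural way to spell it out.
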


\section{The AGAPE structure and basic observations}\label{sec:sorts}  
The fundamental universe that we work in will be the group of homeomorphisms of a manifold. 
Objects such as regular open sets, real numbers, points in the manifold, continuous functions, etc.~will all be constructed as definable equivalence classes of definable subsets of finite tuples of homeomorphisms.

\subsection{The langauge \texorpdfstring{$L_{\agape}$}{L-AGAPE} and the structure \texorpdfstring{$\agape(M,G)$}{AGAPE(M,G)}}
The ultimate language we will work in will be called $\agape$, which stands for ``Action of a Group, Analysis, Points and Exponentiation''.
This language is denoted as $L_{\agape}$ and contains the following different sort symbols for $k,\ell\in\omega$:
\[\bG,\ro,\bN,\PP(\bN),\bR,\bM,\bM^{\on{disc-int}},\cont_{k,\ell}.\]
The above sorts come with some symbols that are intrinsic to the sort (such as a group operation),
and others which relate the sorts to each other,
 as we spell out below. 
There will be a countable set of variables for each sort, as is typically required.
We also describe an $\agape$ structure assigned to each pair $(M,G)$ in the class $\cM$ or $\cM_{\vol}$.
In this structure,
we give the ``intended'' choice of the {domain of each sort symbol.}

{\bf The group sort.} 
The domain of the sort symbol $\bG$ will be the group $G$, under our standing assumption that $(M,G)\in\cM_{(\vol)}$. 
The signatures only relevant for this sort are 
\[1,\circ,{}^{-1},\]
which are respectively assigned with the natural meanings in the group theory.
These symbols, along with variables, form the \emph{language of groups} $L^0_{\act}=L_\bG$. The group $G$ is regarded an $L_{\bG}$-structure $\act^0(M,G)=\act_{\bG}(M,G)$. We will usually not write
the $\circ$ symbol.

{\bf The sort of regular open sets.} 
The domain of the sort symbol $\ro$ is the set $\ro(M)$ of the regular open sets in $M$.
The newly introduced signatures for this sort are
\[
\sse,\cap,\,^{\perp},\oplus,\varnothing,\bM,\suppe,\appl.
\]
The symbol $\bM$ means the manifold $M$ in the structure.
By the natural assignment as before, we have Boolean symbols \[\sse,\cap,\oplus,\,^\perp,\bM, \varnothing\]
for the Boolean algebra $\ro(M)$.
We let the function symbol $\suppe$ mean the map $G\longrightarrow \ro(M)$ defined as
\[g\mapsto \suppe g.\]
We have an assignment for $\appl$ so that \[\appl(g,U)=g(U)\] with $g\in G$ and $U\in\ro(M)$.
The symbols introduce so far (along with countably many variables for each sort) form the
\emph{language of a group action on a Boolean algebra}
$L^1_{\act}=L_{\mathbf{G},\ro}$.
The $L^1_{\act}$--structure described above on the universe $G\sqcup\ro(M)$ is denoted as $\act^1(M,G)=\act_{\bf G,\ro}(M,G)$.

{\bf The sorts from the analysis}
We then introduce new sort symbols, which are $\bN, \PP(\bN),\bR$ and $\cong_{k,\ell}$ for $k,\ell\in\omega$.
The signatures introduced here are
\[
0,1,<,+,\times,\in,\subseteq,\#\pi_0,\on{norm}.\]
Standard second order arithmetic
\[\on{Arith}_2=(\bN,\PP (\bN),0,1,<,+,\times,\in,\subseteq)\] 
is given the sort symbols $\bN$ and $\PP(\bN)$, as well as with relevant non-logical symbols.
We note the ambiguity of our notation that the sort symbols $\bN$ and $\PP(\bN)$ will be assigned with the set of the natural numbers $\bN$ and its power set $\PP(\bN)$, respectively.
The symbol $\#\pi_0$ is interpreted so that
\[\#\pi_0(U)=k\]
means $U\in\ro(M)$ has $k$ connected components.
See Section~\ref{sec:arith} for details. 
The ordered ring of the real numbers
\[\{0,1,+,\times,<,=\}\]
is assigned with the sort symbol $\bR$ and the signatures above. 
Note that, 
as is usual, $\bN$ is considered as a subsort of $\bR$, by identifying each integer as a real number.

The domain of the sort symbol $\cont_{k,\ell}$ will be the set $C(\bR^k,\bR^\ell)$ of continuous functions.
We also have a formula
$\appl(f,x)=y$ when the sort value of $f$ is $\cont_{k,\ell}$,
and when $x$ and $y$ are tuples of variables assigned with the sort symbol $\bR$.
We have the $C^0$--norm \[f\mapsto \on{norm}(f):=\|f\|,\] which will be also a part of the language.
Combining these symbols with $L^1_{\act}$, we obtain the language 
$L^2_{\act}=L_{\bG,\ro,\bR}$.
An $L^2_{\act}$--structure
$\act^2(M,G)=\act_{\bG,\ro,\bR}$ is assigned to  each $(M,G)\in\cM_{(\vol)}$ having the universe 
\[G \sqcup\ro(M)\sqcup\PP(\bN)\sqcup\bR\sqcup \bigsqcup_{k,\ell}C(\bR^k,\bR^\ell).\]

{\bf The point and the discrete subset sorts $\bM$ and $\bM^{\on{disc-int}}$.}
The domain of the sort symbol $\bM$ will be the set of the points in a manifold.
We also introduce the sort symbol $\bM^{\on{disc-int}}$ to mean a subset $A$ of $\inte M$ every point of which is isolated in $A$.
By abuse of notation, the symbols $\in$ and $\sse$ introduced above will have multiple meanings (depending on the context), so that they have the arity values $(\bM,\ro)$, $(\bM,\bM^{\on{disc-int}})$ and $(\bM^{\on{disc-int}},\ro)$.

We also have a cardinality function
\[
\# A=m\]
meaning that the cardinality of $A\sse \inte M$ is $m$, assuming that every point in $A$ is isolated.

The interpretation of points of the manifold will allow us to include symbols such as $\cl$ and $\fr$, the closure and frontier of a regular open set, together with membership relations into these sets. These symbols will simply be abbreviations for formulae which impose the intended meaning.
We will be able to separate out boundary points of $M$ from the interior ones, and hence justified to use the notations
\[
\pi\in\partial \bM, \pi'\in\inte \bM\]
for point sort variables $\pi$ and $\pi'$.
The function symbol $\appl$ has a natural additional meaning as below:
\[\appl\colon G\times M\longrightarrow M.\]
In all contexts, we abbreviate $\appl(\gamma,x)$ by $\gamma(x)$
when the sort of $\gamma$ is either $\bG$ or $\cont_{k,\ell}$
and when the sort of $x$ is (tuples of) $\bR$, $\bM,\bM^{\on{disc-int}}$ or $\ro$.

The omnibus language, combining all of the previous sorts and relevant symbols, is denoted by \[L^3_{\act}=L_{\bG,\ro,\bR,\bM}=L_{\agape},\] or simply as $\agape$.
We have so far described the $L_{\agape}$--structure $\act^3(M,G)=\agape(M,G)$ corresponding to $(M,G)\in\cM_{(\vol)}$.

Dealing with these structures, we  often make use of functions or relations defined by fixed formulae that are not explicitly specified. The following terminology will be handy when we need to avoid  ambiguity in such situations: 

\bd\label{defn:unif-def}
Let $i=0,1,2,3$,
and let  $\phi_{(\vol)}$ be a formula in $L^i_{\act}$.
Suppose for each  $(M,G)\in\cM_{(\vol)}$ that a function or relation $f_{M,G}$  is defined by $\phi$ in $\act^i(M,G)$.
Then the collection 
\[
\{f_{M,G}\mid (M,G)\in\cM_{(\vol)}\}\]
is said to be \emph{uniformly defined over $\cM_{(\vol)}$}.
\ed

\begin{rem}\label{rem:notation}
In dealing with the sorts in Subsection~\ref{ss:model theory}, we will distinguish notationally between variables referring to a particular sort and elements of that sort. For the convenience of the reader, we will record a table summarizing the notation. In general, we will write an underline to denote an arbitrary (or simply
unspecified) finite tuple of variables or objects.
    \begin{center}
    	\begin{tabular}{ | l | l | l | l | p{3cm} |}
    		\hline
    		Sort & variable & object  \\ \hline
    		Group elements & $\gamma$, $\delta$, $\underline\gamma$, $\underline\delta$ & $g$, $h$  \\ \hline
    		Regular open sets & $u$, $v$, $w$, $\underline u$, $\underline v$, $\underline w$ & $U$, $V$, $W$ \\ \hline
    		Natural numbers & $\alpha$, $\beta$, $\underline\alpha$, $\underline\beta$ & $k,m,n$\\ \hline
    		Sets of natural numbers & $\Lambda$, $\underline\Lambda$ & $A$  \\ \hline
    		Real numbers & $\rho$, $\sigma$, $\underline\rho$, $\underline\sigma$ & $r$, $s$ \\
    		\hline
    		Sets of points & $\pi$, $\underline\pi$, $\tau$, $\underline\tau$ & $p$, $q$, $T$ \\
    		\hline
    		Functions & $\chi$, $\theta$, $\underline\chi$, $\underline\theta$ & $f$ \\
    		\hline
    	\end{tabular}
    \end{center}
From now on, we will reserve the letters in this table for exclusive use as variables or objects of a particular sort, unless specified otherwise. 
In the ambient metalanguage,
we will use $i,j\in\omega$ to denote indices. The symbols $M$ and $N$ will be reserved for manifolds.
\end{rem}

\subsection{Interpreting action structures in homeomorphism groups}
Since the uniform interpretability (Definition~\ref{d:interpret}) is transitive, 
the following proposition would trivially imply Theorem~\ref{thm:interpret}.

\begin{prop}\label{prop:induct}
For each $i=0,1,2$, 
and uniformly for $(M,G)\in\cM_{(\vol)}$,
the $L^{i+1}_{\act}$--structure $\act^{i+1}(M,G)$ is interpretable in
the $L^{i}_{\act}$--structure $\act^{i}(M,G)$.
\end{prop}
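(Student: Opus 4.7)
The plan is to split the proposition into three separate claims, one for each value of $i \in \{0,1,2\}$, to be carried out in succession in the subsequent sections. By Remark~\ref{rem:interpret}~(\ref{p:interpret-rho}), at each step it suffices to interpret the \emph{new} sorts introduced at that level, together with the new signatures involving them, since the previously interpreted sorts carry over automatically.

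\textbf{Step 1 ($i=0$).} The only new sort in $L^1_\act$ is $\ro$. I would take $\phi_{\on{dom}}(\gamma) := (\gamma = \gamma)$ and use the surjection $\rho\colon G \to \ro(M),\ g \mapsto \suppe g$, which is onto by Proposition~\ref{prop:full-support}. The crux is to exhibit an $L^0_\act$--formula $\phi_{\on{eq}}(\gamma,\delta)$ expressing $\suppe\gamma = \suppe\delta$. Following the reconstruction strategy for locally moving (in fact locally dense) groups due to Rubin, I would first express ``$\suppe\gamma \cap \suppe\delta = \varnothing$'' by a commutator-and-conjugation condition that exploits local density, then bootstrap to inclusion (``every element whose extended support is disjoint from $\suppe\delta$ also has support disjoint from $\suppe\gamma$'' fails precisely when $\suppe\gamma \not\subseteq \suppe\delta$), and finally to equality. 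The lattice operations $\cap, \oplus, {}^\perp, \varnothing, \bM$ and the action $\appl(\gamma, u) = \gamma(u)$ (realized by conjugation on a representative, $\suppe(\gamma h \gamma^{-1})$) are then immediate.

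\textbf{Step 2 ($i=1$).} Here the new sorts are $\bN$, $\PP(\bN)$, $\bR$, and $\cont_{k,\ell}$. I would first verify that ``$u$ is connected'' is expressible in $L^1_\act$ (using Boolean operations and pseudo-closure conditions such as $u \subseteq v^\perp$, which encode $\cl u \cap \cl v = \varnothing$). Component counting then gives $\bN$ and the predicate $\#\pi_0$, and $\PP(\bN)$ is coded by countable disjoint families of regular open sets (available in abundance by the construction in the proof of Proposition~\ref{prop:full-support}). The reals $\bR$ can be coded as Dedekind cuts of rationals, and continuous functions $C(\bR^k,\bR^\ell)$ as their restrictions to a countable dense subset of $\bR^k$ together with a definable modulus of continuity; the application symbol and the $C^0$--norm $\on{norm}$ then follow. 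This is the main obstacle, since one must bridge a purely order-theoretic Boolean algebra equipped with a group action to classical second-order arithmetic and analysis, uniformly across all $(M,G) \in \cM_{(\vol)}$.

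\textbf{Step 3 ($i=2$).} The new sorts are $\bM$ and $\bM^{\on{disc-int}}$. With reals at hand, I would code a point $p \in M$ by a decreasing sequence of good open balls containing it whose diameters (now meaningful via $\bR$) tend to zero --- equivalently, by the maximal filter of regular open sets containing $p$ --- with point equality defined as equality of filters. The application $\appl\colon G \times M \to M$, the membership $p \in u$, and the distinction $\inte\bM$ versus $\partial\bM$ are then definable. Finally, $\bM^{\on{disc-int}}$ is coded by injections $\bN \to \bM$ whose images lie in $\inte M$ and are discrete (expressible via the metric now available from $\bR$), with the cardinality $\# A$ defined by locating the first natural number not in the image.
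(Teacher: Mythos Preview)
Your three-step decomposition matches the paper's, and your Step~1 is essentially the paper's argument (it cites Rubin's Expressibility Theorem directly rather than re-deriving the disjointness/inclusion formulae, but your sketch is in the same spirit).

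The genuine gap is in how you propose to encode \emph{infinite} objects by finite tuples of variables. In Step~2 you say ``$\PP(\bN)$ is coded by countable disjoint families of regular open sets,'' but in first-order logic you cannot quantify over a countable family; you must represent it by a fixed finite tuple. The paper's solution is a formula $\on{seq}(u,v,\gamma)$ asserting that the components of $u$ are linearly ordered by iterating the action of $\gamma$ on $v$, so that a single triple $(U,V,g)$ encodes an $\omega$-indexed sequence of regular open sets. An arbitrary subset of $\bN$ is then a union of components of such a $U$. Without a device of this kind, ``component counting gives $\bN$'' yields only first-order arithmetic (each individual $k$ is definable), not a uniform sort for $\bN$ together with $\PP(\bN)$.

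The same issue recurs more seriously in Step~3. You propose to code a point by a ``decreasing sequence of good open balls\ldots whose diameters tend to zero,'' but (i) a sequence of balls is again an infinite object with no encoding mechanism specified, and (ii) there is no metric in the $\agape$ language---having the sort $\bR$ does not give you a distance function on $M$, so ``diameter'' and ``the metric now available from $\bR$'' are not meaningful. The paper instead reuses the $\on{seq}$ trick: a point is represented by a triple $(U,V,g)$ whose ordered components converge to it, and the crucial convergence and equality-of-limits conditions are expressed purely through the group action (via ``limit stabilizer'' subgroups and a $\on{cofmove}$ predicate), never through a metric. Your filter description is second-order over $\ro(M)$ and does not reduce to a first-order condition as written.
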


The proof of this proposition will require the construction of 
 $L_{\act}^i$--formulae $\phi_{\on{dom}}^i$ and $\phi_{\on{eq}}^i$,
and a surjection
\[
\rho_i\co \act^{i}(M,G)(\phi_{\on{dom}}^i)\longrightarrow |\act^{i+1}(M,G)|\]
for all $(M,G)\in\cM_{(\vol)}$ satisfying the conditions of Definition~\ref{d:interpret}.
Our construction will occupy Sections~\ref{sec:arith} and~\ref{sec:points}, as well as most of this section.

    
 Rubin's Theorem~\cite{Rubin1989,Rubin1996} stated in the introduction
 can be used to prove various \emph{reconstruction theorems}, by which we mean that
    group isomorphism types greatly restrict the homeomorphism types of spaces on which groups
    can act nicely. See~\cite{KK2021book} for comprehensive references on this, especially regarding diffeomorphism groups.
    
    A key step in the proof of Rubin's theorem can be rephrased as follows. We emphasize that the formulae below are independent of the choice of the group $G$ or the space $X$. 

\begin{thm}[Rubin's Expressibility Theorem, {cf.~\cite{Rubin1996}}]\label{t:express}
There exist first order formulae \[ \subseteq(\gamma_1,\gamma_2),\quad \appl(\gamma_1,\gamma_2,\gamma_3),\quad\cap(\gamma_1,\gamma_2,\gamma_3), \quad\oplus(\gamma_1,\gamma_2,\gamma_3),\quad\perp(\gamma_1,\gamma_2)\] in the language of groups such that if $G$ be a locally moving group of homeomorphisms of a Hausdorff topological space $X$, then the following hold for all $g_1,g_2,g_3\in G$.
\be
\item
  	$\displaystyle
	G\models\sse(g_1,g_2)\Longleftrightarrow \suppe g_1 \sse \suppe g_2
    	\Longleftrightarrow g_1\in G[\suppe g_2]$.
    	\item
    	$\displaystyle
    	G\models\mathrm{appl} (g_1,g_2,g_3)\Longleftrightarrow \appl(g_1, \suppe g_2) =\suppe g_3$.
    	\item
    	$\displaystyle
    	G\models \cap(g_1,g_2,g_3)\Longleftrightarrow \suppe g_1 \cap \suppe g_2 =\suppe g_3$.
    	\item
    	$\displaystyle
    	G\models \oplus (g_1,g_2,g_3)\Longleftrightarrow \suppe g_1 \oplus \suppe g_2 =\suppe g_3$.
    	\item
    	$\displaystyle
    	G\models \ext(g_1,g_2)\Longleftrightarrow \suppe g_1 =(\suppe g_2)^{\perp}$.
    	\ee
    \end{thm}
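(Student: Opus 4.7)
The plan is to build up the five required formulae in stages, pivoting on a single auxiliary predicate that expresses disjointness of extended supports, namely $\delta(\gamma_1, \gamma_2)$ intended to hold if and only if $\suppe g_1 \cap \suppe g_2 = \varnothing$. Once $\delta$ is expressible group-theoretically, all five formulae in the statement will follow by translations from the Boolean structure of $\ro(X)$, together with the identity $g(\suppe h) = \suppe(ghg^{-1})$, which recasts the action on regular open sets as conjugation in $G$.

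For the disjointness predicate, one direction is immediate: if $\suppe g_1 \cap \suppe g_2 = \varnothing$ then $\suppe g_i$ sits inside $\inte \fix g_{3-i}$, so each $g_i$ restricts to the identity on the extended support of the other, and a direct check verifies $g_1 g_2 = g_2 g_1$. The converse fails (take $g_1 = g_2$), so commutation alone does not suffice. Following Rubin, I would impose a universal condition quantifying over auxiliary elements $h$ and requiring that prescribed iterated commutators among $g_1, g_2, h$ vanish or satisfy a prescribed pattern; the role of $h$ is to serve as a geometric witness that the actions of $g_1$ and $g_2$ take place in truly disjoint regions. The main obstacle, and the technical heart of the argument, lies in engineering these commutator identities so that they detect disjointness of extended supports without secretly referring to the order relation we are trying to define. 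The plan is to break this circularity by exploiting the density of extended supports in $\ro(X)$ afforded by local moving: any nonempty regular open set contains the extended support of some nontrivial group element, so quantification over $G$ can simulate quantification over regular open sets.

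With $\delta$ in hand I would extract the inclusion predicate from the characterization $\suppe g_1 \subseteq \suppe g_2$ iff $\forall h\,(\delta(h, g_2) \to \delta(h, g_1))$; the nontrivial direction again uses local moving to produce a nontrivial $h$ with $\suppe h \subseteq \suppe g_1 \cap (\suppe g_2)^\perp$ whenever that intersection is nonempty. Equality of extended supports is then two-way inclusion. The meet formula $\cap(g_1, g_2, g_3)$ will assert that $\suppe g_3 \subseteq \suppe g_1$, $\suppe g_3 \subseteq \suppe g_2$, and $\suppe g_3$ is maximal with this property, expressible by a bounded universal quantifier (using inclusion). The join $\oplus(g_1, g_2, g_3)$ is dual, asserting that $\suppe g_3$ contains both and is minimal among regular open sets with that property. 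The complement formula $\ext(g_1, g_2)$ combines $\delta(g_1, g_2)$ with the assertion that $\suppe g_1$ absorbs every extended support disjoint from $\suppe g_2$. Finally, $\appl(g_1, g_2, g_3)$ amounts to the equality $\suppe(g_1 g_2 g_1^{-1}) = \suppe g_3$, which is already expressible.

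The hardest part of the whole argument will therefore be constructing $\delta$: all the other formulae will follow by routine Boolean-algebra translations once disjointness is available, but encoding set-theoretic disjointness of extended supports through a finite combination of group operations requires the delicate iterated-commutator construction originating in Rubin's work, together with a careful exploitation of the local moving hypothesis to guarantee the existence of sufficient witness elements in each regular open region.
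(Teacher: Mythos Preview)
Your proposal is correct and matches the paper's approach: the paper cites Rubin's original work (and the exposition in \cite{KK2021book}) for parts (1) and (2), and then observes that (3)--(5) follow because the Boolean operations in $\ro(X)$ are first-order expressible once the inclusion relation is available. Your outline is in fact more detailed than the paper's own proof, which is essentially a citation; your reduction of everything to a disjointness predicate $\delta$, followed by the characterization $\suppe g_1 \subseteq \suppe g_2 \iff \forall h\,(\delta(h,g_2)\to\delta(h,g_1))$ and then the routine Boolean translations, is exactly the shape of Rubin's argument, and your derivation of $\appl$ via the identity $g_1(\suppe g_2)=\suppe(g_1 g_2 g_1^{-1})$ is the standard one.
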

\begin{proof}
Parts (1) and (2) are given as Theorem 2.5 in~\cite{Rubin1989}; see also~\cite[Corollary 3.6.9]{KK2021book} for a concrete formula. The remaining items are clear from the fact that the supremum in $\ro(M)$ is first order expressible in terms of the inclusion relation.
\end{proof}
Let $(M,G)\in\cM_{\vol}$.
By Proposition~\ref{prop:full-support}, we have a surjection 
\[\rho_0\co G\longrightarrow\ro(M)\] defined as $g\mapsto \suppe g$. 
Since $G$ is locally moving on $M$, 
Rubin's expressibility theorem implies that
the fiber \[\{(g,h)\mid \suppe g = \suppe h\}\]
of $\rho_0$ is definable,
and that the Boolean symbols and the function symbols $\appl$ and $\suppe$ have group theoretic interpretations; see also parts (\ref{p:interpret-rem}) and (\ref{p:interpret-rho}) of Remark~\ref{rem:interpret}.
We conclude the following, which shows that Proposition~\ref{prop:induct} holds for the case $i=0$.
\begin{cor}\label{cor:interp-action}
Uniformly for $(M,G)\in\cM_{(\vol)}$,
the $L_{\bf G,\ro}$--structure $\act_{\bf G,\ro}(M,G)$ is interpretable in 
the group structure $G$.
\end{cor}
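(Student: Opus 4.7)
The proof plan is to assemble a uniform interpretation directly from the two facts just established: Proposition~\ref{prop:full-support}, which shows that the map $\rho_0\co g\mapsto\suppe g$ surjects $G$ onto $\ro(M)$, and Rubin's Expressibility Theorem~\ref{t:express}, which provides group-theoretic formulae for the Boolean structure and for the action. Since the universe of $\act^1(M,G)$ is the two-sorted set $G\sqcup\ro(M)$, the strategy is to encode both sorts using pairs of group elements with one coordinate acting as a tag.

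Concretely, I would take $\phi_{\on{dom}}(\gamma_1,\gamma_2)$ to be $(\gamma_2=1)\vee(\gamma_1=1)$, with the convention that a pair $(g,1)$ with $g\ne 1$ represents the group-sort element $g\in G$, while a pair $(1,g)$ represents the $\ro$-sort element $\suppe g\in\ro(M)$; the induced map onto $G\sqcup\ro(M)$ is surjective by Proposition~\ref{prop:full-support}. The equivalence formula $\phi_{\on{eq}}$ asserts that two pairs share the same tag and either agree in the first coordinate (for the group sort) or satisfy $\suppe\gamma_2=\suppe\gamma_2'$ (for the $\ro$ sort); the latter is group-definable via bidirectional application of Rubin's $\sse$ formula.

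Translating the non-logical symbols of $L^1_{\act}$ is then mechanical. The group operations act on pairs of the group-sort form via the underlying multiplication in the first coordinate. The Boolean relations $\sse, \cap, \oplus, {}^\perp$ on $\ro$ are given by the corresponding Rubin formulae applied to the $\ro$-tag coordinates; the constants $\varnothing$ and $\bM$ are handled not by naming a fixed representative but by their order-theoretic characterization in the Boolean algebra $\ro(M)$ as the unique minimum and maximum, which avoids any $(M,G)$-dependent parameter. The cross-sort function $\suppe\co\bG\to\ro$ is implemented by tag-swapping, and the action $\appl\co\bG\times\ro\to\ro$ is translated directly via Rubin's $\appl$ formula. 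Uniformity over $\cM_{(\vol)}$ is then automatic because every formula used is independent of the choice of $(M,G)$. I expect the only genuine subtlety to lie in making the constant $\bM$ uniformly definable without a concrete group-theoretic representative, which is precisely what the order-theoretic description above resolves.
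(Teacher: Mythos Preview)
Your approach is essentially the paper's own: use Proposition~\ref{prop:full-support} to get the surjection $g\mapsto\suppe g$ onto $\ro(M)$, and use Theorem~\ref{t:express} to define the fibre and translate the Boolean and action symbols. The paper handles the two-sorted target more informally, invoking Remark~\ref{rem:interpret}(\ref{p:interpret-rho}) to say that the group sort is interpreted by the identity and only the new sort $\ro$ needs a genuine quotient construction; your explicit tagging with pairs is a perfectly standard way to make that precise within the single-$\phi_{\on{dom}}$ framework of Definition~\ref{d:interpret}.

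There is, however, a small but concrete bug in your tagging scheme. As written, the pair $(1,1)$ lies in your domain and, by your convention, represents $\suppe 1=\varnothing\in\ro(M)$; but then the identity element $1\in G$ has no preimage at all, since you explicitly restrict the group-sort reading $(g,1)\mapsto g$ to $g\neq 1$. Hence your $\rho$ is not surjective onto $G\sqcup\ro(M)$. The fix is routine: for instance, use triples $(\gamma_0,\gamma_1,\gamma_2)$ with $\gamma_0\in\{1,a\}$ for some fixed nontrivial $a$ as a genuine tag, or simply declare that $(g,1)$ represents $g\in G$ for \emph{all} $g$ and that $(1,g)$ with $g\neq 1$ represents $\suppe g$, handling $\varnothing$ separately via its order-theoretic characterization as the Boolean minimum (exactly as you already propose for $\bM$). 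Once this is repaired, your argument goes through and matches the paper's.
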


Corollary~\ref{cor:interp-action} can be summarized as saying that $ G$ interprets the group action structure of $ G$ on the algebra of regular open sets, in a way that preserves the meaning of $ G$.    This interpretation is uniform in the underlying pair $(M,G)$, and any formula in the language of $G$ and $\ro$ can be expressed entirely in $G$, since the formulae in Theorem~\ref{t:express} are independent of $M$. Henceforth, we will assume that we work in the expanded language $L_{\act}^1=L_{\bG,\ro}$.

\subsection{First order descriptions of basic topological properties}\label{ss:top}
Recall that whenever the expression $U\sqcup V$ is used it is assumed that $U$ and $V$ are disjoint.

We now produce first order expressions for some standard point-set--topological properties. 

\begin{lem}\label{lem:top-prop}
The following hold for $(M,G)\in\cM_{(\vol)}$.
\be
\item\label{p:full1}
For $U,V\in\ro(M)$, we have that $ G[U]= G[V]$ if and only if $U=V$.
\item\label{p:rigid-stab}
For each $U\in\ro(M)$, we have that
\[
G[U]=\{g\in G\mid g(V)=V\text{ for all regular open set }V\sse U^\perp\}.
\]
\item\label{p:pcnt}
An open subset is path-connected if and only if it is connected. 
\item\label{p:cnt1}
An arbitrary union of connected components of a regular open set is necessarily regular open.
More specifically, if a regular open set $W$ can be written as $W=U\sqcup V$ for some disjoint pair of open sets $U$ and $V$, then $U$ and $V$ are regular open and $W=U\oplus V$.
Moreover, we have $V=W\cap U^{\perp}$.
\item\label{p:group}
For disjoint pair $U,V$ of regular open sets, we have (i)$\Rightarrow$(ii)$\Rightarrow$(iii).
\be[(i)]
\item $U$ is connected, and $U\oplus V=U\sqcup V$;
\item Every $g\in  G[U\oplus V]$ satisfies either $g(U)=U$ or $g(U)\cap U=\varnothing$;
\item $U\oplus V=U\sqcup V$
\ee
\item\label{p:cnt2}
    	Let $W$ and $U$ are regular open sets such that $U$ is connected and such that $U\sse W$.
    	Then $U$ is a connected component of $W$
    	if and only if $W=U\oplus V$ for some regular open $V$ that is disjoint from $U$,
    	and every $g\in  G[W]$ satisfies either $g(U)=U$ or $g(U)\cap U=\varnothing$.
    	\item\label{p:cnt1-2} The following are all equivalent for a regular open set $W$.
    	\be[(i)]
    	\item  $W$ is disconnected;
    	\item $W=U\sqcup V$ for some disjoint pair of nonempty regular open sets $U$ and $V$ such that $U$ is connected;
    	\item $W=U\oplus V$ for some disjoint pair of nonempty regular open sets $U$ and $V$, and every $g\in  G[W]$ satisfies either $g(U)=U$ or $g(U)\cap U=\varnothing$;
    	\item $W=U\oplus V=U\sqcup V$ for some disjoint pair of nonempty regular open sets $U$ and $V$.\ee
    \item\label{p:full2}
    For two regular open subsets $U$ and $V$
    satisfying $U\cap V=\varnothing$,
    we have that
    $U\sqcup V= U\oplus V$
    if and only if
    each connected component of $U\oplus V$ is contained either in $U$ or in $V$.
    \ee
    \end{lem}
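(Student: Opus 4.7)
The plan is to dispatch the routine parts using Proposition~\ref{prop:full-support} and basic pointset topology, and to concentrate the real work on Part~(5). For Part~(1), if $U \ne V$ are regular open, then one of $U \setminus \cl V$ and $V \setminus \cl U$ must be nonempty open (else $\cl U = \cl V$ would force $U = V$), and Proposition~\ref{prop:full-support} supplies a $g$ whose extended support lies in that difference, separating $G[U]$ from $G[V]$. Part~(2) is direct: ``$\supseteq$'' holds since $\suppe g \subseteq U$ makes $g$ the identity on $U^\perp$, while if $\suppe g \not\subseteq U$ then $\suppe g \cap U^\perp$ is nonempty open and continuity of $g$ produces a small regular open $V \subseteq U^\perp$ with $g(V) \cap V = \varnothing$. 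Part~(3) is classical, since manifolds are locally path connected. For Part~(4), the arbitrary-union case reduces to the disjoint-pair case by splitting $W$ into the chosen union of components and the union of the rest; for the pair, if $W$ is regular open and $W = U \sqcup V$ with $U,V$ open disjoint, then $x \in \inte \cl U$ has a neighborhood in $\cl U \subseteq \cl W = W$, ruling out $x \in V$ by disjointness, so $\inte \cl U = U$, and the identities $W = U \oplus V$ and $V = W \cap U^\perp$ follow by direct substitution.

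The heart of the lemma is Part~(5). The implication (i)$\Rightarrow$(ii) is clean: any $g \in G[U \oplus V] = G[U \sqcup V]$ restricts to a self-homeomorphism of $U \sqcup V$ (fixing the complement pointwise) and hence permutes the connected components; since $U$ itself is a component (connected and clopen in $U \sqcup V$), the image $g(U)$ is either $U$ or a component of $V$, the latter being disjoint from $U$.

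The main obstacle is (ii)$\Rightarrow$(iii), which requires constructing a specific homeomorphism whenever $U \oplus V \supsetneq U \cup V$. Pick $x \in U \oplus V \setminus (U \cup V)$; a short check shows $x \in \partial U \cap \partial V$, so every neighborhood of $x$ in $M$ meets both $U \cap \inte M$ and $V \cap \inte M$ by density of $\inte M$. Choose a small connected neighborhood $N$ of $x$ in $M$ with $N \subseteq \inte \cl(U \cup V) = U \oplus V$ and with $N \cap \inte M$ path connected, and select distinct $p_1, p_2 \in U \cap N \cap \inte M$ together with $q \in V \cap N \cap \inte M$. Since $\dim M > 1$ (the one-dimensional case is already handled by Proposition~\ref{prop:dim1}), a path from $p_1$ to $q$ inside $N \cap \inte M$ can be arranged to avoid $p_2$; path transitivity from Lemma~\ref{lem:transitive}, applied with the open neighborhood $N \cap \inte M \setminus \{p_2\}$ of the path, produces $g \in G[U \oplus V]$ with $g(p_1) = q$ and $g(p_2) = p_2$. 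Then $q \in g(U) \setminus U$ forces $g(U) \ne U$, while $p_2 \in g(U) \cap U$ forces $g(U) \cap U \ne \varnothing$, contradicting (ii).

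Parts~(6),~(7),~(8) are formal consequences. For (6), if $U$ is a component of $W$, then Part~(4) yields $V = W \setminus U$ regular open with $W = U \sqcup V = U \oplus V$, and the argument of (i)$\Rightarrow$(ii) gives the orbit condition; conversely, the orbit condition together with $W = U \oplus V$ and $U$ connected implies via Part~(5) that $W = U \sqcup V$, making $U$ a connected clopen subset of $W$ and hence an entire component. Part~(7) cycles through (i)--(iv) by combining the component decomposition of Part~(4) with the two-way reformulation of Part~(5), each condition encoding that ``$W$ splits nontrivially as a clopen union''. Part~(8) records that $U \sqcup V = U \oplus V$ is exactly the condition that $\{U,V\}$ forms a clopen decomposition of $U \oplus V$, which by connectedness of components is equivalent to each component of $U \oplus V$ lying in $U$ or in $V$.
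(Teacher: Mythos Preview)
Your argument is correct and follows essentially the same route as the paper's: Part~(5)(ii)$\Rightarrow$(iii) is the crux in both, and your use of path--transitivity to move $p_1$ to $q$ while fixing $p_2$ is a minor variant of the paper's use of $2$--transitivity sending a pair $(p_2,p_3)\subseteq U$ to $(p_3,p_4)$ with $p_4\in V$. Two small slips to fix: in Part~(2) your inclusion labels are swapped (the argument ``$\suppe g\subseteq U$ makes $g$ the identity on $U^\perp$'' gives $\subseteq$, not $\supseteq$), and in Part~(4) the chain should read $\inte\cl U\subseteq\inte\cl W=W$ rather than $\cl W=W$.
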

    \begin{proof}
    (\ref{p:full1})
    If
    $x\in U\setminus V$, then 
    there exists some $h\in  G[U]$
    satisfying $h(x)\ne x$; see \cite[Lemma 3.2.3]{KK2021book} for instance.
    In particular, we have \[h\in  G[U]\setminus  G[V]\ne\varnothing.\] This proves the nontrivial part of the given implication.
We remark that the same statement holds without the assumption that $U$ and $V$ are regular open, under the extra hypothesis that $M\not\cong I$.
Part (\ref{p:rigid-stab}) is similar.
    
    (\ref{p:pcnt}) This part is clear from the fact that every manifold is locally path-connected.

    (\ref{p:cnt1})
    Whenever two open sets $U$ and $V$ are disjoint
    we have that $U^{\perp\perp}$ and $V^{\perp\perp}$ are also disjoint; see~\cite[Lemma 3.6.4 (4)]{KK2021book}, for instance.
    From
    \[
    W=U\sqcup V\sse U^{\perp\perp}\sqcup V^{\perp\perp}\sse U^{\perp\perp}\oplus  V^{\perp\perp} \sse W^{\perp\perp}=W,\]
    we see that $U$ and $V$ are actually regular open and $W=U\oplus V=U\sqcup V$. It is clear that  $V\cap \fr U=\varnothing$, 
    which implies $V= W\cap U^{\perp}$.

    (\ref{p:group}) The implication (i)$\Rightarrow$(ii) is clear from that every setwise stabilizer of $g\in \cH[U\oplus V]$ permutes connected components of $U\oplus V$.
    
    For the implication (ii)$\Rightarrow$(iii), assume we have a point
    \[
    p_1\in
    (U\oplus  V)\setminus (U\cup V).\]
    Take a sufficiently small open ball $B$ around $p_1$ so that
    \[
    B\sse U\oplus V
    =
    \inte\cl (U\cup V)\sse\cl U\cup \cl V.\]
    Note also that because \[p_1\not\in U=\inte \cl U,\] it follows that $B\not\sse\cl U$. Similarly, $B\not\sse\cl V$.
    This implies that we can choose distinct points
    \[p_2,p_3\in B\cap U\cap\inte M\]
    and $p_4\in B\cap V\cap\inte M$.
Since $G$ is $k$--transitive on $B\cap \inte M$ for all $k$,
we can find a $g\in G$ supported in $B$ satisfying
    $g(p_2)=p_3$ and $g(p_3)=p_4$;
    see also  Lemma~\ref{lem:transitive}.
    Then $g(U)$ is neither $U$ nor disjoint from $U$.
    
    Parts (\ref{p:cnt2}) and (\ref{p:cnt1-2}) are clear from the previous parts.

    (\ref{p:full2}) The forward direction comes from the observation that $(U,V)$ is a disconnection of $U\oplus V$. The  backward direction is trivial since the hypothesis implies that $U\oplus V\sse U\cup V$.
    \end{proof}
    
Let us note the following consequences of Lemma~\ref{lem:top-prop}.

\begin{cor}\label{cor:basic-form}
There exist first order formulae in the language $L_{\bf{G},\ro}$ as follows:
\begin{enumerate}
\item\label{p:contained} A formula $\on{contained}(\gamma,u)$, also abbreviated as $\gamma\in\bG[u]$
such that 
\[
\models\on{contained}(g,U)
\quad\textrm{if and only if }\suppe g\sse U.\]
\item\label{p:conn}
A formula $\on{conn}(u)$ such that \[\models \on{conn}(U)\quad\textrm{if and only if $U$ is connected.}\]
\item\label{p:cc}
    	A formula $\on{cc}(u,v)$, also abbreviated as $u\in\pi_0(v)$
     such that \[\models \on{cc}(U,V)\quad\textrm{if and only if $U$ is a connected component of $V$.}\]
\item\label{p:ucc}
    	A formula $\on{ucc}(u,v)$ such that \[\models \on{ucc}(U,V)\quad\textrm{if and only if }U\textrm{ is a union of connected component of }V.\]
\item\label{p:cck}
For all $k\in\omega$, a formula $\#\on{cc}_k(u)$ such that \[\models \#\on{cc}_k(U)\quad\textrm{if and only if $U$ has exactly $k$ connected components.}\]
\item\label{p:disj} A formula $\on{disj}(u,v)$ such that
\[
\models \on{disj}(U,V)\textrm{ if and only if }U\oplus V=U\sqcup V.\]
\item\label{p:ccpartition}
A formula $\on{ccpartition}(u,v,w)$ such that
\begin{align*}
\models \on{ccpartition}(U,V,W)&\quad\text{if and only if }
\on{ucc}(U,W)\wedge\on{ucc}(V,W)\wedge W=U\sqcup V.
\end{align*}
\item\label{p:cnteq}
A formula $\#_=(u,v)$ such that
for all regular open sets $U$ and $V$ having finitely many connected components, 
we have
\[\models \#_=(U,V)\] if and only if $U$ and $V$ have the same number of connected components.
\end{enumerate}
    \end{cor}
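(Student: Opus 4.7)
The plan is to express each desired property as an $L_{\bG,\ro}$--formula by directly translating the characterizations assembled in Lemma~\ref{lem:top-prop}, and then to bootstrap each part from the preceding ones. Part (\ref{p:contained}) is the atomic formula $\suppe(\gamma)\sse u$. Part (\ref{p:conn}) is $\neg\on{disconn}(u)$, where $\on{disconn}$ transcribes clause (iii) of Lemma~\ref{lem:top-prop}(\ref{p:cnt1-2}): there exist nonempty disjoint regular open $v_1,v_2$ with $u=v_1\oplus v_2$ such that every $\gamma\in \bG[u]$ either fixes $v_1$ setwise or maps it entirely off itself. With $\on{conn}$ available, part (\ref{p:cc}) is the direct transcription of Lemma~\ref{lem:top-prop}(\ref{p:cnt2}), and part (\ref{p:disj}) follows from Lemma~\ref{lem:top-prop}(\ref{p:full2}): $\on{disj}(u,v)$ is $u\cap v=\varnothing$ together with the assertion that every component of $u\oplus v$ (quantified using $\on{cc}$) lies in $u$ or in $v$. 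The remaining parts then fall out by routine combinations: $\on{ucc}(u,v)$ is $u\sse v$ together with ``every component of $v$ lies in $u$ or is disjoint from $u$''; $\#\on{cc}_k$ is a scheme asserting the existence of exactly $k$ distinct components exhausting $u$; and $\on{ccpartition}$ is the conjunction of these with $w = u\oplus v$ and $\on{disj}(u,v)$.

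The main obstacle is part (\ref{p:cnteq}), where we must express $|\pi_0(u)|=|\pi_0(v)|$ without access to arithmetic at this point in the paper. The idea is to encode a bijection between $\pi_0(u)$ and $\pi_0(v)$ by a regular open ``matching'' $w$: define $\#_=(u,v)$ to assert the existence of $w$ with $u\sse w$ and $v\sse w$ such that every connected component of $w$ contains exactly one component of $u$ and exactly one component of $v$. This is expressible in $L_{\bG,\ro}$ using $\on{cc}$ and the first-order $\exists!$ idiom. The backward implication is a counting argument: the containment relations induce bijections $\pi_0(u)\leftrightarrow\pi_0(w)\leftrightarrow\pi_0(v)$, so if $u$ and $v$ have finitely many components then their counts agree.

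The nontrivial direction is the forward one: given $u$ and $v$ each with $k$ components $U_1,\dots,U_k$ and $V_1,\dots,V_k$, we construct $W=W_1\sqcup\cdots\sqcup W_k$ where each $W_i$ is a connected regular open neighborhood of $U_i\cup V_i$, disjoint from $U_j\cup V_j$ for $j\neq i$, and with pairwise separated closures so that $W$ is regular open with exactly these $W_i$ as components. Under the dimension assumption $\dim M\ge 2$ (cf.\ the remark closing Section~\ref{ss:top}) and the connectedness of $M$, we may choose pairwise disjoint embedded paths $\gamma_i$ from a point of $U_i$ to a point of $V_i$, each avoiding the other $U_j\cup V_j$. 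Taking sufficiently small tubular neighborhoods $T_i$ of $\gamma_i$ with closures pairwise disjoint and disjoint from the other $U_j\cup V_j$, we set $W_i:=\inte\cl(U_i\cup V_i\cup T_i)$; this is a regular open connected thickening of the path-connected open set $U_i\cup V_i\cup T_i$, and the separation conditions ensure that $W=W_1\sqcup\cdots\sqcup W_k$ provides the required matching.
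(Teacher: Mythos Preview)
Your treatment of parts~(\ref{p:contained})--(\ref{p:ccpartition}) is correct and essentially matches the paper's approach.

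For part~(\ref{p:cnteq}), however, your proposed formula is \emph{not} equivalent to $|\pi_0(U)|=|\pi_0(V)|$; the forward implication fails, and not merely your construction of a witness. Take a large good open ball $U_1\subseteq\inte M$ containing two small disjoint good balls $V_1,V_2$, and a further small ball $U_2$ with $\cl U_2\cap\cl U_1=\varnothing$. Set $U:=U_1\sqcup U_2$ and $V:=V_1\sqcup V_2$; both are regular open with exactly two components. For \emph{any} regular open $W\supseteq U\cup V$, the connected component of $W$ containing $U_1$ already contains $V_1$ and $V_2$ (since $V_1,V_2\subseteq U_1$), so no $W$ can have each of its components meet exactly one component of $V$. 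Thus your formula is false on $(U,V)$ although the component counts agree. Your path-and-tube construction silently assumes the components of $U$ and $V$ can be spatially separated, which is precisely what breaks here; even without overlaps, a component of $U$ may disconnect $\inte M$ and obstruct the required disjoint arcs.

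The paper avoids this by \emph{shrinking} rather than enlarging: one chooses $u'\subseteq u$ meeting each component of $u$ in a single nonempty connected piece (e.g.~a small good ball), and similarly $v'\subseteq v$, and then asks for some $\gamma\in G$ with $\gamma(u')=v'$. Transitivity of $G$ on finite tuples of small good balls (of equal measure, in the volume case) from Lemma~\ref{lem:transitive} supplies such a $\gamma$ when the counts agree, while conversely any such $\gamma$ induces a bijection $\pi_0(u')\to\pi_0(v')$, hence $\pi_0(u)\to\pi_0(v)$.
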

\begin{proof}
The existence of the formula $\on{contained}(\gamma,u)$ is trivial since $\supp^e$ and $\sse$ belong to the signature of $L_{\bG,\ro}$. The formulae $\on{conn}(u)$ and $\on{cc}(u,v)$ exist by parts (\ref{p:cnt2}) and (\ref{p:cnt1-2}) of Lemma~\ref{lem:top-prop}. We can then set \[\on{ucc}(u',u)\equiv(\forall w)[\on{cc}(w,u')\rightarrow \on{cc}(w,u)].\]
The construction of the formulae $\on{cc}_k(u)$ and $\on{disj}(u,v)$ follows from the same lemma, which also implies that the formula
\[
\on{ccpartition}(u,v,w)\equiv [\on{ucc}(u,w)\wedge \on{ucc}(v,w)\wedge w=u\oplus v\wedge u\cap v=\varnothing]\]
has the meaning required in part~(\ref{p:ccpartition}).
Finally, we set
\begin{align*}
    \#_=(u,v)\equiv &(\exists u'\sse u,\exists v'\sse v)
    [
    (\forall \hat u\in\pi_0(u))[\on{conn}(u'\cap\hat u)
    \wedge u'\cap\hat u\neq\varnothing]
    \wedge\\
    &(\forall \hat v\in\pi_0(v))[\on{conn}(v'\cap\hat v)
    \wedge v'\cap\hat v\neq\varnothing]
    \wedge
    (\exists\gamma)[\gamma(u')=v']].
    \end{align*}
From the transitivity on good balls (of equal measure, in the measure preserving case) as in Lemma~\ref{lem:transitive}, we see $\#_=(u,v)$ has the intended meaning.
\end{proof}

Using the above formula, we can distinguish the case that $\dim M=1$ among all compact connected manifolds. 
\begin{cor}\label{cor:dim1}
For each compact connected one--manifold $M$, 
there exist $L_{\bf G,\ro}$--formulae $\phi_M$ such that when $(N,H)\in\cM$,
 we have that \[ \act_{\bf G,\ro}(N,H)\models \phi_M \] if and only if $N$ and $M$ are homeomorphic.
\end{cor}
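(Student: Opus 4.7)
My plan is to build two auxiliary $L_{\bG,\ro}$-sentences $\psi_1,\psi_2$ encoding respectively a ``locally supported 3-cycle'' and a ``global 3-cycle'' on triples of disjoint connected regular open sets, and then define
\[
\phi_I := \neg\psi_1 \wedge \neg\psi_2,
\qquad
\phi_{S^1} := \neg\psi_1 \wedge \psi_2.
\]
Concretely, $\psi_1$ will assert the existence of a connected $u\in\ro$ with $u\ne\bM$, three pairwise disjoint (in the sense of the formula $\on{disj}$) nonempty connected regular open sets $v_1,v_2,v_3\sse u$, and a $\gamma\in\bG[u]$ satisfying $\gamma(v_1)=v_2\wedge\gamma(v_2)=v_3\wedge\gamma(v_3)=v_1$. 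All of these conditions are available in $L_{\bG,\ro}$ via Corollary~\ref{cor:basic-form} together with the built-in symbols $\suppe,\appl,\sse$. The sentence $\psi_2$ is obtained from $\psi_1$ by dropping the clause $u\ne\bM$, i.e.\ it simply asserts the existence of a cyclic three-element permutation of disjoint nonempty connected regular open sets by a global element of $\bG$.

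The key claim is that $\psi_1$ holds in $\act_{\bG,\ro}(N,H)$ if and only if $\dim N\ge 2$. If $\dim N=1$, so $N\in\{I,S^1\}$, any proper connected regular open $u\subsetneq N$ is an open arc with two endpoints $p,q\in N$; every $h\in H[u]$ fixes $N\setminus u$ pointwise and in particular fixes $\{p,q\}$, so its restriction to the closed arc $\cl u$ is an orientation-preserving self-homeomorphism. Three disjoint sub-arcs of $u$ are then linearly ordered and cannot be cyclically permuted. For the converse, when $\dim N\ge 2$ I would take $u$ to be a small collared open ball in $\inte N$ (necessarily proper) and three disjoint collared balls $v_i\sse u$, and apply the $k$-transitivity from Lemma~\ref{lem:transitive}(\ref{p:trans-path}) inside the connected manifold $u$ to produce $\gamma\in H[u]$ realizing the desired 3-cycle.

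The analogous claim for $\psi_2$ is that it holds in $N$ if and only if $N\not\cong I$. In $N=S^1$, a rotation by $2\pi/3$ lies in $\Homeo_0(S^1)\le H$ and cyclically permutes three equally spaced arcs; in $\dim N\ge 2$ the formula $\psi_2$ is already implied by $\psi_1$. In $N=I$, any three disjoint arcs $v_1,v_2,v_3\sse I$ are linearly ordered and every homeomorphism of $I$ either preserves or reverses this linear order, so the only permutations of $\{v_1,v_2,v_3\}$ induced by $H$ are the identity and the order-reversing involution, neither of which is a 3-cycle. Combining both claims: $N\cong I$ iff both $\psi_1,\psi_2$ fail iff $\phi_I$ holds; $N\cong S^1$ iff $\neg\psi_1\wedge\psi_2$ holds iff $\phi_{S^1}$ holds; and $\dim N\ge 2$ iff $\psi_1$ holds, in which case neither $\phi_I$ nor $\phi_{S^1}$ holds.

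The only technically delicate point is the ``$\dim N\ge 2$ implies $\psi_1$'' direction, where the 3-cycle must be supported inside a \emph{proper} sub-ball rather than produced by the global transitivity of $H$ on $\inte N$. This is handled by noting that compactly supported homeomorphisms of the open ball $u\subsetneq\inte N$ extend by the identity to elements of $H[u]$, and that Lemma~\ref{lem:transitive}(\ref{p:trans-path}) applies to $u$ viewed as a connected manifold in its own right, yielding transitivity on ordered triples of disjoint collared balls inside $u$.
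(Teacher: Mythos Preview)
Your proof is correct and takes a genuinely different route from the paper's. The paper distinguishes $I$ and $S^1$ via \emph{separation} properties of regular open sets: $\phi_I$ asserts that among any three pairwise disjoint proper nonempty regular open sets, at least one has disconnected exterior (true in $I$ since some $\cl U_i$ misses $\partial I$ and hence $U_i^\perp$ separates the endpoints; false in all other compact connected manifolds), and $\phi_{S^1}$ is built from the analogous observation that any two disjoint proper nonempty regular open sets with $U\oplus V=U\sqcup V$ leave a disconnected complement. Your approach instead uses a \emph{dynamical} invariant, namely the (non)existence of a $3$--cycle on disjoint connected regular open sets realized by an element of $\bG[u]$ for a proper connected $u$, and the global version. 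Both approaches yield legitimate $L_{\bG,\ro}$--sentences; yours leans more directly on the group action, the paper's more on the Boolean structure of $\ro$.

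One small point of imprecision: your appeal to Lemma~\ref{lem:transitive}(\ref{p:trans-path}) for the forward direction ``$\dim N\ge 2\Rightarrow\psi_1$'' cites $k$--transitivity on \emph{points}, which does not by itself give a $3$--cycle on \emph{balls}, nor does that lemma literally apply to the noncompact pair $(u,H[u])$. The cleanest fix is the explicit one you hint at: take $u$ to be a Euclidean open ball in $\inte N$, place $v_1,v_2,v_3$ as three congruent small balls arranged with $2\pi/3$ rotational symmetry in a $2$--plane, and let $\gamma$ be the corresponding rotation tapered to the identity near $\partial u$; this is compactly supported in $u$, isotopic to the identity, and hence lies in $\Homeo_0(N)[u]\le H[u]$. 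Alternatively, invoke part~(\ref{p:transitive-measure}) of Lemma~\ref{lem:transitive} iteratively (moving one ball at a time while working in the connected complement of the already--placed balls). Either way your claim stands.
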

\begin{proof}
We let $\phi_I$ be the $L_{\bf G,\ro}$--formula expressing that for all pairwise disjoint, proper, nonempty regular open sets $U_1,U_2$ and $U_3$ the exterior of $U_i$ is disconnected for some $i$. This formula holds for $M\cong I$ since at least one of $\cl U_i$ does not intersect $\partial M$, and hence $U_i^\perp$ separates the two endpoints of $M$. It is clear that $\phi_I$ is never satisfied by other compact connected manifolds. 

We now suppose that $M$ is a compact connected manifold not homeomorphic to $I$. Then for all disjoint, proper, non-empty regular open sets $U$ and $V$ satisfying $U\oplus V=U\sqcup V$,
the set $M\setminus (U\oplus V)$ is disconnected. From Corollary~\ref{cor:basic-form}, we obtain the formula $\phi_{S^1}$ expressing that $M$ and $S^1$ are homeomorphic.
\end{proof}

By Proposition~\ref{prop:dim1} and Corollary~\ref{cor:dim1}, we establish Theorem~\ref{thm:main} for the case when $M$ is one--dimensional.
\emph{Henceforth, we modify the definitions of $\cM$ and $\cM_{\vol}$, replacing these classes by subclasses where all the manifolds in consideration are of dimension at least two.}

\subsection{Further topological properties}
We will need several more general first order formulae to express basic topological properties of regular open sets. One of primary importance will be a formula which implies that a particular regular open set $U$ is contained in a collared ball inside of another regular open set $V$. This is not particularly difficult to state and prove in the class $\cM$, but is substantially harder in $\cM_{\vol}$. For the rest of this section, we will make a standing assumption that $(M,G)\in\cM_{(\vol)}$, and that the underlying structure is $\act_{\bG,\ro}(M,G)$.
    
\subsubsection{Relative-compactness regarding good balls}\label{ss:more-topol}
We use the preceding results to find first order formulae that compare measures of regular open sets.
For the remainder of this subsection, we assume that $M$ is a connected, compact $n$--manifold with $n>1$, equipped with an Oxtoby--Ulam measure $\mu$.
    
\begin{lem}\label{lem:leq-fo}
There exists a formula $\vol_{\leq}(u_1,u_2,v)$ in the language $L_{\bf{G},\ro}$ such that 
for all $(M,G)\in\cM_{\vol}$ with an Oxtoby--Ulam measure $\mu$ on $M$, 
and for all any triple $(U_1,U_2,V)$ with $U_1$ and $U_2$ connected and $U_1,U_2\subseteq V$, we have the following:
    \begin{enumerate}
    	\item
    	If $\mu(U_1)\leq \mu(U_2)$ then $\vol_{\leq}(U_1,U_2,V)$ holds.
    	\item
    	If $\vol_{\leq}(U_1,U_2,V)$ holds then $\mu(U_1)\leq\mu((\cl U_2)\cap V)$.
    \end{enumerate}
    \end{lem}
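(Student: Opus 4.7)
The plan is to formalize $\mu(U_1)\le \mu(U_2)$ by asserting the existence of a Boolean partition of $U_1$ whose pieces can be pushed disjointly into $\cl U_2$ via elements of $G[V]$. Since each $g\in G$ preserves $\mu$ in the context of $\cM_{\vol}$, additivity of $\mu$ will then yield the bound $\mu(U_1)\le \mu(\cl U_2\cap V)$. Concretely, I will take $\vol_{\le}(u_1,u_2,v)$ to be the $L_{\bG,\ro}$--formula asserting the existence of pairwise disjoint regular opens $w_1,\ldots,w_k$ with $u_1=w_1\oplus\cdots\oplus w_k$, together with $\gamma_1,\ldots,\gamma_k\in\bG[v]$, such that the $\gamma_i(w_i)$ are pairwise disjoint and each satisfies $\gamma_i(w_i)\cap(v\cap u_2^{\perp})=\varnothing$. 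This is expressible in $L_{\bG,\ro}$ since $v\cap u_2^{\perp}$ is regular open, the function $\appl$ is in the signature, and the auxiliary notions from Corollary~\ref{cor:basic-form} are available; the existential quantifiers range over bounded tuples of group elements and regular opens.

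The backward direction of the lemma will then be immediate. From $g_i\in G[V]$ and $U_1\subseteq V$ one deduces $g_i(W_i)\subseteq V$, and combining with disjointness from $V\cap U_2^{\perp}$ gives $g_i(W_i)\subseteq V\cap\cl U_2$. Measure preservation of $G$ together with additivity of $\mu$ on the pairwise disjoint collection $\{g_i(W_i)\}$ yields
\[
\mu(U_1)=\sum_i\mu(W_i)=\sum_i\mu(g_i(W_i))\le \mu(V\cap\cl U_2),
\]
as required.

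For the forward direction I would decompose $U_1$ as a Boolean sum of finitely many good balls $B_1\oplus\cdots\oplus B_k=U_1$, using that $\cl U_1$ is compact and admits a finite cover by good balls. Iterating Lemma~\ref{lem:measure-ball} inside $U_2$ (whose moreover clause keeps the remainder connected after each removal) then produces pairwise disjoint good balls $B_i'\subseteq U_2$ with $\mu(B_i')=\mu(B_i)$; the measure budget stays positive at each step since $\sum_i\mu(B_i)=\mu(U_1)\le \mu(U_2)$ leaves strict slack coming from the frontiers of the $B_i$. Lemma~\ref{lem:transitive}(\ref{p:transitive-measure}), applied in the connected component of $V$ containing $B_i\cup B_i'$, will then furnish $g_i\in G[V]$ with $g_i(B_i)=B_i'$.

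The main obstacle is producing an honest Boolean partition of $U_1$ by good balls, since disjoint open balls omit their shared frontiers, which in general can carry positive $\mu$--mass. The natural fix is to allow each piece $W_i$ to be slightly more general than a single good ball (for instance a Boolean sum of abutting good balls, refined further inside $V$ by transitivity) while remaining topologically simple enough that Lemma~\ref{lem:transitive}(\ref{p:transitive-measure}) still supplies the transport $g_i$; the deliberate slack in the lemma's statement, with $\mu(\cl U_2\cap V)$ in place of $\mu(U_2)$ in the conclusion of the second implication, is precisely what permits us to absorb the missing frontier measure in both directions.
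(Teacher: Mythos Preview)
Your approach has two genuine gaps, one syntactic and one analytic, and both already bite before you reach the forward direction that you flag as the obstacle.

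\textbf{The formula is not first order in $L_{\bG,\ro}$.} You quantify over a tuple $w_1,\ldots,w_k$ and $\gamma_1,\ldots,\gamma_k$ with $k$ unspecified. At this point in the paper there is no arithmetic sort, so $k$ cannot be a variable: the formula must have a \emph{fixed} arity. If you fix some $k$, the forward direction requires that every connected regular open $U_1$ admit a Boolean partition into at most $k$ pieces each of which can be transported into $U_2$ by a single element of $G[V]$; there is no reason this should hold uniformly, since $U_1$ may have arbitrarily complicated topology relative to $U_2$.

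\textbf{The backward direction is not ``immediate''.} You assert $\mu(U_1)=\sum_i\mu(W_i)$, but for pairwise disjoint regular opens with $U_1=W_1\oplus\cdots\oplus W_k$ one only has $\bigcup_i W_i\subseteq U_1\subseteq\cl\bigl(\bigcup_i W_i\bigr)$, and the residual set $U_1\setminus\bigcup_i W_i$ can have positive $\mu$--measure (think of $W_1,W_2$ the two Jordan domains bounded by an Osgood curve of positive area inside a disk $U_1$). So your chain of inequalities gives only $\sum_i\mu(W_i)\le\mu(V\cap\cl U_2)$, which does not bound $\mu(U_1)$. The slack $\mu(\cl U_2\cap V)$ versus $\mu(U_2)$ in the statement is there to absorb frontier mass on the \emph{target} side, not on the source side.

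The paper sidesteps both problems by abandoning partitions entirely. Its formula has a $\forall\exists$ shape: for every nonempty test set $W_0$ there exists a \emph{single} $W_1\subseteq U_1$ (in practice the interior of one good ball) such that (i) the remainder $U_1\cap W_1^{\perp}$ is too small to contain any $G$--image of $W_0$, and (ii) some $\delta\in G[V]$ sends $W_1$ into $U_2$. Condition~(i) is a first-order surrogate for ``$\mu(U_1\cap W_1^{\perp})<\mu(W_0)$'' via Lemmas~\ref{lem:transitive} and~\ref{lem:measure-ball}, and letting $W_0$ range over arbitrarily small sets forces $W_1$ to exhaust the measure of $U_1$ in the limit. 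This approximation-by-one-ball scheme is what makes the formula genuinely first order and makes the measure inequality go the right way.
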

    \begin{proof}     
Suppose first that $\mu(U_1)\le \mu(U_2)$, and let $\varnothing\ne W_0\in\ro(M)$ be arbitrary.
By Lemma~\ref{lem:measure-ball}, we can find a good  ball $B\sse U_1$ such that
\[
\mu(U_1)-\mu(W_0)<\mu(B)<\mu(U_1)\le\mu(U_2),
\]
and such that $U_1\setminus B$ connected. 
Lemma~\ref{lem:transitive} furnishes  $g\in\Homeo_{0,\mu}(M)[V]$ such that $g(B)\sse U_2$, but clearly there is no $\mu$-preserving $h$ such that \[h(W_0)\subseteq U_1\setminus B=U_1\cap (\inte B)^\perp.\] 
We have just established that $\vol_{\leq}(U_1,U_2,V)$ holds with $W_1:=\inte B$, where
\begin{align*}
    \vol_{\leq}(u_1,u_2,v)\equiv&
    (\forall w_0\ne\varnothing,\exists w_1)[w_1\sse u_1\wedge 
    \on{conn}(u_1\cap w_1^{\perp})\wedge \\ &
    	(\forall \gamma\in  \bG)[
    	\gamma(w_0)\not\subseteq u_1\cap w_1^{\perp}]\wedge
      (\exists \delta\in  \bG[v])[\delta(w_1)\subseteq u_2]].
    \end{align*}

Let us now suppose for a contradiction that $\vol_{\leq}(U_1,U_2,V)$ holds but that \[\mu(U_1)>\mu(V\cap \cl U_2).\] Let $W_0$ be the interior of a good ball in $M$ with measure $r_0<\mu(U_1)-\mu(V\cap \cl U_2)$.
It suffices to show that there is no witness $W_1$ as required by $\vol_{\leq}$.
    
If such a $W_1$ exists then by the condition on $\gamma$, we see again from Lemmas~\ref{lem:transitive} and~\ref{lem:measure-ball} that $\mu(U_1\cap W_1^\perp)\le r_0$.
Moreover, there is a group element $g\in \cH[V]$ such that $g(W_1)\subseteq U_2$, so that in fact \[g((\cl W_1)\cap V)\subseteq (\cl U_2)\cap V.\] 
We then obtain
\[
r_0= \mu(W_0) \ge \mu(U_1\cap W_1^\perp) \ge \mu(U_1)-\mu(V\cap \cl W_1)
\ge \mu(U_1)-\mu(V\cap \cl U_2).
\]
This violates the choice of $r_0$.
    \end{proof}

    The foregoing discussion allows us to characterize when a regular open set $U$ is contained in a collared ball $B$ inside
    a regular open set $V$. There are separate formulae which apply in the measure-preserving case, and in the general case.

\begin{lem}\label{lem:rcb}
There exists a first order formula $\on{RCB}_{(\vol)}(u,v)$ such that for each $(M,G)\in\cM_{(\vol)}$, we have that \[\models \on{RCB}_{(\vol)}(U,V)\]  if and only if $U$ is relatively compact in some good ball contained in $V$.
\end{lem}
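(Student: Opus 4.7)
The plan is to define $\on{RCB}_{(\vol)}(u, v)$ via a nested chain of regular open sets $u \Subset w \Subset w' \Subset w'' \subseteq v$ together with first-order conditions forcing $\cl w'$ to be a good ball (with $w''$ playing the role of its collar). I proceed in two steps and then combine.

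\emph{Step 1: relative compactness.} I need to express the relation $\cl a \subseteq b$ within $L_{\bG, \ro}$. Since closures and frontiers are not directly accessible, I use a group-action surrogate. The primary criterion is: every $g \in G$ with $\suppe g \subseteq b^{\perp}$ fixes $a$ setwise, which by local density of $G$ and Proposition~\ref{prop:full-support} captures $a \subseteq \cl b$. To upgrade from $a \subseteq \cl b$ to $\cl a \subseteq b$ (i.e., to rule out $\cl a$ meeting $\fr b$), I augment with a buffer clause: there exists a regular open $c$ with $a \subseteq c$ and an $h \in G[b]$ satisfying $h(c) \cap c = \varnothing$ and $h(c) \subseteq b$, forcing $c$ to be strictly interior to $b$. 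This yields a formula $\on{Sub}(a, b)$.

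\emph{Step 2: good-ball characterization.} Using Lemma~\ref{lem:transitive}, I characterize ``$w'$ is the interior of a good ball with collar $w'' \subseteq v$'' by requiring $w'$ connected, $\on{Sub}(w', w'')$, $w'' \subseteq v$, and $G[w'']$ acts transitively on pairs of regular open subsets of $w'$ that are equivalent under the component-count relation $\#_=$ from Corollary~\ref{cor:basic-form}(\ref{p:cnteq}) in $\cM$, or under the volume comparison $\vol_{\leq}$ of Lemma~\ref{lem:leq-fo} in $\cM_{\vol}$. To eliminate circularity in referring to ``good ball,'' I add in $\cM$ an \emph{absorption} clause: there exists $g \in G[w'']$ with $g(\cl w') \subseteq w'$, giving a self-contraction of $w'$, which rules out non-ball topologies such as annuli and handle regions. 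In $\cM_{\vol}$, where measure-preserving contractions are impossible, this is replaced by a volume-aware substitute using $\vol_{\leq}$, together with a clause forcing $w'$ to have $\mu$-null frontier (i.e., $\mu(\cl w' \cap v) = \mu(w')$, rephrased first-order via iterated applications of $\vol_{\leq}$). Combining, I set
\[
\on{RCB}_{(\vol)}(u, v) \equiv \exists w, w', w''\,\bigl[\on{Sub}(u, w) \wedge \on{Sub}(w, w') \wedge \on{Sub}(w', w'') \wedge w'' \subseteq v \wedge \on{Ball}_{(\vol)}(w', w'')\bigr],
\]
where $\on{Ball}_{(\vol)}$ packages the conditions of Step 2.

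The main obstacle is the measure-preserving case. Characterizing good balls with $\mu$-null boundary using only the blunter comparison $\vol_{\leq}$ (which has built-in closure slack) is delicate, and the absorption replacement must be carefully tuned to avoid both false positives (non-ball regions that nonetheless admit volume-equivalent self-embeddings) and false negatives (legitimate good balls whose transitivity patterns are subtle). Pinning this down will consume the bulk of the proof; the non-measure case, by contrast, falls out cleanly from the absorption clause and classical transitivity.
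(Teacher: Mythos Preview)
Your plan attempts to characterize good balls intrinsically and then nest $U$ inside one. The paper sidesteps this entirely. In the class $\cM$ it sets
\[
\on{RCB}(u,v)\equiv(\exists\hat v\in\pi_0(v))\bigl[u\subseteq\hat v\wedge(\forall w)[\varnothing\neq w\subseteq\hat v\to(\exists\gamma\in\bG[\hat v])\,\gamma(u)\subseteq w]\bigr],
\]
i.e.\ $U$ lies in a single component $\hat V$ of $V$ and can be pushed by $G[\hat V]$ into \emph{every} nonempty regular open $W\subseteq\hat V$. Both directions are two lines: if $\cl U\subseteq B\subseteq\hat V$ for a collared ball $B$, then transitivity of collared balls (Lemma~\ref{lem:transitive}) carries $B$, hence $U$, into any $W$; conversely, fixing one collared ball $B_0\subseteq\hat V$ and taking $W=\inte B_0$, the witnessing $g$ gives $\cl U\subseteq g^{-1}(B_0)$. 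For $\cM_{\vol}$ the same scheme works after replacing $U$ by a connected $U'$ with $U\subseteq U'\subsetneq\hat V$ and restricting the test sets to connected $W$ with $\neg\vol_{\leq}(W,U',\hat V)$; then Lemma~\ref{lem:measure-ball} produces a good ball of the right measure inside any such $W$, and Lemma~\ref{lem:transitive} moves $U'$ there.

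Your Step~2 has a genuine gap. The absorption clause (some $g\in G[w'']$ with $g(\cl w')\subseteq w'$) does not rule out non-balls: a collared solid torus in a $3$--manifold, or indeed any collared compact codimension-zero submanifold, admits such a self-contraction. Your transitivity clause, read as ``$G[w'']$ acts transitively on the regular open subsets of $w'$ with a given number of components'', is too strong even when $w'$ is a genuine ball: a small sub-ball and $w'\cap C^{\perp}$ for a tiny closed ball $C\subseteq w'$ are both connected regular open subsets of $w'$, but they are not homeomorphic and hence lie in distinct $G$-orbits. So $\on{Ball}_{(\vol)}$ as described admits false positives and false negatives simultaneously, and the measure-preserving case---which you yourself flag as the crux---is left as an unspecified ``careful tuning''. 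The remedy is not to repair the ball predicate but to discard it: the compressibility criterion above never needs to recognize a ball, and the good ball in the conclusion arises only as the preimage $g^{-1}(B_0)$ of one fixed ball already known to exist in $\hat V$.
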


Recall  our convention that this lemma
actually claims to produce two formulae, namely $\on{RCB}(u,v)$ and $\on{RCB}_{\vol}(u,v)$.

\begin{proof}[Proof of Lemma~\ref{lem:rcb}]
Let us consider the formula $\on{RCB}(u,v)$, which expresses that there exists some component $\hat v$ of $v$
satisfying the following two conditions:
\begin{itemize}
    \item $\hat v$ contains $u$;
    \item for each nonempty, regular open set $w$ contained in $\hat v$,
there exists some element $\gamma\in \mathbf{G}[\hat v]$
that moves $u$ into $w$.
\end{itemize}

We first claim that this formula satisfies the conclusion for $(M,G)\in\cM$.
Indeed, if $U$ is relatively compact in a collared ball $B\subseteq V$,
then there exists a unique $\hat V\in\pi_0(V)$ containing $B$, and hence $U$.
For each nonempty regular open $W\subseteq \hat V$, 
we see from Lemma~\ref{lem:transitive} that some $g\in G[\hat V]$ 
satisfies
\[
g(U)\sse g(B)\sse W,\]
as desired.
Conversely, suppose $\models\on{RCB}(U,V)$ holds
and let $\hat V$ be the connected component of $V$ containing $U$.
Let us fix a collared ball $B$ in $\hat V$
and set $W:=\inte B$. By assumption, we can find $g\in G[\hat V]$ such that $g(U)\sse W$.
Then 
$U$ is relatively compact in the collared ball $g^{-1}(B)$ in $V$.

For the case when $(M,G)\in\cM_{\vol}$, we set
\begin{align*}
&\on{RCB}_{\vol}(u,v)\equiv  (\exists u',\hat v)[(\hat v\in\pi_0(v)\wedge u\subseteq u'\subsetneq \hat v\wedge \on{conn}(u'))\wedge\\
&(\forall w)[(\on{conn}(w)\wedge w\subseteq \hat v
\wedge
\neg\vol_{\leq}(w,u',\hat v))\longrightarrow (\exists\gamma\in  \bG[\hat v])[\gamma(u')\subseteq w]]].
\end{align*}

In order to prove the forward direction, assume that $\on{RCB}_{\vol}(U,V)$ holds for some nonempty $U,V\in\ro(M)$. Let $U'$ and $\hat V$ be witnesses for the existentially quantified variables $u'$ and $\hat v$. 
Since $U'\subsetneq \hat V$, the Boolean subtraction $\hat V\cap (U')^\perp$ is nonempty.
We now see that 
\[ \mu(\hat V)=\mu(\hat V\cap \cl U')+\mu(\hat V\cap (U')^\perp)>\mu(\hat V\cap \cl U').\] 
So, Lemma~\ref{lem:measure-ball} furnishes a good ball $B\sse \hat V$ 
satisfying
\[\mu(B)>\mu(\hat V\cap \cl U').\] 
By Lemma~\ref{lem:leq-fo}, we have that $\neg\vol_\leq(\inte B,U',\hat V)$, and that some $g\in G[\hat V]$ satisfies that $g(U')\sse \inte B$. It follows that \[\cl U\sse\cl U'\sse g^{-1}(B)\sse \hat V\sse V,\] as desired.

For the backward direction, we pick a good ball $B$ satisfying $U\sse B\sse \hat V$ for a suitable $\hat V\in\pi_0(V)$ and set $U':=\inte B$.
Consider an arbitrary connected regular open set $W\sse \hat V$ satisfying $\neg\vol_\leq(W,U',\hat V)$. From Lemma~\ref{lem:leq-fo} again, we see that \[\mu(W)>\mu(U')=\mu(B).\] We may therefore find some $g\in G[\hat V]$ such that $g(B)\sse W$. This shows that $\on{RCB}_{\vol}(U,V)$ holds.
\end{proof}

When using Lemma~\ref{lem:rcb}, we will write $\on{RCB}$ both in the case of the full homeomorphism group and the measure-preserving homeomorphism group, suppressing the symbol $\vol$ from the notation. 

Many of the formulae below will actually have different meanings for $\cM$ and for $\cM_{\vol}$, though sometimes coincide in their implications;
we record the fact that $\on{RCB}_{(\vol)}(U,M)$ implies that $\cl U\sse \inte M$.

\subsubsection{Detecting finiteness of components}\label{ss:fin}
From part (\ref{p:cck}) of Corollary~\ref{cor:basic-form},
we can detect whether or not a given regular open set has exactly $k$ connected component in the theory of $G$ for each fixed $k\in\omega$. 
It is not obvious \emph{a priori} how to express the infinitude of the connected components of $U\in\ro(M)$,
as such an infinitude would be equivalent to the infinite conjunction 
\[\neg\cc_0(U)\wedge\neg\cc_1(U)\wedge\neg\cc_2(U)\wedge\cdots.\]
However, one can express such an infinitude in a single formula.

\begin{defn}\label{d:dispersed}
Let us set
\[
\on{dispersed}(u)\equiv (\forall \hat u\in\pi_{0}(u))[\on{RCB}(\hat u,u^{\perp}\oplus \hat u)].
\]
We say a regular open set $U$ is \emph{dispersed} if 
$\on{dispersed}(U)$ holds.
\end{defn}
Note that $\on{dispersed}(U)$ implies that
\[\cl \hat U\cap \cl (U\setminus \hat U)=\varnothing\]
for each connected component $\hat U$ of $U$.
Let us introduce another formula in the lemma below that will play crucial roles in several places of this paper; the proof is straightforward and we omit it.

\begin{lem}\label{lem:seq}
There exists an $L_{\bG,\ro}$--formula $\on{seq}(u,v,\gamma)$ such that 
\[\act_{\bG,\ro}(M,G)\models \on{seq}(U,V,g)\]
 for $U,V\in\ro(M)$ and $g\in G$ if and only if the following conditions hold
 for a unique $U'\in\pi_0(U)$:
\begin{enumerate}[(i)]
\item the set $U$ is dispersed;
\item we have that $V\cup g(V)\sse U$;
\item for all $\hat U\in\pi_0(U)$, the set $\hat U\cap V$ is nonempty and connected;
\item for all $\hat U\in \pi_0(U)\setminus\{U'\}$, the set $\hat U\cap g(V)$ is nonempty and connected;
\item\label{p:diff} we have that $U'\cap g(V)=\varnothing$;
\item\label{p:minimal} if a union $W$ of connected components of $U$ satisfies that $U'\sse W$
and that $g(V\cap W)\sse W$, then $W=U$.
\end{enumerate}
\end{lem}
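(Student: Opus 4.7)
The plan is to directly write down the formula $\on{seq}(u,v,\gamma)$ as a conjunction of translations of the conditions (i)--(vi), using only the primitives already constructed: $\on{dispersed}$ from Definition~\ref{d:dispersed}; the formulae $\on{conn}, \on{cc}, \on{ucc}, \pi_0$ from Corollary~\ref{cor:basic-form}; the Boolean symbols $\cap, \oplus, {}^\perp, \sse, \varnothing$; and the action $\appl$, through which $\gamma(v)$ and $\gamma(v\cap w)$ are well-defined. No genuinely new argument is required; the lemma is essentially bookkeeping once the earlier formulae are in place.

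Before writing the formula, two small points need to be verified so that each clause stays inside $L_{\bG,\ro}$ interpreted in $\ro(M)$. First, the set-theoretic union $V\cup g(V)$ need not be regular open, but since $U$ is (regular) open and since regular-open containment in $\ro(M)$ agrees with set inclusion, the requirement $V\cup g(V)\sse U$ is equivalent to the two Boolean inclusions $v\sse u$ and $\gamma(v)\sse u$. Second, in clause (vi) the quantifier ranges over unions $W$ of connected components of $U$; by Lemma~\ref{lem:top-prop}(\ref{p:cnt1}) such a $W$ is itself regular open, so we may quantify over $w\in\ro(M)$ and impose $\on{ucc}(w,u)$. Then $v\cap w$ is again regular open (the Boolean meet) and $\gamma(v\cap w)$ is regular open because $g$ is a homeomorphism. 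The distinguished component $U'$ is not a separate piece of data: clause (v) forces $U'\cap g(V)=\varnothing$ and clause (iv) forces every other component to meet $g(V)$, so $U'$ is uniquely determined as the unique component disjoint from $g(V)$; consequently a single existential quantifier over $u'\in\pi_0(u)$ suffices.

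Putting this together, I would set
\begin{align*}
\on{seq}(u,v,\gamma)\;\equiv\;
&\on{dispersed}(u)\wedge v\sse u\wedge \gamma(v)\sse u\;\wedge\\
&(\forall \hat u\in\pi_0(u))\bigl[\hat u\cap v\neq\varnothing\wedge \on{conn}(\hat u\cap v)\bigr]\;\wedge\\
&(\exists u'\in\pi_0(u))\Bigl[\,u'\cap\gamma(v)=\varnothing\;\wedge\\
&\qquad (\forall \hat u\in\pi_0(u))\bigl[\hat u\neq u'\rightarrow \bigl(\hat u\cap\gamma(v)\neq\varnothing\wedge\on{conn}(\hat u\cap\gamma(v))\bigr)\bigr]\;\wedge\\
&\qquad (\forall w)\bigl[\on{ucc}(w,u)\wedge u'\sse w\wedge \gamma(v\cap w)\sse w\rightarrow w=u\bigr]\Bigr].
\end{align*}

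To verify the biconditional, one reads the six conditions off the six conjuncts (with (iv) and (v) packaged into the existential over $u'$, and (vi) being the last implication). The forward direction is immediate: if $U,V,g$ satisfy (i)--(vi), the unique component $U'$ granted by (v) witnesses the existential. The backward direction uses that any witness $u'$ for the existential must coincide with the component $U'$ singled out by (v), because (iv) rules out any other component being disjoint from $g(V)$. No obstacle is anticipated; the only vigilance required is the conversion of ordinary set-theoretic statements about $V\cup g(V)$, $\hat U\cap V$, and $V\cap W$ into Boolean statements in $\ro(M)$, which the observations above justify.
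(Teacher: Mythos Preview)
Your proposal is correct and matches the paper's approach exactly: the paper simply states ``the proof is clear'' without writing out the formula, and your explicit conjunction is precisely the intended construction. Your observations that $V\cup g(V)\subseteq U$ reduces to two Boolean inclusions, that unions of components are regular open, and that uniqueness of $U'$ is automatic from (iv)--(v), are the only points requiring comment, and you have handled them correctly.
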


In a situation as in Lemma~\ref{lem:seq}, we can enumerate the components of $U$ as
\[
\hat U_0=U',\hat U_1,\ldots\]
so that $g(V\cap \hat U_i)\sse \hat U_{i+1}$ for each $i\ge0$.
Furthermore, we have an injection
\[\sigma=\sigma_{U,V,g}\co\pi_0(U)\longrightarrow\pi_0(U)\setminus\{U'\}\]
sending $\hat U_i$ to $\hat U_{i+1}$ for each $i\in\omega$.
We also note that for each $i\in\omega$ there exists a uniformly definable function $\on{seq}_i(u,v,g)$ such that
\[
\on{seq}_i(U,V,g)=\hat U_i.\]

We can now establish the main result of this subsection.
\begin{lem}\label{lem:infcomp}
There exists a formula $\on{infcomp}(w)$ such that 
 \[\models \on{infcomp}(W)\quad\textrm{if and only if }W\text{ has infinitely many connected components}.\]
 \end{lem}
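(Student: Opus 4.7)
The plan is to use the $\on{seq}$ predicate from Lemma~\ref{lem:seq} as the engine of infinitude, since (as discussed immediately after that lemma) $\on{seq}(u,v,\gamma)$ supplies an injection $\sigma\co\pi_0(u)\to\pi_0(u)\setminus\{u'\}$, forcing $\pi_0(u)$ to be Dedekind-infinite. Accordingly, I would define
\[
\on{infcomp}(w)\equiv(\exists u,v,\gamma)\bigl[u\sse w\wedge\on{seq}(u,v,\gamma)\wedge(\forall \hat w\in\pi_0(w))(\forall \hat u_1,\hat u_2\in\pi_0(u))[(\hat u_1\sse\hat w\wedge\hat u_2\sse\hat w)\to\hat u_1=\hat u_2]\bigr],
\]
which is an $L_{\bG,\ro}$--formula by Corollary~\ref{cor:basic-form}.

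For the forward direction, unpack the clauses of Lemma~\ref{lem:seq}: clause (iii) ensures $\hat U\cap V\ne\varnothing$ is connected for every component $\hat U$ of $U$, so $g(\hat U\cap V)$ is a nonempty connected subset of $U$ and lies in a unique component $\sigma(\hat U)$; clause (v) forbids $\sigma(\hat U)=U'$; and if $\sigma(\hat U_1)=\sigma(\hat U_2)=\hat U$ then $\hat U\cap g(V)$ would be expressed as a disjoint union of two nonempty open pieces, contradicting the connectedness in clause (iv). Thus $\sigma$ is an injection into a proper subset of $\pi_0(U)$ and $\pi_0(U)$ is infinite; the final conjunct of $\on{infcomp}$ then transfers this infinitude to $\pi_0(W)$, since the map $\pi_0(U)\to\pi_0(W)$ sending a component of $U$ to the unique component of $W$ containing it is forced to be injective.

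For the backward direction, suppose $W$ has infinitely many components. Compactness of $M$ furnishes distinct components $W_0,W_1,\ldots$ of $W$ together with points $x_k\in W_k$ converging to some $p\in M$. In each $W_k$, choose a good ball $B_k$ (using Lemma~\ref{lem:measure-ball}) with $x_k\in B_k$, $\diam B_k<1/k$ and $p\notin\cl B_k$; then $p$ is the only accumulation point of $\{\cl B_k\}$ not already contained in some $\cl B_k$, and since $p\notin\cl B_k$ for every $k$ the regular open set $U:=\sup_k\inte B_k$ is dispersed in the sense of Definition~\ref{d:dispersed}. Pick smaller good balls $V_k\sse\inte B_k$ and set $V:=\sup_k V_k$. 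It remains to produce $g\in G$ with $g(V_k)\sse\inte B_{k+1}$ for every $k$: I would pass to a Euclidean coordinate chart around $p$, relabel so that all $V_k,\inte B_{k+1}$ eventually lie inside a small neighborhood $N$ of $p$, and place them along a ray so that a continuous radial shift inside $N$, extended by the identity, accomplishes the desired map; the finitely many outlying indices outside $N$ are handled separately using Lemma~\ref{lem:transitive}(\ref{p:trans-path}) and composed in. In the measure-preserving setting, first preshrink each $V_k$ so that $\mu(V_k)<\mu(\inte B_{k+1})$ and invoke Lemma~\ref{lem:transitive}(\ref{p:transitive-measure}) to realise each individual shift by a compactly supported element of $\Homeo_{0,\mu}(M)$; the shrinking supports ensure that an infinite product converges uniformly to an honest homeomorphism, as in the proof of Proposition~\ref{prop:full-support}. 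Once $g$ is constructed, conditions (i)-(v) of $\on{seq}$ are immediate and the minimality clause (vi) reduces to the observation that the only subset of $\omega$ containing $0$ and closed under $k\mapsto k+1$ is $\omega$ itself. The principal obstacle is the simultaneous realisation of infinitely many shifts by a single (measure-preserving) homeomorphism, and the accumulation at $p$ forced by compactness is precisely what makes this tractable.
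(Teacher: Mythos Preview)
Your formula is equivalent to the paper's (your ``at most one component of $u$ per component of $w$'' says exactly that $u\cap\hat w$ is connected, since $u\sse w$), and your forward direction via the Dedekind-infinite injection $\sigma$ is the paper's argument verbatim.

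The backward direction has a gap in the non-measure-preserving case. You propose to ``place [the balls] along a ray'' and use a single radial shift, but the balls $B_k$ are constrained to lie inside the prescribed components $W_k$ of $W$, and there is no reason the $W_k$ should all meet any common ray emanating from $p$; they may, for instance, spiral around $p$ in directions that do not stabilise. So the radial shift cannot be arranged in general, and ``composing in'' finitely many stray indices via path-transitivity does not help, since those auxiliary homeomorphisms may disturb the balls already handled by the shift. The paper's remedy (which you essentially adopt in the measure-preserving paragraph) is to choose pairwise \emph{disjoint} topological balls $D_i$, each meeting $\hat U_i$ and $\hat U_{i+1}$ and no other $\hat U_j$, pick $g_i\in G[\inte D_i]$ sending a small good ball $C_i\subset\hat U_i$ into $\hat U_{i+1}$, set $V=\bigcup_i\inte C_i$, and take $g=\prod_i g_i$. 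Note that disjointness of the supports, not merely shrinking diameters, is what guarantees $g(C_i)=g_i(C_i)$; shrinking diameters alone gives uniform convergence of the product but would not prevent later factors $g_j$ from moving $g_i(C_i)$ again. You should make this disjointness explicit in the measure-preserving case as well.
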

\begin{proof}
Let us define
\[\on{infcomp}(w)\equiv 
(\exists u,v,\gamma)[
\varnothing\ne u\sse w \wedge 
(\forall \hat w\in\pi_0(w))[\on{conn}(u\cap \hat w)]
\wedge
\on{seq}(u,v,\gamma)
].\]

In order to prove the forward direction, suppose we have $\on{seq}(U,V,g)$
for some nonempty $U\sse W$, such that each connected component $\hat W$ of $W$ satisfies $\on{conn}(U\cap \hat W)$. In particular, we have $|\pi_0(U)|\le|\pi_0(W)|$. The injection $\sigma_{U,V,g}$ 
above certifies that $\pi_0(U)$ is an infinite set.
Hence, $\pi_0(W)$ is infinite as well.

For the backward direction, suppose that $W$ has infinitely many components.
We will establish $\models \on{infcomp}(W)$ only in the case of $(M,G)\in \cM_{\vol}$, since the case  $(M,G)\in \cM$ is strictly easier.
We use an idea similar to the proof of Proposition~\ref{prop:full-support}.
We first find distinct components $\{\hat W_i\}_{i\in\omega}$ of $W$ such that some sequence $\{x_i\}_{i\in\omega}$ satisfying $x_i\in \hat W_i$ converges to some point $x^*\in M$. 
We consider a sufficiently small compact chart neighborhood $B$ of $x^*$, which still intersects infinitely many components of $W$. 
Let $n=\dim M$.
By the Oxtoby--Ulam theorem, we can
simply identify $B$ with $B^n(1)$ or $B^n(1)\cap\bH^n_+$ equipped with the Lebesgue measure. The point $x^*$ is then identified with the origin $O$.
By shrinking each $\hat W_i$ to $\hat U_i\sse \inte B$ and passing to a subsequence,
we can further require the following for all $i\ge0$.
\begin{itemize}
\item The open set $\hat U_i$ is an open Euclidean ball, converging to $x^*=O$;
\item We have 
$
\on{dist}(x^*,\hat U_{i+1})+\diam(\hat U_{i+1})<\on{dist}(x^*,\hat U_i)$.
\end{itemize}
We set  \[U:=\bigsqcup_i \hat U_i=\oplus_i \hat U_i\] and $U'=\hat U_0$.
We can find a disjoint collection of compact topological balls  $\{D_i\}$ such that $\inte D_i$ intersects both $\hat U_i$ and $\hat U_{i+1}$, and no other $\hat U_j$'s. 
Using the path--transitivity as in  Lemma~\ref{lem:transitive},
we can inductively find  
a \[g_i\in G[\inte D_i]\]
sending some  good ball $C_i\sse \hat U_i$ onto another  good ball inside $\hat U_{i+1}$.
We will set  \[V=\bigcup_{i\in\omega} \inte C_i.\]
By the uniform convergence theorem, the sequence 
\[\left\{\prod_{i=0}^k g_i\right\}_{k\ge0}\]
converges to a homeomorphism $g\in\Homeo_{0,\mu}(M)\le G$, which witnesses the properties that the formula $\on{infcomp}(U)$ requires.
\end{proof}

It follows immediately that we may also test whether a regular open set has finitely many components, and write \[\on{fincomp}(u)\equiv\neg\on{infcomp}(u).\]   

\subsubsection{Touching and containing the boundary}
By a \emph{collar (embedding)} of the boundary in a manifold $M$, we mean an embedding \[h\co \partial M\times [0,1)\longrightarrow M\] that extends the identity map \[\partial M\times\{0\}\longrightarrow \partial M;\]
we sometimes allow $h$ to be an embedding of $\partial M\times [0,1]$.
The image of a collar embedding is called a \emph{collar neighborhood}.
A fundamental result due to Brown~\cite[Theorem 2]{Brown1962} says that the boundary of a topological manifold admits a collar.
We now produce several formulae regarding the boundary of a given manifold.

\begin{lem}\label{lem:touch}
There exist $L_{\bG,\ro}$--formulae as follows:
\be
\item\label{p:touch-partial}
A formula $\on{touch}_\partial(u)$ such that
\[\models \on{touch}_\partial(U)
\quad\textrm{if and only if the closure of }U\text{ nontrivially intersects }\partial M.\]
\item\label{p:stab-partial}
A formula $\on{stab}_\partial(\gamma)$ such that 
\[\models \on{stab}_\partial(g)
\quad\textrm{if and only if }
g\text{ setwise stabilizes each boundary component of }M.\]
\ee\end{lem}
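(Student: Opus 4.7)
The plan is to exploit the formulae already developed in this section --- especially $\on{RCB}$, $\on{dispersed}$, and $\on{infcomp}$ --- together with the fact that every good ball lies in $\inte M$. The guiding idea for part~(1) is that $\cl U$ meets $\partial M$ if and only if $U$ contains an infinite dispersed family of good balls whose only possible accumulation must lie on $\partial M$, in the sense that any regular open set sitting in a good ball meets only finitely many of them. Accordingly I would set
\[
\on{touch}_\partial(u) \;\equiv\; (\exists u')\bigl[u'\sse u \wedge \on{dispersed}(u') \wedge \on{infcomp}(u') \wedge \Phi(u')\bigr],
\]
where $\Phi(u')$ asserts that for every $w$ with $\on{RCB}(w,\bM)$, the regular open join of those components of $u'$ meeting $w$ has only finitely many components (definable via $\on{ucc}$ from Corollary~\ref{cor:basic-form} and $\on{fincomp}$).

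The backward direction is the heart of part~(1): if $\cl U \sse \inte M$ then by compactness one covers $\cl U$ by finitely many good balls whose interiors $W_1,\ldots,W_N$ satisfy $\on{RCB}(W_k,\bM)$ and collectively absorb every component of $U'$; since each $W_k$ is met by only finitely many components, $U'$ would have only finitely many, contradicting $\on{infcomp}(U')$. For the forward direction one picks $p \in \partial M\cap\cl U$ and constructs $U' := \bigoplus_i B_i$ as a dispersed disjoint union of small good balls $B_i \sse U$ centered at points $p_i \in U\cap\inte M$ with $p_i \to p$ and $\diam B_i \to 0$ fast enough that the closures $\cl B_i$ are pairwise disjoint; any good ball $B_W$ is uniformly separated from $p$ in $M$ and therefore meets only finitely many $B_i$.

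With $\on{touch}_\partial$ in hand, part~(2) proceeds as follows. One first notes that ``$\partial M \sse u$'' is equivalent to $\neg\on{touch}_\partial(u^\perp)$; and that a connected $\hat u$ touching at least two boundary components of $M$ is captured by the existence of disjoint $V_1,V_2$ with $\on{disj}(V_1,V_2)$, $\partial M \sse V_1\oplus V_2$, and both $\on{touch}_\partial(\hat u\cap V_1)$ and $\on{touch}_\partial(\hat u\cap V_2)$ (the verification uses that a connected boundary component forced inside $V_1\sqcup V_2$ must lie entirely in one of them). Using this, I define $\on{purecollar}(u)$ to mean that $\partial M\sse u$ and each component of $u$ touches $\partial M$ at exactly one boundary component, whence the components of $u$ are in canonical bijection with $\pi_0(\partial M)$. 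I then set
\[
\on{stab}_\partial(\gamma) \;\equiv\; (\forall u)\bigl[\on{purecollar}(u) \to (\exists u')[\on{purecollar}(u')\wedge u'\sse u \wedge \Psi(\gamma,u',u)]\bigr],
\]
where $\Psi$ expresses that whenever $\hat u'\in\pi_0(u')$ and $\hat u\in\pi_0(u)$ satisfy $\hat u'\sse\hat u$, we have $\gamma(\hat u')\sse\hat u$.

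The verification of~(2) proceeds in two directions: if $g$ stabilizes each $C_i$ setwise, continuity of $g$ together with the collaring theorem of Brown lets one shrink a given pure collar enough to obtain the required $u'$; conversely, if $g(C_i)=C_j$ for some $i\ne j$ then for any pure collars $u'\sse u$ the component of $u'$ containing $C_i$ is mapped by $g$ to a regular open set containing $C_j\sse \hat U_j$, which is disjoint from $\hat U_i$, so the condition in $\Psi$ fails. The main obstacle throughout is part~(1): expressing $\cl U\cap\partial M\ne\varnothing$ first-order, without any way to directly quantify over finite covers of $\cl U$, forces us to route the information through the $\on{infcomp}/\on{fincomp}$ contrast applied to the collection of good-ball-contained regular open sets afforded by $\on{RCB}$.
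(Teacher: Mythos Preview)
Your argument for part~(1) is correct and essentially identical to the paper's: the paper sets
\[
\on{touch}_\partial(u)\equiv(\exists u')\bigl[u'\subseteq u\wedge\on{infcomp}(u')\wedge(\forall w)[\on{RCB}(w,\bM)\to\on{finint}(u',w)]\bigr],
\]
where $\on{finint}(u',w)$ says only finitely many components of $u'$ meet $w$. Your added $\on{dispersed}(u')$ clause is harmless, and your backward direction (cover $\cl U$ by finitely many good-ball interiors and apply pigeonhole) is a clean alternative to the paper's route, which instead passes to a subsequence of components of $U'$ accumulating at an interior point and takes $W$ to be a small good-ball neighborhood of that point.

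For part~(2) your construction is correct but considerably more elaborate than the paper's. The paper simply sets $\on{contain}_\partial(u)\equiv\neg\on{touch}_\partial(u^\perp)$ and
\[
\on{stab}_\partial(\gamma)\equiv(\forall u,\hat u)\bigl[(\hat u\in\pi_0(u)\wedge\on{contain}_\partial(u)\wedge\on{touch}_\partial(\hat u))\to\hat u\cap\gamma(\hat u)\neq\varnothing\bigr].
\]
The key observation you are working around --- that a component of $U\supseteq\partial M$ whose closure meets $\partial M$ must already contain an entire boundary component --- follows immediately from connectedness and local connectedness, so there is no need to isolate ``pure collars'' or to express ``touches exactly one boundary component''. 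The paper's non-disjointness test $\hat u\cap\gamma(\hat u)\neq\varnothing$, applied to arbitrarily thin disjoint collar neighborhoods of the $\partial_i$, already forces $g(\partial_i)=\partial_i$. Your approach buys nothing extra here, though your auxiliary formula for ``$\hat u$ touches at least two boundary components'' is correct and might be of independent use.
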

\begin{proof}
(\ref{p:touch-partial})
 Let us define the formula 
\[\on{finint}(u,w)\equiv(\exists u')[\on{fincomp}(u')\wedge(\forall \hat u\in\pi_{0}(u))[\hat u\cap w\ne\varnothing\rightarrow \hat u\in\pi_0(u')]].\]
It is clear from the formulation that \[\models\on{finint}(U,W)\] if and only if $U$ meets $W$ in finitely many components on $U$. 
We now set
\begin{align*}
\on{touch}_\partial(u)\equiv&(\exists u')[u'\subseteq u\wedge \on{infcomp}(u')\wedge(\forall w)
[\on{RCB}(w,M)\longrightarrow \on{finint}(u',w)]].
\end{align*}

Suppose that $\cl U\cap\partial M\neq\varnothing$. Choose a sequence of points $\{p_i\}_{i\in\omega}$ in $U$ converging to
a point in $\partial M$, and choose small open balls $U_i\ni p_i$ in $U$ with pairwise disjoint closures and with radii tending to zero.
Let $U'$ be the union of these balls. Now, if $W$ fails to
satisfy $\on{finint}(U',W)$, then $W$ must meet infinitely many of the balls $U_i$; thus $\cl W\cap\partial M\neq\varnothing$. In
particular, $\neg\on{RCB}(W,M)$.
	
Conversely, suppose that $\cl U\cap\partial M=\varnothing$, and let $U'\sse U$ 
have infinitely many components $\{U_i\}_{i\in\omega}$. As in Lemma~\ref{lem:infcomp}, by shrinking components of
 $U'$ and passing to a subsequence, we may assume that each $U_i$ is an open ball,
that the sequence has shrinking radii, and converges monotonically to the origin in an open chart in $\bR^n$. Moreover, the origin
in this chart lies in the interior of $M$, by assumption.
	
We may take $W$ to be a neighborhood of the origin in this chart, which then satisfies $\on{RCB}(W,M)$ and meets infinitely
many components of $U'$. Thus, $U'$ fails to witness $\on{touch}_\partial(U)$, and so $\on{touch}_\partial(U)$ does not hold.

(\ref{p:stab-partial})
Setting
\[\on{contain}_\partial(u)\equiv\neg\on{touch}_\partial(u^\perp),\]
we see that $\on{contain}_\partial(U)$ holds if and only if $\partial M\sse U$.
We now define \[\on{stab}_{\partial}(
\gamma)\equiv (\forall u,\hat u)[(\hat u\in\pi_0(u)\wedge\on{contain}_\partial(u)\wedge 
\on{touch}_{\partial}(\hat u))\rightarrow \hat u\cap\gamma(\hat u)\neq\varnothing].\]

We claim that $\on{stab}_\partial(g)$ holds for $g\in G$ if and only if $g$ setwise stabilizes each component of \[\partial M=\partial_1\sqcup\partial_2\sqcup\cdots\sqcup \partial_k.\]
For the forward direction, suppose we have $\on{\stab}_\partial(g)$.
By the aforementioned result of Brown, we can pick a closure--disjoint collection of collar neighborhoods $\{V_1,V_2,\ldots,V_k\}$ of the components
of $\partial M$.
Defining \[U:=\bigsqcup_{i=1}^k V_i,\] we see from the hypothesis that $g(V_i)\cap V_i\ne\varnothing$ for each $i$, which trivially implies $g(\partial_i)=\partial_i$.
The backward direction is clear after observing that the hypothesis of $\stab_\partial(g)$
simply says that $\hat U$ contains at least one boundary component.\end{proof}

\section{Interpretation of second-order arithmetic}\label{sec:arith}
The goal of this section is to prove that the group $G$
interprets second order arithmetic and analysis uniformly for $(M,G)\in\cM_{(\vol)}$,
establishing the case of $i=1$ in Proposition~\ref{prop:induct}.

\subsection{An example of an interpretation of first order arithmetic}\label{ss:arith1}
As a warm-up, let us interpret first order arithmetic 
\[
\on{Arith}_1=(\bN,+,\times,0,1)\]
in the structure $\act_{\bG,\ro}(M,G)$. 
For this, we consider the surjection
\[
\#\pi_0\co \{U\in\ro(M)\mid \on{fincomp}(U)\}\longrightarrow \bN\]
sending each $U$ to $\#\pi_0(U)$, namely the cardinality of $\pi_0(U)$.
The domain of this surjection is clearly definable,
and so is the fiber by the formula $\#_=(u,v)$ in Corollary~\ref{cor:basic-form}.
To complete an interpretation of $\on{Arith}_1$, it suffices to establish the following:
\begin{lem}\label{lem:arith1}
There exist $L_{\bG,\ro}$--formulae $\#_+$ and $\#_\times$ such that the following hold for all $U,V,W$ having finitely many connected components. 
\begin{enumerate}
    \item We have $\models\#_{+}(U,V,W)\quad \textrm{if and only if}\quad \#\pi_0(W)=\#\pi_0(U)+\#\pi_0(V)$.
     \item We have $\models\#_{\times}(U,V,W)\quad \textrm{if and only if}\quad \#\pi_0(W)=\#\pi_0(U)\cdot\#\pi_0(V)$.
\end{enumerate}
\end{lem}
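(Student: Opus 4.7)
The plan is to express each arithmetic relation by asserting the existence of a witness regular open set whose combinatorial structure encodes the equality. For addition, I would simply write
\[
\#_+(u,v,w) \equiv (\exists u',v')\bigl[\on{ccpartition}(u',v',w) \wedge \#_=(u,u') \wedge \#_=(v,v')\bigr].
\]
The backward direction is immediate from Corollary~\ref{cor:basic-form}(\ref{p:ccpartition}): the $\on{ccpartition}$ clause forces $W=U'\sqcup V'$ with both pieces unions of components of $W$, so $\#\pi_0(W) = \#\pi_0(U')+\#\pi_0(V')$, and then $\#_=$ matches these against $\#\pi_0(U)$ and $\#\pi_0(V)$. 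The forward direction is equally direct: partition the finitely many components of $W$ arbitrarily into a block of size $\#\pi_0(U)$ and a block of size $\#\pi_0(V)$, take their unions as $U'$ and $V'$, and all three conjuncts hold.

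For multiplication, the idea is to realize $nm$ as an ``$n\times m$ grid'' by grouping the components of $W$ into $n$ clusters of $m$ each, with each cluster collected inside a single component of an auxiliary regular open set $X$. Concretely, I would set
\[
\#_\times(u,v,w) \equiv (\exists x)\bigl[\#_=(u,x) \wedge w\sse x \wedge (\forall \hat x\in\pi_0(x))[\#_=(v,w\cap \hat x)]\bigr].
\]
The backward direction proceeds cleanly: since $W\sse X$ and every component of $W$ is connected, each component of $W$ sits in a unique component of $X$, so $w\cap\hat x$ is automatically a union of components of $W$, and $W=\bigsqcup_{\hat X\in\pi_0(X)}(W\cap\hat X)$ gives $\#\pi_0(W)=n\cdot m$. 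The degenerate cases $n=0$ or $m=0$ are also handled correctly, forcing $W=\varnothing$ as needed.

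The one substantive step is the forward direction of $\#_\times$: given $W$ with $nm$ components, I must actually construct $X$. The recipe is to partition the components of $W$ arbitrarily into $n$ blocks of size $m$, join the members of each block by an embedded arc in $M$, and take a sufficiently thin tubular neighborhood of (block $\cup$ arcs) to obtain a connected regular open $\hat X_i$ containing precisely that block; then $X:=\oplus_i \hat X_i$ is the required witness. The main obstacle is guaranteeing that the arcs and their neighborhoods for different blocks can be arranged to be pairwise disjoint, and this is exactly where the standing hypothesis $\dim M\ge 2$ imposed at the end of Subsection~\ref{ss:top} is used, since in dimension at least two arcs can be perturbed in the complement of a prescribed finite set of other arcs. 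In the measure-preserving case, the same construction goes through: Lemma~\ref{lem:measure-ball} provides good balls of arbitrarily small measure to form the tubular neighborhoods, and Lemma~\ref{lem:transitive} allows the arcs to be isotoped to the desired position.
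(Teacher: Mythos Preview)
Your formula for addition is essentially identical to the paper's (you write $u',v'$ where the paper writes $w_0,w_1$), and your verification is correct.

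For multiplication you take the dual route: instead of placing a witness $w'$ \emph{inside} $u$ (as the paper does), you engulf $w$ inside a larger witness $x$. The backward direction of your formula is fine, but the forward construction of $X$ has a gap. You correctly note that arcs for different blocks can be made pairwise disjoint when $\dim M\ge 2$, but you do not address the obstruction that an arc joining two components of one block may be forced to pass through a component of $W$ belonging to \emph{another} block. Components of $W$ are open sets of full dimension, so the general--position argument for arcs does not let you perturb off them. Concretely, take $M=S^2$ and let $W_1,W_2,W_3,W_4$ be four pairwise--disjoint concentric annuli; for the ``arbitrary'' partition $\{W_1,W_3\},\{W_2,W_4\}$ every arc from $W_1$ to $W_3$ must cross $W_2$, so your $\hat X_1$ meets $W_2$ and $W\cap\hat X_1$ acquires extra components. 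Thus the partition cannot be chosen arbitrarily; one must argue that \emph{some} partition and system of arcs avoids all foreign components, and this is not addressed.

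The paper sidesteps the issue entirely by dualizing: it sets
\[
\#_{\times}(u,v,w)\equiv(\exists w')\bigl[w'\sse u\wedge\#_{=}(w,w')\wedge(\forall \hat{u}\in\pi_0(u))[\#_{=}(\hat{u}\cap w',v)]\bigr],
\]
so the forward direction only requires producing $m$ disjoint small good balls inside each component of $U$, which is immediate. Your approach can likely be repaired by choosing the grouping compatibly with the separation structure of the $W_i$ in $M$, but the paper's formulation makes the existence of a witness trivial.
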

\begin{proof}
Recall the meaning of the formula $\on{ccpartition}$ from 
Corollary~\ref{cor:basic-form}.  Let us make the following definitions.
\begin{align*}
\#_{+}(u,v,w)&\equiv(     
     \exists w_{0},w_{1})[\on{ccpartition}(w_0,w_1,w)
     \wedge \#_{=}(w_{0},u)\wedge
	 \#_{=}(w_{1},v)],\\
\#_{\times}(u,v,w)&\equiv(\exists w')[ (w'\sse u)\wedge\#_{=}(w,w')\wedge(\forall \hat{u}\in\pi_0(u))[\#_{=}(\hat{u}\cap w',v)]].
\end{align*}
It is straightforward to check that these formulae have the intended meanings.\end{proof}

\subsection{Our interpretation of second order arithmetic}\label{ss:second-arith}
We now describe an interpretation of second order arithmetic
\[\on{Arith}_2
=	(\bN,\PP (\bN),0,1,+,\times,\in),\]
which has two sorts, namely $\bN$ and $\PP(\bN)$.
In particular, we will have to be able to quantify over subsets of $\bN$. 

In order to achieve this, we will consider more restricted class of regular open sets $U$, the components of which admit a linear order as described by the formula $\on{seq}(U,V,g)$; see Section~\ref{ss:fin}.
In this linear order of $U$, the $k$--th component $\hat U_k$ will interpret the integer $k\in\omega$, and a union of the connected components $W$ will interpret a subset in a natural way. 
We will utilize Lemma~\ref{lem:arith1}, but not the actual interpretation itself from the previous subsection.

To be more concrete, let us first note the following. 
\begin{lem}\label{lem:seq-all}
There exists a uniformly defined function $\on{seq}_\restriction(u,v,\gamma,\hat u)$ 
such that if  
 \[\models \on{seq}(U,V,g)\wedge \hat U\in\pi_0(U),\]
then for the unique $k\in\omega$ satisfying
$\hat U=\on{seq}_k(U,V,g)$, we have that
\[ \on{seq}_\restriction(U,V,g,\hat U)
= \oplus_{0\le i\le k}\on{seq}_i(U,V,g).\]
\end{lem}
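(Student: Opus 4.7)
The plan is to characterize the target set $W_k := \oplus_{0 \le i \le k} \on{seq}_i(U, V, g)$ among unions of components of $U$ by a first-order closure-plus-minimality condition. The intuition is that $W_k$ is generated from $\hat U_0$ by iterating the ``successor'' operation $\hat U_i \mapsto \hat U_{i+1}$, which is realized by $W \mapsto \gamma(V \cap W)$ thanks to the enumeration property $g(V \cap \hat U_i) \sse \hat U_{i+1}$ recorded after Lemma~\ref{lem:seq}; cutting off this closure at $\hat U$ itself then yields exactly $W_k$.

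First I would isolate $\hat U_0 = U'$ by a uniformly defined auxiliary formula: by conditions (iv) and (v) of Lemma~\ref{lem:seq}, $\hat U_0$ is the unique component of $U$ disjoint from $\gamma(V)$, so set
\[
\on{seq}_0(u,v,\gamma)=u_0 \iff u_0\in\pi_0(u)\wedge u_0\cap\gamma(v)=\varnothing,
\]
using the primitives from Corollary~\ref{cor:basic-form}. Next, introduce the auxiliary predicate
\begin{align*}
P(u,v,\gamma,\hat u, w) \equiv{} & \hat u\in\pi_0(u)\wedge \on{ucc}(w,u)\wedge \on{seq}_0(u,v,\gamma)\sse w \\
& {}\wedge \hat u\sse w \wedge \gamma(v\cap w\cap \hat u^{\perp})\sse w,
\end{align*}
and define $\on{seq}_\restriction(u,v,\gamma,\hat u) = w$ to hold exactly when $P(u,v,\gamma,\hat u,w)$ holds and $w$ is minimal with this property, in the sense that every $w'\sse w$ satisfying $P(u,v,\gamma,\hat u,w')$ equals $w$.

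Finally, I would verify correctness in two steps. On the one hand, $W_k$ satisfies $P$: containment of $\hat U_0$ and $\hat U_k$ is immediate, and the enumeration yields $\gamma(V\cap W_k\cap\hat U^{\perp})=\gamma\bigl(V\cap\oplus_{i<k}\hat U_i\bigr)\sse \oplus_{i<k}\hat U_{i+1}\sse W_k$. On the other hand, any $W'$ satisfying $P$ must contain $W_k$: if $j\le k$ is the least index with $\hat U_j\not\sse W'$, then $j>0$ since $U'\sse W'$, and since $j-1<k$ the component $\hat U_{j-1}$ lies in $W'\cap\hat U^{\perp}$, so $\varnothing\ne\gamma(V\cap\hat U_{j-1})\sse W'$ meets $\hat U_j$ (using nonemptiness from condition (iii) of Lemma~\ref{lem:seq}), which forces $\hat U_j\sse W'$ and yields a contradiction. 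Hence $W_k$ is the unique minimum $w$ satisfying $P$, so $\on{seq}_\restriction$ is a well-defined function with the claimed value. The only mild subtlety I anticipate is the choice to remove $\hat u^{\perp}$ (rather than $\hat u$) in the closure clause: this is precisely what prevents the induction from propagating past $\hat U_k$, and is essential for the minimum to be $W_k$ rather than all of $U$.
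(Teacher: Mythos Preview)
Your proposal is correct, but it takes a different route from the paper's. You characterize $W_k$ as the \emph{minimum} set satisfying a forward-closure condition truncated at $\hat u$: you require $\gamma(v\cap w\cap\hat u^{\perp})\sse w$ and then quantify over all $w'\sse w$ to express minimality. The paper instead gives a direct characterization with no minimality clause: it requires $w$ to be \emph{backward}-closed, in the sense that $\gamma^{-1}(w\cap\gamma(v))\sse w$, together with the explicit upper cutoff $\gamma(v\cap\hat u)\cap w=\varnothing$ (which forces $\hat U_{k+1}\not\sse w$). Backward closure then pins down $w$ exactly: from $\hat U_k\in w$ one gets $\hat U_0,\ldots,\hat U_k\in w$, and any $\hat U_j$ with $j>k$ would force $\hat U_{k+1}\in w$, contradicting the cutoff. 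The paper's formulation is marginally cleaner in that it avoids the extra universal quantifier over $w'$ needed to express minimality; your version has the virtue of mirroring the inductive generation of $W_k$ from $\hat U_0$, and your verification that $W_k$ is in fact the least element (not merely a minimal one) satisfying $P$ is what makes the function well-defined.
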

\begin{proof}
It is routine to check that the following has the intended meaning:
\begin{align*}
(w=\on{seq}_\restriction(u,v,\gamma,\hat u))
\equiv&
\on{ucc}(w,u)\wedge
(\on{seq}_0(u,v,\gamma)\oplus \hat u)\sse w\wedge\\
&\gamma(v\cap \hat u)\cap w=\varnothing\wedge
\gamma^{-1}(w\cap \gamma(v))\sse w.\qedhere
\end{align*}
\end{proof}

Let us consider the set
\[
X_1:=\{(U,V,g,\hat U)\mid \on{seq}(U,V,g)\wedge \hat U\in\pi_0(U)]\},\]
which is definable in $\act_{\bG,\ro}(M,G)$ uniformly for $(M,G)\in\cM_{(\vol)}$.
We have a surjection \[\rho_1:=\#\pi_0\circ\on{seq}_\restriction\co X_1 \longrightarrow \bN.\]
This surjection satisfies
\[k=\rho_1(U,V,g,\hat U)\quad\text{if and only if}\quad \hat U=\on{seq}_k(U,V,g).\]
 The fiber of $\rho_1$ is
\[
\{(\underline{y},\underline{z})\in X_1\times X_1\mid 
\#_=(\on{seq}_\restriction(\underline{y}),\on{seq}_\restriction(\underline{z}))\},\]
and hence uniformly definable. It is trivial to check that $\rho_1$ produces a uniform interpretation of $\on{Arith}_1$ to $\act_{\bG,\ro}(M,G)$.
For instance, we have
\[
\rho_1(\underline{y})+\rho_1(\underline{y}')=\rho_1(\underline{y}'')
\]
if and only if
\[\models\#_+(\on{seq}_\restriction(\underline{y}),\on{seq}_\restriction(\underline{y}'),\on{seq}_\restriction(\underline{y}'')).\]
After this interpretation of $\on{Arith}_1$, the symbol $\#$ has an intended meaning as a function from $\ro(M)$ to $\bN$.
We  have uniformly defined functions  $\on{seqcomp}(u,v,\gamma,\alpha)$
and 
$\on{seqcomp}_\restriction(u,v,\gamma,\alpha)$
satisfying
\begin{align*}
\on{seqcomp}(U,V,g,k)&=\on{seq}_k(U,V,g),\\
\on{seqcomp}_\restriction(U,V,g,k)&=\oplus_{i=1}^k\on{seqcomp}(U,V,g,i).
\end{align*}

Similarly, we consider another uniformly definable set
\[
X_1':=\{(U,V,g,W)\mid \on{seq}(U,V,g)\wedge \on{ucc}(W,U)]\}.\]
We have a surjection
\[
\rho_1'\co 
X_1'\longrightarrow \PP(\bN)\]
defined by the condition
\[
\rho_1'(U,V,g,W):=
\{\rho_1(U,V,g,\hat W)\mid \hat W\in\pi_0(W)\}.\]
Since the fiber of $\rho_1$ is uniformly definable, so is that of $\rho_1'$. 
We will introduce the function symbol $\PP_\#$ in $L^2_{\act}$ interpreted as $\rho_1'$.

Finally,  we have 
\[\rho_1(U_1,V_1,g_1,\hat U)\in\rho_1'(U_2,V_2,g_2,W_2)\]
if and only if
\[
\#\pi_0\on{seq}_\restriction(U_1,V_1,g_1,\hat U)
=\#\pi_0\on{seq}_\restriction(U_2,V_2,g_2,\hat W)\]
for some $\hat W\in\pi_0(W_2)$.
Hence, the pair of surjections $(\rho_1, \rho_1')$ produces the desired interpretation of the two--sorted structure $\on{Arith}_2$. We note that the order relation symbol $<$, the successor symbol $S$,
and the inclusion symbol $\sse$ are naturally interpreted as a consequence.

\subsection{Analysis}
The interpretation of $\bR$ is now standard. From $\bN$, we interpret $\bZ$, together with addition, multiplication, and order, by imposing a suitable definable equivalence relation on a suitable definable subset of $\bN^2$. We similarly interpret $\bQ$ by imposing a suitable definable equivalence relation on a definable subset of $\bZ\times \bZ$.

We define $\bR$ together with addition, multiplication, and order via Dedekind cuts of $\bQ$; all this is interpretable because of our access to $\PP(\bN)$. Finally, we have canonical identifications of \[\bN\subseteq\bZ\subseteq\bQ\subseteq\bR,\] wherein we set $=$ to be the relation identifying natural numbers with their images under this sequence of inclusions. In the sequel, we will simply talk about natural numbers, integers, or rationals as elements of $\bR$ without further comment. 
We further may assume to have $\bR^k$ in the universe of the structure for all $k\in\omega$.

In order to justify the introduction of the sort symbol $\cont_{k,\ell}$ in the structure,
let us first note that each function in $C(\bR,\bR)$ is uniquely determined by its restriction on $\bQ$. Since
\[
|\bR^\bQ|=(2^\omega)^\omega=2^\omega=|\bR|,\]
we have an interpretation of $C(\bR,\bR)$ by $\bR$, and hence, that of \[C(\bR^k,\bR^\ell).\]
This latter set is the domain of $\cont_{k,\ell}$, and the function symbols
\[
\on{appl}(\chi,\rho)=\sigma, \quad \on{norm}(\chi)=\rho\]
are interpreted accordingly. In practice, we write
\[
f(r)=s,\quad \|f\|=r\]
for the above formulae.
The expanded language containing $\act_{\bG,\ro}$ structure, second order arithmetic, and analysis will be written $\act^2=\act_{\bG,\ro,\bR}$.
This establishes the uniform interpretability of $\act_{\bG,\ro,\bR}(M,G)$ to $\act_{\bG,\ro}(M,G)$, namely Proposition~\ref{prop:induct} for the case $i=1$.

\section{Interpretation of points}\label{sec:points}
We now wish to be able to talk about points of $M$ more directly, and prove Proposition~\ref{prop:induct} for the case $i=2$. This will complete the proof of Theorem~\ref{thm:interpret}.

Rubin~\cite{Rubin1989} accesses points in a space with a locally dense action via a certain collection of ultrafilters consisting of regular open sets; in his approach, the intersection of the closures of all the open sets in each ultrafilter corresponds to a single point of the space. 
We cannot follow this approach directly, as we need to stay within the first order theory of groups and Boolean algebras. Instead, we consider a certain collection of regular open sets such that the components in each of those open sets converge to a single point of the manifold. We continue to make the standing assumption that $(M,G)\in\cM_{(\vol)}$ with $\dim M>1$, unless stated otherwise.

\subsection{Encoding  points of a manifold}\label{ss:points}
Using the $L_{\bG,\ro,\bR}$--formulae introduced in the preceding sections, 
we define the following new formulae:
\begin{align*}
\on{cof}(w,u)
\equiv&\on{infcomp}(w)\wedge w\sse u \wedge
(\forall \hat u\in\pi_0(u))[\on{conn}(w\cap\hat u)]
,\\ 
\on{cofcontain}(w,u)\equiv&(\exists w')[w'\sse w \wedge \on{cof}(w',u)],\\
\on{cofmove}(\gamma,u_0,u_1)
\equiv&
(\exists w)[\on{cof}(w,u_0)\wedge
\on{cof}(\gamma(w),u_1)\wedge
(\forall \hat w\in\pi_0(w))[\on{RCB}(\hat w,u_0)]]
\end{align*}

Note that when $\models\on{cofmove}(g,U_0,U_1)$,
we can find some $W$ whose connected components can be written as \[W=\bigsqcup_{i\in\omega} W_i,\] 
with the property that each $W_i$ is contained in some relatively compact ball inside $U_0$;
moreover, no two components of $W$ belong to the same component of $U_0$,
and similarly for $g(W)$ and $U_1$.

We consider the definable set
\[
\lst_G(U):=\bigcup \{G[W^\perp]\mid W\in\ro(M)\text{ and }\on{cof}(W,U)\},\]
which we call as the \emph{limit stabilizer} of $U$ in $G$.
Intuitively, each element of this set fixes some open set that is arbitrarily close to a certain limit point of the components of $U$.
We will write $\gamma\in\lst(u)$ for the formula corresponding to $g\in \lst_G(U)$.

\begin{rem}
One can rephrase Rubin's interpretation of points in second order logic~\cite{Rubin1989} as follows, as summarized in~\cite[Theorem 3.6.17]{KK2021book}. Rubin allowed certain collections (called, \emph{good ultrafilters}) of regular open sets to interpret a single point in the space, by taking the intersection of the closures of those open sets. He then proved that two good ultrafilters $P$ and $Q$ interpret different points $p$ and $q$
if and only if the group
\[G\{Q^\perp\}:=\bigcup\{ G[W^\perp]\mid W\in Q\}\]
acts \emph{sufficiently transitively}, in the sense that
for some $U\in P$, 
every $V\in \ro(M)$ satisfying $\varnothing\ne V\sse U$
is an element of the set
\[
G\{Q^\perp\}(P).\]
In our approach, we will utilize the sufficient transitivity of the limit stabilizer characterized in terms of the formula $\on{cofmove}(\gamma,w_0,w_1)$.
\end{rem}

Consider the set $X_2:=\act^2(M,G)(\phi_{\on{dom}}^2)$,
defined by the following formula:
\begin{align*}
\phi_{\on{dom}}^2(u,v,\gamma)\equiv&
\on{seq}(u,v,\gamma)\wedge
(\forall w_0,w_1)[\on{cof}(w_0,u)\wedge\on{cof}(w_1,u)\longrightarrow\\
&(\exists\delta\in\lst(w_0))[\on{cofmove}(\delta,w_0,w_1)]].\end{align*}

The following lemma furnishes an interpretation of the points.

\begin{lem}\label{lem:rho3}
For each $(U,V,g)\in X_2$
and for an arbitrary sequence $\{x_i\}_{i\in\omega}$ satisfying \[x_i\in \on{seqcomp}(U,V,g,i)\] for all $i\in\omega$, the limit
\[
\rho_2(U,V,g):=\lim_{i\to\infty} x_i\]
exists in $M$, and is independent of the choice of $\{x_i\}_{i\in\omega}$.
Moreover, the following conclusions hold:
\begin{enumerate}
\item\label{p:surj}  The map
$\rho_2\co X_2\longrightarrow M$ is surjective.
\item\label{p:eq} We have 
\[
\rho_2(U_0,V_0,g_0)= \rho_2(U_1,V_1,g_1)\]
if and only if
some $g\in\lst_G(U_0)$ satisfies
 \[\on{cofmove}(g,U_0,U_1).\]
\item\label{p:action} We have 
\[
h(\rho_2(U,V,g))=\rho_2(U',V',g')\]
if and only if
\[\rho_2(h(U),h(V),hgh^{-1})=\rho_2(U',V',g').\]
\item\label{p:membership} We have $\rho_2(U,V,g)\not\in W$ 
if and only if
some $(U',V',g')\in X_2$ satisfies
\[
U'\cap W=\varnothing\wedge\left(
\rho_2(U,V,g)= \rho_2(U',V',g')\right).\]
\item\label{p:bdry} We have $\rho_2(U,V,g)\in\inte M$
 if and only if 
 there exists some $W$ such that $\on{RCB}(W,M)$ and such that 
 $\on{cofcontain}(W,U)$.
\end{enumerate}
\end{lem}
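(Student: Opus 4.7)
The plan is to establish the well-definedness of $\rho_2$ first---this is the substantive part---and then to derive parts (\ref{p:surj})--(\ref{p:bdry}) as adaptations of the underlying construction. Throughout, I will use that the components $\hat U_i := \on{seqcomp}(U, V, g, i)$ of $U$ have pairwise disjoint closures, and that each sits inside a good ball of $M$.

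For convergence, I would argue by contradiction: suppose there exist distinct $p, q \in M$ and (after passing to subsequences if needed) choices $x_i, y_i \in \hat U_i$ with $x_i \to p$ and $y_i \to q$. I then construct $W_0, W_1 \in \ro(M)$ by letting $W_0 \cap \hat U_i$ be a connected open neighborhood of $x_i$ inside $\hat U_i$ whose diameter tends to zero, and similarly $W_1$ around $y_i$. Both are regular open by the shrinking-ball construction used in Proposition~\ref{prop:full-support}, and both are cofinal in $U$. The crucial feature of this choice is that the components of $W_0$ themselves shrink and cluster at $p$, so any cofinal $\tilde W \subseteq W_0$ has $\cl \tilde W \ni p$, and thus every $\delta \in \lst_G(W_0)$ must satisfy $\delta(p) = p$. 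The condition $\phi_{\on{dom}}^2$ then demands such a $\delta$ together with RCB pieces $\hat w_i$ cofinal in $W_0$ whose $\delta$-images lie cofinally in $W_1$; but these pieces converge to $p$, so by continuity of $\delta$ their images converge to $p$ as well, contradicting their required confinement inside $W_1$ (which is bounded away from $p$). This forces $p = q$, giving a well-defined $\rho_2(U,V,g) := \lim x_i \in M$.

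For the remaining parts, I rely on explicit constructions. For surjectivity (part (\ref{p:surj})), given $p \in M$ I build $(U, V, g) \in X_2$ with $\rho_2(U, V, g) = p$ by mimicking Lemma~\ref{lem:infcomp}: a sequence of shrinking good balls $B_i$ converging to $p$ (using density of $\inte M$ when $p \in \partial M$), smaller balls $C_i \subseteq B_i$, and an infinite product of locally supported shifts $g = \prod g_i$ with $g_i(C_i) \subseteq B_{i+1}$; verification of $\phi_{\on{dom}}^2$ uses a parallel interleaving construction near $p$. Part (\ref{p:action}) is immediate from continuity of $h$ together with equivariance of the $\on{seq}$ data under conjugation. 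For part (\ref{p:membership}), the hypothesis $p \notin W = \inte \cl W$ implies $p \in \cl(W^\perp)$; running the surjectivity construction inside $W^\perp \cap \inte M$ (which accumulates at $p$ by density of $\inte M$) produces $(U', V', g') \in X_2$ with $U' \subseteq W^\perp$, so $U' \cap W = \varnothing$; conversely, $U' \cap W = \varnothing$ forces $\rho_2(U', V', g') \in \cl U' \subseteq M \setminus W$. For part (\ref{p:bdry}), if $p \in \inte M$ then a good ball $W$ about $p$ eventually contains $\hat U_i$ for $i \geq N$, and the subunion $W' := \oplus_{i \geq N} \hat U_i$ witnesses $\on{cofcontain}(W, U)$; conversely, $\on{RCB}(W, M)$ yields $\cl W \subseteq \inte M$, so the limit $p$ of any cofinal sub-collection contained in $W$ lies in $\inte M$.

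For equality (part (\ref{p:eq})), the forward direction uses the near-$p$ shifting construction from surjectivity to produce $g \in \lst_G(U_0)$ whose $\on{cofmove}$ witness shifts shrinking sub-balls of $U_0$-components into $U_1$-components clustered near the common limit $p$. For the converse, the witness $\tilde W \subseteq U_0$ for $g \in \lst_G(U_0)$ has $\cl \tilde W \ni p := \rho_2(U_0, V_0, g_0)$, whence $g(p) = p$; the $\on{cofmove}$ witness $w$ has components converging to $p$, so their $g$-images converge to $p$ and simultaneously lie cofinally in $U_1$, forcing $\rho_2(U_1, V_1, g_1) = p$ by continuity. The principal obstacle throughout is the convergence argument: arranging $W_0, W_1$ whose components shrink inside possibly non-shrinking $\hat U_i$'s is what makes the continuity of $\delta$ at $p$ yield a contradiction, since without this shrinking a limit-stabilizer element could freely transport pieces between disjoint neighborhoods of $p$ and $q$.
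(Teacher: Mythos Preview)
Your proposal is correct and follows essentially the same approach as the paper. The well-definedness argument via shrinking cofinal subcollections $W_0,W_1$ and the observation that any $\delta\in\lst_G(W_0)$ fixes the accumulation point $p$, forcing the $\on{cofmove}$ witness to land near $p$ rather than near $q$, is exactly the paper's argument; the paper likewise treats the backward direction of part~(\ref{p:eq}) as a reprise of this, handles the forward direction of~(\ref{p:eq}) by the disjoint-balls-plus-uniform-convergence construction you sketch, and dismisses parts~(\ref{p:action})--(\ref{p:bdry}) as straightforward, where you have supplied a bit more detail.
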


\begin{proof}
Let $(U,V,g)\in X_2$, and let 
\[\{x_i\in\on{seqcomp}(U,V,g,i)\}_{i\in\omega}\] be a sequence.
In particular, we have $x_i\in\inte M$.
Suppose two subsequences
\[\{y_{0,j}\}_{j\in\omega},\{y_{1,j}\}_{j\in\omega}\subseteq \{x_i\}_{i\in\omega}\] converge to two distinct points $y_0$ and $y_1$.
For $i=0$ and $i=1$, we let $W_i$ be the union of sufficiently small good open balls $W_{i,j}$ centered at $y_{i,j}$.
In particular, we may assume that $\on{cof}(W_i,U)$,
and that \[\lim_j W_{i,j}=\{y_i\}\]
in the Hausdorff sense.
By hypothesis, we have some $h\in \lst(W_0)$ such that 
\[\models\on{cofmove}(h,W_0,W_1).\] Since $h$ fixes points  arbitrarily close to $y_0$, we have $h(y_0)=y_0$. It follows that
\[
y_0=h(y_0)=\lim h(y_{0,j})= y_1.\]
This proves the existence of the claimed limit. 
The same argument also implies the independence of the limit from the choice of $\{x_i\}_{i\in\omega}$,
and also the backward direction of part~(\ref{p:eq}). The surjectivity of $\rho_2$ in part (\ref{p:surj}) is clear, after choosing $U$ to be a suitable sequence of good open balls converging to a given point in the Hausdorff sense.

We now verify the forward direction of part~(\ref{p:eq}).
By hypothesis, we can find two sequences $\{x_{0,j}\}_{i\in\omega}$ and $\{x_{1,j}\}_{i\in\omega}$ such that 
\[
x_{i,j}\in \on{seqcomp}(U_i,V_i,g_i,j).\]
As in the proof of Lemma~\ref{lem:infcomp}, we can find a disjoint collection of good balls $\{D_i\}$ of decreasing sizes such that each $D_i$ contains $x_{0,j}$ and $x_{1,j}$, 
after passing to a subsequence if necessary.
By the uniform convergence theorem, we have some $h\in G$ 
such that $h(x_{0,j})=x_{1,j}$ for all $j$,
and such that
$h$ pointwise fixes some nonempty open set inside 
\[\on{seqcomp}(U_0,V_0,g_0,j)\cap D_j^\perp.\]
In particular, we have that $h\in\lst_G(U_0)$ and that $\on{cofmove}(h,U_0,U_1)$, as claimed.
The remaining parts of the lemma are straightforward to check.    
\end{proof}

In part~(\ref{p:eq}) of the lemma, we see that
the relation
\[\rho_2(U,V,g)=\rho_2(U',V',g')\]
is first order expressible;
hence, we deduce that the functional relation $g(p)=q$ and the membership relation $p\in W$ in parts~(\ref{p:action}) and~(\ref{p:membership}) are interpretable for $p,q\in M$, $g\in M$ and $W\in\ro(M)$. Part (\ref{p:bdry}) of the lemma separates out the interior points.

Direct access to points allows us to make  direct reference to set theoretic operations. For instance, we can define $\union(u,v,w)$ by
    \[\union(u,v,w)\equiv(\forall\pi)[(\pi\in u\vee\pi\in v)\leftrightarrow \pi\in w].\] Clearly, $\union(U,V,W)$ for regular open sets $\{U,V,W\}$
    if and only if $W=U\cup V$. Henceforth, we will include the usual set-theoretic union symbol in the language such as $\cup, \cap$ and $\setminus$.    
We are also able now to talk directly about the closure $\cl U$ of a regular open set $U$, both in $M$ and in $V$ for arbitrary $U\subseteq V$; for this, it suffices to note that $p\in \cl U$ if and only if $p\not\in U^\perp$.

\subsection{Encoding discrete sets of points in a manifold}\label{ss:discrete}
We now interpret the set 
\[\PP^{\on{disc}}(\inte M):=\{A\sse \inte M\mid A\text{ is discrete}\}.\]
In particular, every finite subset of $\inte M$ belongs to $\PP^{\on{disc}}(\inte M)$.

We recall from Lemma~\ref{lem:touch} the formula $\on{finint}(u,w)$. 
We first let $X_2'$ be the set of quadruples $(U,V,g,W)$ defined by the following formula:
\begin{align*}
\psi_{\on{dom}}^2(u,v,\gamma,w)\equiv&
\on{dispersed}(w)\wedge (u\oplus\suppe \gamma\sse w)\wedge\\
&\forall \hat w\in\pi_0(w)[\phi^2_{\on{dom}}(u\cap \hat w,v\cap \hat w,\gamma)
\wedge\on{RCB}((u\oplus\suppe \gamma)\cap \hat w,\hat w)].\end{align*}


For such a quadruple, we set
\[
\rho_2'(U,V,g,W):=\{\rho_2(U\cap\hat W,V\cap \hat W,g)\mid \hat W\in\pi_0(W)\}.\]
It is routine to check that this map defines a surjection
\[
\rho_2'\co X_2'\longrightarrow\PP^{\on{disc}}(\inte M)\]
 with a definable fiber.
Namely, we have \[
\rho_2'(U_0,V_0,g_0,W_0)\ne \rho_2'(U_1,V_1,g_1,W_1)\]
if and only if there exists some regular open sets $W',W''$
satisfying
that
\[\on{RCB}(W',W'')\]
and that
\[
\neg\on{finint}(U_i,W')\wedge\on{finint}(U_{1-i},W'')\]
for some $i\in\{0,1\}$.

We  interpret the membership between a point and a set;
namely, we have
\[\rho_2(U,V,g)\in\rho_2'(U',V',g',W')\]
if and only if there exists some $W''$ satisfying $\on{RCB}(W'',W')$
and 
\[
\on{cofcontain}(W'',U).\]
We also interpret the group action 
\[
h(\rho_2'(U,V,g,W))=\rho_2'(U',V',g',W'))\]
as
\[\rho_2'(h(U),h(V),hgh^{-1},h(W)))=\rho_2'(U',V',g',W'))\]
Finally, the set \[\rho_2'(U,V,g,W)\in\PP^{\on{disc}}(\inte M)\] has finite cardinality if and only if $W$ has finitely many connected components. In this case, the cardinality function $\#$ for $T\in\PP^{\on{disc}}(\inte M)$ is clearly definable by  \[\#(\rho_2'(U,V,g,W))=\#\pi_0(W).\]
We omit the details, which are very similar to those in Section~\ref{ss:points}.
We denote by $\bM^{\on{disc-int}}$ the sort symbol for sets belong to $\PP^{\on{disc}}(\inte M)$.

\subsection{Interpreting exponentiation}
We now interpret the map
\[G\times\bZ\times M\longrightarrow M, \quad (g,k,p) \mapsto g^k\cdot p,\]
so that the exponentiation map \[\exp:G\times\bZ\longrightarrow G\] is definable.
Note that $g^k(p)=p'$ holds with $k\in\omega$ if and only if 
we can write $k=mq+r$ for some integers $0\le r<m$ and $q$
such that 
we have a period--$m$ orbit
\[p,g(p),\ldots,g^m(p)=p,\]
and a sequence of distinct points
\[
p,g(p),\ldots,g^r(p)=p'.\]
Let us now define formulae $\on{exp}_{\on{cyc}}$ and $\on{exp}_{\on{lin}}$, which will express the existences of a periodic orbit and of a sequence without repetitions, respectively. More precisely, we set
    \begin{align*}
    \exp_{\on{cyc}}(\gamma,\alpha,\pi)\equiv&
    (\alpha=0)\vee
    (\exists\tau\in\bM^{\on{disc-int}})
    [
    \#\tau=\alpha\wedge (\pi\in\tau=\gamma(\tau) )\\
    &\wedge\neg(\exists\tau'\in \bM^{\on{disc-int}})[\varnothing\ne\tau'\subsetneq\tau\wedge\gamma\cdot\tau'=\tau']],\\
    \exp_{\on{lin}}(\gamma,\alpha,\pi,\pi')\equiv&
    (\exists\tau\in \bM^{\on{disc-int}})
    [
   \#\tau=\alpha+1\wedge \{\pi,\pi'\}\sse\tau\wedge
    \gamma(\tau\setminus\{\pi'\})=\tau\setminus\{\pi\}\\
    	&\wedge\neg(\exists\tau'\in \bM^{\on{disc-int}})[\varnothing\ne\tau'\subsetneq\tau\wedge\gamma\cdot\tau'=\tau']].
    \end{align*}
    

We see that $\exp(g,k)\cdot p=p'$ with $k\ge0$ if and only if the tuple $(g,k,p,p')$ satisfies the formula 
\[
    \exp(\gamma,\alpha,\pi,\pi')\equiv
    (\exists \alpha',\beta_1,\beta_2)
    [\alpha=\beta_2\alpha'+\beta_1\wedge \exp_{\on{cyc}}(\gamma,\beta_2,\pi)\wedge
    \exp_{\on{lin}}(\gamma,\beta_1,\pi,\pi')].\]
It is then trivial to extend the definition for the case $k<0$, establishing the definability of the exponentiation function.

\subsection{The $\agape$ structure}
We now define our ultimate structure 
\[\act^3(M,G)=\act_{\bG,\ro,\bR,\bM}(M,G)=\agape(M,G)\]
as the extension of $\act^2(M,G)=\act_{\bG,\ro,\bR}(M,G)$
by including the points in $M$
and adding the relations 
\[g(p)=q,\quad p\in W\]
for $g\in G$, $p,q\in M$ and $W\in\ro(M)$.
We are then justified to use expressions such as
\[
p\in\inte M,\quad p\in\partial M,\quad p\in \cl U,g^n=h,\quad \fix g=\cl U,\quad U\cup V = W\]
for points $p$, regular open sets $U,V,W$, group elements $g,h\in G$
and integer $n\in\bZ$ within $\agape(M,G)$.

\section{Balls with definable parametrizations}\label{sec:balls}
From this point on, we work in the $\agape$ language
$L_{\agape}=L_{\bG,\ro,\bR,\bM}$, containing second order arithmetic and points. 
The underlying structure will be $\agape(M,G)$; 
recall our further standing assumption that $\dim M>1$.
We will use the notation $I^n=[0,1]^n$ and \[Q^n(r):=[-r,r]^n.\]
The main objective of this section is to interpret the dimension and collared balls inside of a manifold, as described in the following two theorems:

\begin{thm}\label{thm:dim}
For each $n\ge 2$, there exists a formula $\dim_n$ such that $\models\dim_n$ if and only if $M$ is an $n$--manifold.
\end{thm}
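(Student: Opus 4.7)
My plan is to express ``$M$ is an $n$-manifold'' via its Lebesgue covering dimension, exploiting Ostrand's refinement theorem to avoid directly quantifying over arbitrary finite families of regular open sets. For fixed $n\ge 2$, set
\[
\dim_n \;\equiv\; \phi_{\le n}\wedge\neg\phi_{\le n-1},
\]
where $\phi_{\le n}$ is the $L_\agape$-sentence asserting that for every non-empty proper regular open $W\subsetneq M$, there exist $n+1$ dispersed regular open sets $D_0,\dots,D_n$ whose union pointwise covers $M$ and such that every component of every $D_j$ is contained in some translate $g(W)$ with $g\in G$. Since $n$ is a fixed integer, only $n+1$ regular open variables are needed; dispersedness is from Definition~\ref{d:dispersed}, the covering clause is the universal statement $(\forall p\in\bM)(\bigvee_{j=0}^{n}p\in D_j)$ using the points sort from Section~\ref{sec:points}, enumeration of components uses $\cc$ from Corollary~\ref{cor:basic-form}, and $\hat D\subseteq g(W)$ is immediate.

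The correctness of this characterization is Ostrand's theorem: a normal space has covering dimension $\le n$ if and only if every open cover admits a refinement decomposable into $n+1$ pairwise disjoint families of open sets. For the forward direction, given $W$ the orbit $\{g(W):g\in G\}$ covers $M$ by the transitivity of Lemma~\ref{lem:transitive}, and a compact $n$-manifold admits a refinement of any such cover of multiplicity $\le n+1$; Ostrand then supplies the $D_j$. For $\neg\phi_{\le n-1}$, I take $W$ to be a small good ball, so that a hypothetical refinement into only $n$ dispersed families with components in $G$-translates of $W$ would give a fine open cover of $M$ with each point in at most $n$ elements, contradicting the lower bound $\dim M\ge n$ on Lebesgue dimension.

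The main obstacle I foresee is the measure-preserving case $(M,G)\in\cM_{\vol}$, where every $g\in G$ preserves the Oxtoby--Ulam measure and hence $g(W)$ has the same measure as $W$: one cannot shrink balls by group elements. However, a cover of a compact manifold by copies of a single ball of small measure is still topologically fine by compactness, so Ostrand's theorem applies verbatim; the verification hinges on the measure-preserving transitivity on good balls of equal measure given in Lemma~\ref{lem:transitive}. A secondary point is to confirm that the universal quantification over $W$ in $\phi_{\le n}$ poses no obstruction for large $W$: if $W$ is big enough that a single $G$-translate covers $M$, then $D_0=M$, $D_1=\cdots=D_n=\varnothing$ already works, so the non-trivial content is genuinely concentrated on small $W$.
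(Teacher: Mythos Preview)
Your lower-bound argument breaks in the non--measure-preserving case. For $(M,G)\in\cM$, Lemma~\ref{lem:transitive} makes $G$ transitive on \emph{all} good balls, so for a good ball $W$ the clause ``each component of $D_j$ lies in some $g(W)$'' says nothing more than ``each component lies in some good ball''. Smallness of $W$ is destroyed by stretching. Concretely, $S^n$ is covered by two good open balls (slightly enlarged hemispheres), each connected, each dispersed, each a $G$-translate of $W$; so your $\phi_{\le 1}$ holds for $S^n$ and $\dim_n$ is false there for every $n\ge 2$. Your intuition that ``small $W$ forces a fine cover'' is correct only in $\cM_{\vol}$, where measure is preserved.

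There is a second gap when $\partial M\ne\varnothing$. By definition a dispersed set has every component relatively compact in a good ball, hence lies entirely in $\inte M$; thus $D_0\cup\cdots\cup D_n\subseteq\inte M$ can never cover $M$, and your $\phi_{\le n}$ fails for all $n$. The paper sidesteps both issues by abandoning the $G$-orbit idea: it quantifies over regular open covers of \emph{fixed cardinality} $2^{n+1}$ of an arbitrary $\cl W$ and applies Ostrand's theorem directly, obtaining the lower bound from Lebesgue's covering theorem applied to an explicit cover of a cube by $2^{n+1}$ translated subcubes (Lemma~\ref{lem:dim}). The fixed bound $2^{n+1}$ is what lets one express ``every finite open cover'' without actually quantifying over finite tuples of unbounded length.
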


\begin{thm}\label{thm:output-balls}
For each $n\ge 2$, there exist formulae 
\[\on{flows}_n(u,\gamma,\pi),\on{Param}_n(u,\gamma,\pi,\rho,\pi')\]
such that the following hold for all $(M,G)\in\cM_{(\vol)}$ with $n=\dim M$.
\begin{enumerate}
\item\label{p:flows1}
Let $U\in\ro(M)$, $\underbar g\in \cH^n$ and $p\in M$.
If    	\[    \models\on{flows}_n(U,\underbar g,p)    	\] 
then there exists a unique homeomorphism 
\[
\Psi=\Psi[U,\underline{g},p]\co I^n\longrightarrow \cl U\]
the graph $\Gamma$ of which satisfies
\[\Gamma=\{(r,q)\in I^n\times M\co\agape(M,G) \models	 \param_n(U,\underbar g,p,r,q)\},\]
and also  $(0,p)\in\Gamma$.
\item\label{p:flows2} Let $U$ and $V$ be good open balls inside $\inte M$ such that $\cl U\sse V$; if $(M,G)\in\cM_{\vol}$, we further assume that $\vol(U)/\vol(V)$ is sufficiently small compared to some positive number determined by $n$. Then we have \[ \models(\exists \underline{\gamma}\exists \pi)\,\on{flows}_{n}(U,\underline{\gamma},\pi).\] 
\end{enumerate}
  \end{thm}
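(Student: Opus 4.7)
The plan is to encode a homeomorphism $\Psi\co I^n \to \cl U$ using an $n$-tuple $\underline g=(g_1,\ldots,g_n)$ of pairwise commuting homeomorphisms supported near $\cl U$, together with a basepoint $p$ to serve as $\Psi(0)$. The formula $\on{flows}_n(u,\underline\gamma,\pi)$ asserts: (i) the $g_i$ commute pairwise and have extended supports near $\cl U$, with $p\in\cl U$; (ii) for each $m\in\bN$ there exist pairwise commuting $h_1^{(m)},\ldots,h_n^{(m)}\in G$ satisfying $(h_i^{(m)})^{2^m}=g_i$ and $(h_i^{(m+1)})^2=h_i^{(m)}$ --- this ``coherent tower'' condition being first-order expressible using the exponentiation map and the $\bN$ sort together with universal-existential quantifiers over group elements at each level; and (iii) the dyadic lattice values
\[\psi(k_1/2^m,\ldots,k_n/2^m) := (h_1^{(m)})^{k_1}\cdots(h_n^{(m)})^{k_n}(p)\qquad(0\le k_i\le 2^m)\]
are pairwise distinct, lie in $\cl U$, become dense in $\cl U$ as $m\to\infty$, and nest into a cellular refinement matching the standard dyadic cubical subdivision of $I^n$; this cellular-matching clause is expressible via $\on{RCB}$, $\on{conn}$, and the interpreted point/regular-open membership relation.

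Given $\on{flows}_n(U,\underline g,p)$, coherence in (ii) makes $\psi$ well-defined on $\bZ[\tfrac{1}{2}]^n\cap I^n$. The formula $\param_n(u,\underline\gamma,\pi,\rho,\pi')$ then asserts $\pi'\in\cl U$ together with the statement that for every regular open $W\ni\pi'$ and every rational $\epsilon>0$ there exists a dyadic $r'\in I^n$ with $|\rho-r'|<\epsilon$ and $\psi(r')\in W$. By density and injectivity from (iii), this determines a unique continuous extension $\Psi(\rho):=\pi'$; the cellular-matching clause forces $\Psi$ to be injective, and continuity together with surjectivity onto $\cl U$ and compactness of $I^n$ promote $\Psi$ to a homeomorphism. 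The identity $\Psi(0)=p$ is immediate from the dyadic formula at $r=0$.

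For the existence statement in part~(2), given good open balls $\cl U\sse V\sse\inte M$, Lemma~\ref{lem:brown-measure} combined with Theorem~\ref{thm:ou} furnishes a homeomorphism $\Phi\co B^n(2)\to V$ with $\Phi(B^n(1))=\cl U$ and (in the measure case) pushing Lebesgue forward to $\mu|_V$ up to scale. Take $g_i:=\Phi T_i\Phi^{-1}$ extended by the identity, where $T_i$ is a compactly supported homeomorphism of $B^n(2)$ acting as unit translation in direction $i$ on a cube $[0,1]^n\sse B^n(1)$ and flowing back through an annular region of $B^n(2)\setminus B^n(1)$ to be the identity near $\partial B^n(2)$; define $h_i^{(m)}$ as analogous conjugates of $2^{-m}$-translations. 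In the measure-preserving case the return-flow annulus must carry enough volume to offset the translation, which is precisely the content of the hypothesis on $\vol(U)/\vol(V)$. The chief obstacle is crafting condition (iii) so that it is simultaneously first-order expressible and strong enough to force $\psi$ to extend to a \emph{homeomorphism} rather than merely a continuous surjection with possible identifications; this is the point of the cellular-matching clause, which ensures that distinct dyadic grid points at every level $m$ lie in disjoint regular open cells shrinking to points as $m\to\infty$, so that the continuous extension inherits injectivity from the combinatorics of the dyadic grid.
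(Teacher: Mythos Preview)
Your proposal has a genuine gap in condition (ii). The coherent tower $(h_i^{(m)})_{m\in\omega}$ is an $\omega$--indexed sequence of group elements, and the $\agape$ structure provides no sort for such sequences: you can quantify over natural numbers, subsets of $\bN$, reals, continuous maps $\bR^k\to\bR^\ell$, points, and discrete subsets of $\inte M$, but not over functions $\bN\to G$. A $\forall m\,\exists\underline h$ formulation loses coherence across levels, since the witnesses at level $m$ and level $m+1$ are chosen independently; and a $2^m$--th root of $g_i$ is far from unique, so there is no canonical choice recoverable from $(U,\underline g,p)$ alone. Consequently $\psi(r')$ at a dyadic $r'$ is not a definable function of the finite parameter tuple $(U,\underline g,p)$, and your formula $\param_n$ cannot refer to it unambiguously. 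The cellular-matching clause in (iii) does not repair this, because it too must speak about values $\psi(k/2^m)$ that depend on the undetermined roots.

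The paper circumvents this by a different mechanism: instead of taking roots, it arranges each $g_i$ to be an irrational rotation (of angle $1/\sqrt2$) on a solid torus containing $\cl U$. Since $\bZ$ is dense in $\bR/\sqrt2\,\bZ$, the real power $g_i^{r}$ on $U$ is the uniform limit of \emph{integer} powers $g_i^{s}$ with $[s]\to[r]$; this limit is uniquely determined by $g_i$ and is expressible via a formula $\on{conv}(U,g,r,h)$ built from Lemma~\ref{lem:unif-conv}. Thus $g_i^{r}(p)$ is a definable function of $(U,g_i,r,p)$ with no auxiliary choices, and $\on{flows}_n$ simply asserts that $(r_1,\ldots,r_n)\mapsto g_1^{r_1}\cdots g_n^{r_n}(p)$ is a continuous bijection $I^n\to\cl U$. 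For existence, each $g_i$ is realized as a time--$1$ map of a genuine circular flow supported in a solid torus inside $V$, with $\cl U$ occupying a $1/\sqrt2$ fraction of the torus; the volume hypothesis in the measure case guarantees room for such a torus.
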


In Section~\ref{sec:homeo}, we will modify the definition of $\Psi[U,\underline{g},p]$ so that the domain is $Q^n(2)$, instead of $I^n$.
We emphasize again that the above formulae for $\cM$ and $\cM_{\vol}$ may differ; for instance, the abbreviated sentence $\dim_n$ could be more precisely denoted by $\dim_n$ and $\dim_n^{\vol}$ separately depending on the context.

  \subsection{Detecting the dimension of a manifold}
We prove Theorem~\ref{thm:dim} by interpreting a sufficient amount of dimension theory.
For a topological space $X$,
the \emph{order} of a finite open cover $\UU$  is defined as the number
\[
\sup_{x\in X}\; \abs*{\{U\in\mathcal{U}\mid x\in U\}}.\]
Though in classical literature one considers general open covers,
it is sufficient (especially in our situation) to consider finite
covers only; cf.~\cite{Coornaert2005,Edgar2008}.

We say the \emph{topological dimension} of $X$ is at most $n$, and write $\dim X\le n$,
if every finite open cover of $X$ is refined by an open cover with order at most $n+1$.
The topological dimension $\dim X$ is defined to be $n$, if $\dim X\le n$ holds but $\dim X\le n-1$ does not.
A topological $n$--manifold has the topological dimension $n$. 

A collection of open sets $\VV=\{V_i\}_{i\in \II}$ is said to \emph{shrink} to another collection $\WW=\{W_i\}_{i\in \II}$ if $W_i\sse V_i$ holds for each $i$ in the index set $\II$. Let us note the following well-known facts.

\begin{lem}\label{lem:dim-facts}
\begin{enumerate}
    \item\label{p:lebesgue} (Lebesgue's Covering Theorem~\cite[Theorem IV.2]{HW1941}) If $\UU$ is a finite open cover of $I^n$ such that no element of $\UU$ intersects an opposite pair of codimension one faces,
    then $\UU$ cannot be refined by an open cover of order at most $n$. 
    \item\label{p:monotone} (Čech~\cite{Cech1933}) If $X$ is a metrizable space and if $Y\sse X$, then $\dim Y\le \dim X$.
    \item\label{p:ostrand} (Ostrand's Theorem~\cite[Theorem 3]{Ostrand1971}) If $\UU=\{U_i\}_{i\in \II}$ is a locally finite open cover of a normal space $X$ satisfying $\dim X\le n$, then for each $j=0,\ldots,n$, the cover $\UU$ shrinks to some pairwise disjoint collection $\VV^j=\{V_i^j\}_{i\in \II}$ of open sets such that
    the collection $\bigcup_j \VV^j$ is a cover.
\end{enumerate}
\end{lem}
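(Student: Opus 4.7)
Since Lemma~\ref{lem:dim-facts} is a compilation of three classical statements from general dimension theory, the plan is not to reprove them from scratch but to explain which known result each part corresponds to and to verify that the hypotheses in the statement match the sources.

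For part~(\ref{p:lebesgue}), the plan is to invoke the standard Lebesgue covering argument as presented in~\cite{HW1941}. Concretely: suppose for contradiction that such a cover $\mathcal{U}$ of $I^n$ did admit an open refinement $\mathcal{V}$ of order $\le n$. Using a partition of unity subordinate to $\mathcal{V}$, one builds a continuous map $I^n \to K$ into the nerve $K$ of $\mathcal{V}$, a simplicial complex of dimension $\le n-1$ (because the order is $\le n$); composing with a simplicial map $K \to \partial I^n$ induced by matching vertices of $K$ with faces that a refining vertex does meet, and using that no original element hits opposite codimension-one faces, yields a continuous self-map of $I^n$ whose restriction to $\partial I^n$ is homotopic to the identity but which has image in $\partial I^n$. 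This contradicts the non-retraction theorem (equivalently, Brouwer's fixed-point theorem). This is exactly the presentation given in Chapter~IV of~\cite{HW1941}, so I would simply cite it.

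For part~(\ref{p:monotone}), the monotonicity of covering dimension on metrizable spaces is classical; the plan is to reduce to an extension lemma. Given a finite open cover $\{W_i\}$ of $Y$, one writes each $W_i = \widetilde W_i \cap Y$ for some open $\widetilde W_i \subseteq X$, refines the resulting open collection of $X$ (together with $X\setminus Y$ if needed to form a cover) to order $\le \dim X + 1$, and then restricts back to $Y$. Metrizability ensures $Y$ is itself normal and that covers behave well under restriction. This is textbook material and the cleanest statement for our purposes is~\cite{Cech1933}.

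For part~(\ref{p:ostrand}), the plan is to directly cite Ostrand's theorem~\cite[Theorem~3]{Ostrand1971} verbatim; the statement of our lemma is an immediate specialization. The main conceptual obstacle across all three parts is therefore not the proofs themselves but ensuring that the form in which we need each theorem downstream (refining finite open covers of $M$ and of coordinate cubes, with specified disjointness properties) coincides with the form available in these references. I would verify this compatibility carefully, in particular noting that every compact manifold is normal (indeed metrizable) so that Ostrand's hypotheses apply, and that the hypothesis on opposite faces in the Lebesgue theorem is the exact condition we will later impose when constructing the Euclidean-ball parametrizations used in Theorem~\ref{thm:output-balls}.
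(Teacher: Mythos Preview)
Your proposal is correct and matches the paper's approach: the paper treats Lemma~\ref{lem:dim-facts} as a collection of well-known facts and simply cites the references without giving any proof. Your additional sketches of the arguments are more than the paper provides, though note that the immediate downstream application is Lemma~\ref{lem:dim} (characterizing $\dim M\le n$) rather than Theorem~\ref{thm:output-balls}.
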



We can now give a characterization of manifold dimension.

\begin{lem}\label{lem:dim}
For each positive integer $n$ and for each compact manifold $M$, the following two conditions are equivalent.
\begin{enumerate}[(A)]
    \item\label{p:dimA} The dimension of $M$ is at most $n$;
    \item\label{p:dimB} 
Let $W$ be a regular open set in $M$.
    If   
    \[
    \UU=\{U_i\co i=1,2,\ldots,2^{n+1}\}\] 
    is a regular open cover of $\cl W$, then there exists a pairwise disjoint collection
    \[\VV^j=\{V^j_i\co i=1,2,\ldots, 2^{n+1}\}\]
    of regular open sets for each $j\in\{0,1,\ldots,n\}$ such that $\UU$ shrinks to each $\VV^j$, and such that $\bigcup_j \VV^j$ is a cover of $\cl W$.
\end{enumerate}
\end{lem}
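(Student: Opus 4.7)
The strategy is to deduce the equivalence from Ostrand's shrinking theorem and Lebesgue's covering theorem (Lemma~\ref{lem:dim-facts}), with the key adaptations being the upgrade of Ostrand's output from open to regular open sets in the forward direction, and the construction of a concrete Lebesgue-style witness inside a chart in the backward direction.

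\textbf{Forward direction (A)$\Rightarrow$(B).} Assume $\dim M\le n$ and let $\UU=\{U_i\}_{i=1}^{2^{n+1}}$ be a regular open cover of $\cl W$. Since $\dim \cl W\le\dim M\le n$ by Lemma~\ref{lem:dim-facts}(\ref{p:monotone}), Ostrand's theorem supplies, for each $j=0,\ldots,n$, a pairwise-disjoint open shrinking $\{A^{j}_i\}_{i=1}^{2^{n+1}}$ of $\UU$ whose union over all $(i,j)$ still covers $\cl W$. Applying the normal-space shrinking lemma to the combined finite open cover $\{A^{j}_i\}_{i,j}$ of $\cl W$, I produce open sets $B^{j}_i$ with $\cl B^{j}_i\sse A^{j}_i$ such that $\{B^{j}_i\}_{i,j}$ still covers $\cl W$. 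I then define $V^{j}_i:=\inte\cl B^{j}_i$. By construction each $V^{j}_i$ is regular open; the inclusion $\cl B^{j}_i\sse A^{j}_i$ together with openness of $A^{j}_i$ yields $V^{j}_i=\inte\cl B^{j}_i\sse A^{j}_i\sse U_i$, so $\{V^{j}_i\}_i$ is a shrinking of $\UU$ that inherits within-family pairwise disjointness from $\{A^{j}_i\}_i$; and $B^{j}_i\sse V^{j}_i$ preserves the covering of $\cl W$ by $\bigcup_j\VV^j$.

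\textbf{Backward direction (B)$\Rightarrow$(A), via contrapositive.} Suppose $d:=\dim M\ge n+1$ and fix a chart $\phi\co(-1,4)^d\hookrightarrow M$ around an interior point, so that $W_0:=\phi((0,3)^d)$ is regular open with $\cl W_0=\phi([0,3]^d)$. Let $\{C_\epsilon\}_{\epsilon\in\{0,1\}^{n+1}}$ be the standard Lebesgue cover of $[0,3]^{n+1}$ by $2^{n+1}$ open boxes of side length slightly greater than $3/2$ sitting near the corners $3\epsilon$, arranged so that no $C_\epsilon$ meets a pair of opposite codimension-one faces of $[0,3]^{n+1}$. The sets $U_\epsilon:=\phi\bigl(C_\epsilon\times(-\tfrac{1}{2},\tfrac{7}{2})^{d-n-1}\bigr)$ are regular open in $M$ (the chart is a homeomorphism onto an open subset of $M$) and cover $\cl W_0$. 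If (B) held for this cover, $\bigcup_j\VV^j$ would be an open refinement of $\UU$ of order at most $n+1$, since each $\VV^j$ is pairwise disjoint. Restricting this refinement to the slice $\phi([0,3]^{n+1}\times\{c\})$ for any $c\in(0,3)^{d-n-1}$ and pulling back through $\phi$ would yield an open refinement of the Lebesgue cover $\{C_\epsilon\}$ of $[0,3]^{n+1}$ of order at most $n+1$, contradicting Lemma~\ref{lem:dim-facts}(\ref{p:lebesgue}).

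\textbf{Main obstacle.} The delicate step is the regularization in the forward direction: a direct replacement $A^{j}_i\rightsquigarrow\inte\cl A^{j}_i$ can enlarge $A^{j}_i$, since disjoint open sets may share boundary points, and within-family disjointness could then fail. The double-shrinking maneuver is what circumvents this, because the condition $\cl B^{j}_i\sse A^{j}_i$ forces $\inte\cl B^{j}_i\sse A^{j}_i$, and the regularized sets remain nested inside the Ostrand shrinkings.
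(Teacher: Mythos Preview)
Your proof is correct and follows essentially the same approach as the paper: in the forward direction you spell out the content of Lemma~\ref{lem:refine} to regularize Ostrand's output, and in the backward direction you build the same Lebesgue corner-box cover of an $(n+1)$--cube, thicken it across the extra dimensions, and restrict to a slice to contradict Lemma~\ref{lem:dim-facts}(\ref{p:lebesgue}). The only cosmetic differences are your choice of chart domain and cube coordinates.
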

\begin{proof}
Suppose we have $\dim M\le n$,
and assume the hypothesis of part (\ref{p:dimB}).
We see from Lemma~\ref{lem:dim-facts} (\ref{p:monotone}) that $\dim \cl W\le n$.
Part (\ref{p:ostrand}) of the same lemma implies that
$\UU$ shrinks to a pairwise disjoint collection of (not necessarily regular) open sets 
\[\WW^j=\{W_i^j\}_{i=1,\ldots,2^{n+1}}\] for each $j\in\{0,1,\ldots,n\}$ with the property that
$\bigcup_j \WW^j$ is a cover of the normal space $\cl W$.
By Lemma~\ref{lem:refine}, there exists a regular open cover
\[\VV:=\{V^j_i\}_{i,j}\]
of $\cl W$ satisfying \[\cl V^j_i\sse W^j_i\sse U_i\]
for all $i$ and $j$. This implies the conclusion of (B).

Conversely, suppose we have condition (B) and assume for contradiction that $m:=\dim M>n$. We first note the following:

\begin{claim*}
    The unit $m$--cube $[0,1]^m$ admits a finite regular open cover of cardinality $2^{n+1}$ that cannot be refined by another open cover with order at most $n+1$. 
\end{claim*}
Let $C$ denote the unit cube $[0,1]^{n+1}$ in $\bR^{n+1}$, which is embedded in $\bR^m$ as the subset with the last $m-n-1$ coordinates being zero. 
For each vertex $v\in C^{(0)}$, let us consider the translated open cube
\[U_v:=v+(-1,1)^{n+1}\sse \bR^{n+1}.\]
We then have a regular open cover
\[
\UU:=\{ U_v\co v\in C^{(0)}\}\]
of $C$ with cardinality $2^{n+1}$. Note that each open cube 
$U_v$ does not intersect an opposite pair of codimension one faces of $C$. 
By taking the Cartesian product $U'_v$ of each $U_v$ with $(-1,2)^{m-n-1}$, we obtain a finite regular open cover 
\[\UU'=\{U'_v\mid v\in C^{(0)}\}\] of $[0,1]^m$. 
If $\UU'$ is refined by another finite open cover $\VV$ of $[0,1]^m$ with order at most $n+1$, then the intersection of the elements in $\VV$ with $\bR^{n+1}\sse\bR^m$ is a finite open cover of $C=[0,1]^{n+1}$ with order at most $n+1$. 
This violates Lebesgue's Covering Theorem
(Lemma~\ref{lem:dim-facts}), and
the claim is thus proved.

Let us now consider a good ball $Q$ in $M$, which comes with an embedding 
\[
\phi\co \bR^m\longrightarrow M\]
satisfying $\phi[0,1]^m=\cl Q$.
By applying the above claim, we obtain a finite regular open cover of $\cl Q$ that cannot be refined by a finite open cover with order at most $n+1$. This contradicts condition (\ref{p:dimB}), which we have assumed. 
\end{proof}

Note that the cardinalities of covers $\UU$ and $\bigcup_j \VV^j$ in condition (\ref{p:dimB}) of the above lemma are explicitly bounded above by $2^{n+1}$ and $(n+1)2^{n+1}$, respectively. 
Note also that conditions such as
\[
\cl W\sse U_1\cup \cdots \cup U_{2^{n+1}}\]
are expressible in the $\agape$ language. 
It is therefore clear that  condition (\ref{p:dimB})
is expressible in this language, for each fixed positive integer $n$.
As a consequence, we obtain Theorem~\ref{thm:dim}.

\subsection{Parametrizing balls in \texorpdfstring{$M$}{M} in dimension two and higher}
For the proof of Theorem~\ref{thm:output-balls}, let us consider the quotient map
 \[\pr:\bR\longrightarrow \bR/\sq\bZ\] defined by
 \[x\mapsto [x]:=x+\sq\Z.\]
The image of $\bZ$ is dense in the circle $\bR/\sq\bZ$, equipped with the natural cyclic order.
The expression $\sqrt{2}$ will be regarded as a (definable) constant symbol in $L_{\agape}$.
We have chosen this value for concreteness, but
 for our purpose
 we could use an arbitrary irrational number that is definable without parameters
in arithmetic. 
There exists a definable function $\on{ang}(\rho_1,\rho_2)$ satifying
\[
r=\on{ang}(r_1,r_2)\]
if and only if the (unsigned) angular metric between $[r_1]$ and $[r_2]$ is $r\in[0,\sqrt{2})$.

Let us also define an $L_{\agape}$ formula 
\[\on{fcov}(u,v_0,\ldots,v_n)\equiv (\cl u)\subseteq\bigcup_{i=0}^n v_i\wedge\bigwedge_{i=0}^n\on{fincomp}(v_{i}).
\] We also use the formula
\[
\on{clshrink}(v_0,\ldots,v_n,v_0',\ldots,v_n')\equiv\bigwedge_{i=0}^n \cl v_i'\sse v_i.\]
We will equip $M$ with a compatible metric $d$, and denote by $d_\infty$ the induced uniform metric on the homeomorphism group. We have the following characterization of uniform convergence:
\begin{lem}\label{lem:unif-conv}
Let $U$ be a regular open set in $M$  such that $\cl U\sse \inte M$, and let 
\[
F_1\supseteq F_2\supseteq\cdots\]
be a sequence of subsets of $\Homeo(M)$ such that each $f\in F_1$ setwise stabilizes $U$.
Then the following two conditions are equivalent.
\begin{enumerate}[(A)]
    \item 
We have 
\[
\lim_{i\to\infty} \sup\{d_\infty(f\restriction_U,\Id\restriction_U)\mid f\in F_i\}=0.\]
\item Suppose we have two tuples of regular open sets 
    \[\underline{V}=(V_0,\ldots,V_n),\quad \underline{V}'=(V_0',\ldots,V_n')\] such that
    \[
    \on{fcov}(U,\underline{V})\wedge\on{fcov}(U,\underline{V}')\wedge \on{clshrink}(\underline{V},\underline{V}').\]
    Then there exist some $i\in\omega$ such that
    whenever a pair $(\hat V',\hat V)$ belongs to 
\[A:=\left\{(\hat V',\hat V)\in\bigcup_{j=0}^n \left(\pi_0(V_j')\times\pi_0(V_j)\right)
\middle\vert\; \hat V'\sse \hat V\right\},\]
each  $f\in F_i$ satisfies  \[f(\hat V'\cap \cl U)\sse \hat V.\]
\end{enumerate}
\end{lem}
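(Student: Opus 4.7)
For (A) $\Rightarrow$ (B), fix a pair of tuples $(\underline V,\underline V')$ satisfying the stated hypotheses. Every $f\in F_1$ is a homeomorphism of $M$ with $f(U)=U$, hence $f(\cl U)=\cl U$, so by continuity $\sup_{x\in U}d(f(x),x)=\sup_{x\in\cl U}d(f(x),x)$. For each pair $(\hat V',\hat V)\in A$ the set $\cl\hat V'$ is connected and satisfies $\cl\hat V'\subseteq\cl V_j'\subseteq V_j$, so it is contained in a single component of $V_j$, necessarily $\hat V$. The compact set $\cl\hat V'\cap\cl U$ is disjoint from the closed set $M\setminus\hat V$, so $\delta_{\hat V',\hat V}:=d(\cl\hat V'\cap\cl U,\,M\setminus\hat V)$ is strictly positive (with the convention $+\infty$ when $\hat V=M$). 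Finiteness of each $\pi_0(V_j)$ and $\pi_0(V_j')$ makes $A$ finite, so $\delta:=\min_A\delta_{\hat V',\hat V}>0$. By (A), pick $i$ large enough that $\sup\{d_\infty(f\restriction_U,\Id\restriction_U):f\in F_i\}<\delta$; for any such $f$ and any $x\in\hat V'\cap\cl U$ we then have $d(f(x),x)<\delta_{\hat V',\hat V}$, forcing $f(x)\in\hat V$.

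For the converse I argue contrapositively. Suppose (A) fails: some $\epsilon>0$ witnesses that for every $i$ there exist $f_i\in F_i$ and $x_i\in U$ with $d(f_i(x_i),x_i)\ge\epsilon$. Since $\cl U\subseteq\inte M$ is compact, cover it by finitely many connected regular open coordinate balls $B_1,\dots,B_N$ of diameter strictly less than $\epsilon$. For each $k$ replace $B_k$ by a concentric smaller connected regular open ball $B_k'\subseteq B_k$ with $\cl B_k'\subseteq B_k$, arranging (by taking the original $B_k$ slightly redundant) that $\{B_k'\}_{k=1}^N$ still covers $\cl U$. Setting $n:=N-1$, $V_j:=B_{j+1}$, and $V_j':=B_{j+1}'$ produces tuples satisfying $\on{fcov}(U,\underline V)\wedge\on{fcov}(U,\underline V')\wedge\on{clshrink}(\underline V,\underline V')$, and since each $V_j$ and $V_j'$ is connected, $A$ consists of the $N$ pairs $(V_j',V_j)$. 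For each $i$, choose $j$ with $x_i\in V_j'=B_{j+1}'$; then $\diam V_j=\diam B_{j+1}<\epsilon\le d(f_i(x_i),x_i)$ forces $f_i(x_i)\notin V_j=\hat V$, so $f_i(\hat V'\cap\cl U)\not\subseteq\hat V$. No index $i$ can therefore satisfy the conclusion of (B) for this choice of tuples, so (B) fails.

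The argument uses no deep ingredient; the main delicate points are verifying that a homeomorphism stabilizing $U$ also stabilizes $\cl U$, that the connectedness of $\cl\hat V'$ combined with $\cl V_j'\subseteq V_j$ forces $\cl\hat V'\subseteq\hat V$, and that in the contrapositive direction one can produce the required small-diameter connected regular open cover of $\cl U$ inside $\inte M$ using coordinate balls. The finiteness of components in each $V_j$ and $V_j'$ is precisely what allows extraction of a uniform positive lower bound $\delta$ in the forward direction.
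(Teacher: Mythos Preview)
Your forward direction (A) $\Rightarrow$ (B) is correct and matches the paper's argument essentially verbatim: both extract a positive lower bound from the finite set $A$ (the paper uses $\inf\{d(\cl\hat V',M\setminus\hat V)\}$, you use the slightly sharper $d(\cl\hat V'\cap\cl U,M\setminus\hat V)$) and then choose $i$ accordingly.

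The converse, however, has a genuine gap. You write ``Setting $n:=N-1$'', but $n$ is not a free parameter here: throughout this section $n=\dim M$ is fixed, and $\on{fcov}(u,v_0,\ldots,v_n)$ is a formula with a specific arity (this is precisely what makes condition (B) first--order expressible, which is the whole point of the lemma for its use in Lemma~\ref{lem:flow}). So to falsify (B) you must produce tuples $\underline V,\underline V'$ of length exactly $\dim M+1$, not of length $N$ equal to the size of an arbitrary ball cover. Your construction, which places one ball in each slot, does not respect this constraint once $N>\dim M+1$.

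The paper resolves this by invoking dimension theory: starting from a fine cover $\UU$ of $\cl U$ by small regular open sets, it applies Lemma~\ref{lem:dim} (essentially Ostrand's theorem) to reorganize $\UU$ into $n+1$ pairwise--disjoint families $\VV^0,\ldots,\VV^n$ that still cover $\cl U$ and whose members have diameter at most $2\epsilon$; these become the $V_j$, and a further shrinking (Lemma~\ref{lem:refine}) gives the $V_j'$. This use of the topological dimension of $M$ is the missing ingredient in your argument, and is not merely cosmetic: without it there is no reason a cover by small connected sets can be packed into only $\dim M+1$ slots with finitely many components each.
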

\begin{proof}
Let us assume part (A), and also the hypotheses of (B). We set
\[
\epsilon_0:=\inf\left\{ d(\cl \hat V', M\setminus\hat V)\mid (\hat V',\hat V)\in A\right\},\]
which is positive since $A$ is finite.
Choosing $i$ so that 
\[
d_\infty(f\restriction_U,\Id\restriction_U)<\epsilon_0\]
for all $f\in F_i$, we obtain the conclusion.

Conversely, we assume the condition (B) and pick an arbitrary $\epsilon>0$.
Let $\UU$ be a finite cover of $\cl U$ by regular open sets with radius less than $\epsilon$.
Applying Lemma~\ref{lem:dim} (after replacing the number $2^{n+1}$ in the lemma by the size of $\UU$), we obtain a tuple of regular open sets
\[\underline{V}=(V_0,\ldots,V_n)\]
 such that every connected component of each $V_j$ has diameter at most $2\epsilon$,
and such that $\on{fcov}(U,\underline{V})$ holds.
By Lemma~\ref{lem:refine} and by compactness of $\cl U$, 
we obtain 
\[\underline{V}'=(V_0',\ldots,V_n')\] such that
\[\on{fcov}(U,\underline{V}')\wedge \on{clshrink}(\underline{V},\underline{V}').\]
Pick $i\in\omega$ as given by the condition (B),
and let $f\in F_i$ and $x\in \cl U$ be arbitrary. 
Since there exists some $(\hat V',\hat V)\in A$ 
such that $x\in \hat V'$, 
we see that
\[
d(x,f(x))\le \diam \hat V\le 2\epsilon.\]
This implies that $d_\infty(f\restriction_U, \Id\restriction_U)\le 2\epsilon$
and that condition (A) holds.
\end{proof}

We now interpret non-integral powers of group elements, in the following sense:
    \begin{lem}\label{lem:flow}There exist formulae
    \[\conv(u,\gamma,\rho,\delta), \quad\on{flow}(u,\gamma)\]
     such that the following hold
     for each $(M,G)\in\cM_{(vol)}$.
    \begin{enumerate}
    \item 
    For group elements $\{g,h\}\sse G$, a regular open set $U\in\ro(M)$, and
    a real number $r\in\bR$
     satisfying $\cl U\sse \inte M$ and \[g(U)=U=h(U),\] 
    we have \[\models \conv(U,g,r,h)\] if and only if  
      \[\lim_{\delta\to+0}\sup\left\{d_\infty(g^s\restriction_U,h\restriction_U)\mid s\in\bZ\text{ and }\on{ang}(s,r)<\delta\right\}=0.\]
\item 
    For  
    $g\in G$ and $U\in\ro(M)$
    satisfying $\cl U\sse \inte M$ and $g(U)=U$, 
    we have \[\models \on{flow}(U,g)\] if and only if
    there exists a unique topological flow
\[
\Phi=\Phi_{U,g}\co \bR/\sq\bZ\times U\longrightarrow U\]
such that, with the notation $\Phi([t],p)=\Phi^{t}(p)$,
we have the conditions below:
\begin{itemize}
    \item for each $m\in\bZ$, we have $\Phi^m=g^m\restriction_U$;
    \item the map $[t]\mapsto \Phi^{t}$ is a topological embedding of $\bR/\sqrt{2}\bZ$ into the group
    \[G\restriction_U:=\{h\restriction_U\mid h\in G\text{ and }h(U)=U\}\le\Homeo(U);\]
    \item for each $[t]\ne[0]$, we have $\fix \Phi^{t}\cap U=\varnothing$.
\end{itemize}
In this case, for $r\in \bR$ and $p\in U$, the map
\[
(U,g,r,p)\mapsto \Phi^r_{U,g}(p)\]
is definable.
\item If  $\models\on{flow}(U,g)\wedge \on{flow}(V,g)$, 
then for $p\in U\cap V$ and $r\in \bR$, we have
\[
\Phi^r_{U,g}(p)=\Phi^r_{V,g}(p).\]
    	\end{enumerate}
    \end{lem}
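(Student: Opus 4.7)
The plan is to build $\on{conv}$ directly from Lemma~\ref{lem:unif-conv} and to express $\on{flow}$ in terms of $\on{conv}$, then derive part~(3) from uniqueness of uniform limits.

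For part~(1), under the hypotheses $g(U)=U=h(U)$ and $\cl U\sse\inte M$, each member of the family $F_i:=\{h^{-1}g^s\mid s\in\bZ,\ \on{ang}(s,r)<1/i\}$ setwise stabilizes $U$, and the convergence assertion in the lemma is exactly condition~(A) of Lemma~\ref{lem:unif-conv} applied to the $F_i$, after noting the identity $d_\infty(g^s\restriction_U,h\restriction_U)=d_\infty(h^{-1}g^s\restriction_U,\Id\restriction_U)$. I therefore take $\on{conv}(u,\gamma,\rho,\delta)$ to be the formula asserting: for every pair of tuples $\underline v,\underline v'$ satisfying $\on{fcov}(u,\underline v)\wedge\on{fcov}(u,\underline v')\wedge\on{clshrink}(\underline v,\underline v')$, there exists $\sigma>0$ such that for every integer $\alpha$ with $\on{ang}(\alpha,\rho)<\sigma$ and every nested pair $(\hat v',\hat v)$ as in the set $A$ of Lemma~\ref{lem:unif-conv}, one has $(\delta^{-1}\gamma^\alpha)(\hat v'\cap\cl u)\sse\hat v$ (pointwise). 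This is a well-formed $L_{\agape}$ formula since $\gamma^\alpha$ is interpretable via the exponentiation formula from the end of Section~\ref{sec:points}, and $\on{ang}$, $\on{fcov}$, $\on{clshrink}$ are already in hand. Condition~(B) of Lemma~\ref{lem:unif-conv} yields the required biconditional.

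For part~(2), uniqueness of uniform limits implies that any $\delta\in G$ witnessing $\on{conv}(U,g,r,\delta)$ is unique on $U$. I define $\on{flow}(u,\gamma)$ as the conjunction of: (i) $\gamma(u)=u$ and $\cl u\sse\inte\bM$; (ii) for every $\rho\in\bR$ there exists $\delta\in\bG$ with $\delta(u)=u$ and $\on{conv}(u,\gamma,\rho,\delta)$; (iii) for every such $\delta$ arising from a $\rho$ with $[\rho]\neq[0]$ in $\bR/\sqrt{2}\bZ$, one has $\fix\delta\cap u=\varnothing$. Clause~(ii) produces, via uniqueness, a well-defined map $[\rho]\mapsto\delta\restriction_u$, which is a group homomorphism $\bR/\sqrt{2}\bZ\to G\restriction_U$ (the composition of two convergent sequences converges to the product), continuous by a standard $\sigma$--$\epsilon$ argument using $\on{conv}$, and injective by clause~(iii); compactness of $\bR/\sqrt{2}\bZ$ then upgrades it to the topological embedding required. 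The identity $\Phi^m=g^m\restriction_U$ is automatic because $\on{ang}(m,m)=0$ forces $\delta\restriction_U=g^m\restriction_U$. The definability of $\Phi^r_{U,g}(p)=q$ is captured by the formula $\exists\delta\,[\on{conv}(u,\gamma,\rho,\delta)\wedge\delta(\pi)=\pi']$.

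Part~(3) then follows immediately: if both $\on{flow}(U,g)$ and $\on{flow}(V,g)$ hold and $p\in U\cap V$, pick any sequence of integers $s_n$ with $\on{ang}(s_n,r)\to 0$; uniform convergence on each of $U$ and $V$ gives $g^{s_n}(p)\to\Phi^r_{U,g}(p)$ and $g^{s_n}(p)\to\Phi^r_{V,g}(p)$, so the two limits must agree. The main obstacle will be the verification inside part~(2) that the candidate map $[\rho]\mapsto\delta\restriction_u$ is genuinely a topological embedding of $\bR/\sqrt{2}\bZ$: existence of $\delta$ must hold for \emph{every} real $\rho$, continuity must be extracted carefully from the approximation form of $\on{conv}$, and, most importantly, clause~(iii) must be shown to rule out the image being a proper closed (hence finite cyclic) subgroup, since any nontrivial kernel element would be a $\delta$ witnessing $\on{conv}$ for some $[\rho]\neq[0]$ with $\delta\restriction_u=\Id\restriction_u$, contradicting the no-fixed-point clause.
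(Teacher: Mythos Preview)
Your proposal is correct and follows essentially the same approach as the paper: build $\on{conv}$ from Lemma~\ref{lem:unif-conv} applied to the sets $F_i=\{h^{-1}g^s\mid s\in\bZ,\ \on{ang}(s,r)<1/i\}$, define $\on{flow}$ by requiring a witness $\delta$ for every $\rho$ together with the no-fixed-point clause for $[\rho]\neq[0]$ (the paper's formula is literally $(\forall\rho\,\exists\delta)[\on{conv}(u,\gamma,\rho,\delta)\wedge(\rho\in\sqrt{2}\bZ\vee\fix\delta\cap u=\varnothing)]$), and derive part~(3) from uniqueness of limits. One small imprecision: the displayed identity $d_\infty(g^s\restriction_U,h\restriction_U)=d_\infty(h^{-1}g^s\restriction_U,\Id\restriction_U)$ is not literally an equality unless $h$ is an isometry, but since $h$ and $h^{-1}$ are uniformly continuous on the compact set $\cl U$, one supremum tends to zero iff the other does, which is all you need.
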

\begin{proof}
Applying Lemma~\ref{lem:unif-conv} for the definable set
\[
F_i:=\{h^{-1}g^s\mid s\in\bN\text{ and }\on{ang}(s,r)<1/i\}\sse G,\]
we immediately obtain a desired formula $\on{conv}(\gamma,\delta,u,\rho)$.

It is straightforward to check 
\[\on{flow}(u,\gamma)\equiv (\forall\rho \exists \delta)
[\conv(u,\gamma,\rho,\delta)\wedge (\rho\in\sqrt2\bZ\vee \fix\delta\cap u=\varnothing
)]\]
satisfies the desired conditions in (2). 
In particular, the uniquenss is a consequence of the fact that the formula $\on{conv}(U,g,r,h)$ uniquely determines the restriction of $h$ on $U$, as an approximation of the form 
\[\{g^{k_n}\restriction_U\}\]
satisfying \[k_n\longrightarrow r\] in $\bR/\sqrt{2}\bZ$. 
The definability of the flow in (2) and the independence on the choice of $U$ in part (3) also follow by the same reason, completing the proof.
    \end{proof} 

In the situation of Lemma~\ref{lem:flow}, we will say that $g$ defines a \emph{circular flow} on the open set $U$.
When we have $\conv(U,g,r,h)$, the element $g$ is viewed as an irrational rotation through a specified angle, and $h$ is the rotation of the $r$--multiple of this angle.
By the definability of $\Phi_{U,g}^r(p)$ for $p\in U$, we are justified to use an expression such as
\[
\Phi_{u,\gamma}^\rho(\pi)=\pi'\]
in an $L_{\agape}$ formula with the hypothesis that $\pi\in u$.
When the meaning is clear, we also use the more succinct notation
\[
g^r:=\Phi_{U,g}^r.\]
We are now ready to complete the proof of Theorem~\ref{thm:output-balls}: 

    \begin{proof}[Proof of Theorem~\ref{thm:output-balls}]
By Lemma~\ref{lem:flow}, we have an $L_{\agape}$ formula $\on{flows}_n(U,\underline{g},p)$ that expresses the following:
\begin{itemize}
    \item there exists some $\underline{V}=\{V_i\}$ such that 
    \[
    \models\cl V_i\sse\inte M\wedge g_i(V_i)=V_i\wedge
    \on{flow}(V_i,g_i)\] for each $i$, 
    and such that
    \[
    p\in U\in \cl U\sse \cap_i V_i;\]
    \item there exists a continuous bijection $[0,1]^n\longrightarrow \cl U$ defined by
    \[
    (r_1,\ldots,r_n)\mapsto \prod_i g_i^{r_i}(p);\]
    \item For all $r_i\in[0,1]$ and for all permutation $\sigma$ of $\{1,\ldots,n\}$, we have
    \[
    \prod_i g_i^{r_i}(p)=\prod_i g_{\sigma(i)}^{r_{\sigma(i)}}(p).\]
\end{itemize}
Here, it is implicitly required that
\[
\prod_{i=1}^{j-1} g_i^{r_i}(p)\in\cl U\]
for all $j\le n$, so that 
\[
\prod_{i=1}^{j} g_i^{r_i}(p)= g_{j}^{r_j}\circ\prod_{i=1}^{j-1} g_i^{r_i}(p)\]
is well-defined. The formula $\on{Param}_n$ is simply obtained 
from the map \[(U,g_i,r_i,p)\mapsto g_i^{r_i}(p).\] This proves part (\ref{p:flows1}).

For part (\ref{p:flows2}), we may identify $\cl U = Q^n(1)$ and $V=\inte Q^n(R)$ for some sufficiently large $R$.
We can then choose $n$ independent circular flows such that each flow rotates $U$ in some compact solid torus $B^{n-1}(1)\times S^1$ with the rotation number $1/\sqrt{2}$, and such that on the outside of $V$ the restrictions of the flows are the identity; see Figure~\ref{f:flow} (a), where a suitable homeomorphism is applied to $U$ for illustrative purposes. Such choices of flows will yield the desired conclusion.
\end{proof}

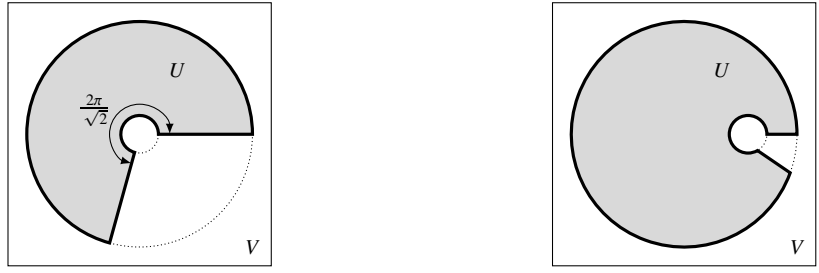
\begin{figure}[htb!]
\centering
\subcaptionbox{The ball $U$ is exactly $1/\sqrt2$ fraction of the domain of a flow.}
[.45\textwidth]{
\begin{tikzpicture}[scale=.5]
\draw[very thick,fill=gray!30] (0:.5) -- (0:3)   arc(0:254.558:3) -- (254.558:.5)  arc(254.558:0:.5) -- cycle;
\draw[densely dotted] (254.558:.5) arc(254.558:360:.5);
\draw[densely dotted] (254.558:3) arc(254.558:360:3);
\draw[latex-latex]  (0:.8) arc(0:254.558:.8) node[midway,left]{\tiny $\frac{2\pi}{\sqrt{2}}$};
\draw (1,1.7) node [] {\tiny $U$};
\draw (-3.5,-3.5) -- (-3.5,3.5)  -- (3.5,3.5)  -- (3.5,-3.5) -- cycle;
\draw (3,-3) node [] {\tiny $V$};
\end{tikzpicture}}
\qquad
\subcaptionbox{There may not be enough room for a desired measure--preserving flow.}[.45\textwidth]
{
\begin{tikzpicture}[scale=.5]
\draw[densely dotted] (1.7,0) circle (0.5);
\draw[densely dotted] (0,0) circle (3);
\draw[very thick,fill=gray!30] (2.2,0) arc(0:300:.5) -- (340:3) arc (340:0:3) -- cycle;
\draw (1,1.7) node [] {\tiny $U$};
\draw (-3.5,-3.5) -- (-3.5,3.5)  -- (3.5,3.5)  -- (3.5,-3.5) -- cycle;
\draw (3,-3) node [] {\tiny $V$};
\end{tikzpicture}}
\caption{The proof of Theorem~\ref{thm:output-balls} (2)
and a potential issue when $U$ is not ``spaciously collared''.}
\label{f:flow}
\end{figure}
We remark that in the measure preserving case, if $\vol(U)/\vol(V)$ is not sufficiently small, then there may not be enough room for a solid torus inside $V$ such that  $\cl U$ occupies $(1/\sqrt{2})$--fraction of the torus.
For instance, one may consider an annulus that is homeomorphic to $S^1\times I$, but which is equipped with a measure that is not the product of the Lebesgue measures on the two factors. Thus, the annulus may be ``throttled" in some interval as in Figure~\ref{f:flow} (b), and thus there may be no measure preserving flow that globally rotates the annulus.

\section{Parametrization of collar neighborhoods}\label{sec:collar}
Let us fix an integer $n>1$.
We now describe a definable parametrization of collar neighborhoods of the boundary of a compact $n$--manifold.
More specifically, we will establish the following.

\begin{thm}\label{thm:collar}
Then there exist formulae
\[
\on{collar}(\underline{\kappa}),\quad \on{collar-embed}(\underline{\kappa},\pi,\rho,\pi')\]
for some tuple $\underline{\kappa}$ of variables in the $\agape$ language
such that  each pair $(M,G)\in\cM_{(\vol)}$ with $\dim M=n$
satisfies the following:
\begin{enumerate}
    \item\label{p:col-suff} We have that  $\models(\exists{\underline{\kappa}})[\collar(\underline{\kappa})]$.
    \item\label{p:col-nece} Let $\underline{K}$ be a tuple of elements in $\agape(M,G)$  satisfying
    \[ \collar(\underline{K}).\]
    Then there exists a unique collar embedding
    \[u=u[\underline{K}]\co \partial M\times[0,1)\longrightarrow M\]
    of $\partial M$ such that for all points $p\in\partial M$ and $q\in M$, and for all $r\in[0,1)$ we have
    \[u(p,r)=q\Longleftrightarrow\left(\agape(M,G) \models\on{collar-embed}(\underline{K},p,r,q)\right).\]
    \end{enumerate}
\end{thm}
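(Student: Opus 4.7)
The plan is to reduce the parametrization of a collar to the gluing of finitely many \emph{boundary half-ball} parametrizations, each adapted from Theorem~\ref{thm:output-balls}. A boundary half-ball is a regular open set $W \subseteq M$ with $W \cong B^{n-1} \times [0, 1)$ such that $W \cap \partial M$ corresponds to $B^{n-1} \times \{0\}$. I will first develop a half-ball analog of Theorem~\ref{thm:output-balls}, then patch finitely many of them into a global collar, appealing to Brown's collar theorem~\cite{Brown1962} for existence.

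\textbf{Stage 1: half-ball parametrization.} The tangential $B^{n-1}$ directions are handled by $n-1$ commuting circular flows supported in a neighborhood of $W$, exactly as in Theorem~\ref{thm:output-balls}. The new ingredient is the radial $[0,1)$ coordinate, which has a fixed endpoint on $\partial M$ and admits no circular flow. I adjoin to the local data two ``radial'' homeomorphisms $h_1, h_2 \in G$, each fixing $\partial M$ pointwise, setwise stabilizing $W$, and strictly contracting $W$ toward $W \cap \partial M$ at rates whose logarithms are rationally independent---concretely, acting on $W$ as $(x, t) \mapsto (x, t/2)$ and $(x, t) \mapsto (x, t/2^{\sqrt 2})$ in local collar coordinates. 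Rational independence ensures that integer iterates $h_1^k h_2^m$ produce contraction factors dense in $(0, \infty)$. A translation-type analog of Lemma~\ref{lem:flow}, combined with the uniform-convergence criterion of Lemma~\ref{lem:unif-conv} applied to these iterates, defines $h^s|_W$ for each $s \in [0, \infty)$ as a uniquely determined uniform limit. The resulting formula $\on{halfflow}_n$ and its parametrization formula encode a homeomorphism $\Psi \colon B^{n-1} \times [0, 1) \to W$ whose restriction to $B^{n-1} \times \{0\}$ maps homeomorphically onto $W \cap \partial M$. Continuity of $\Psi$ at the boundary face is enforced by requiring every orbit $\{h_1^k(q)\}_{k \ge 0}$ to converge to a unique boundary point, using the point-limit machinery $\rho_2$ of Section~\ref{sec:points}.

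\textbf{Stage 2: global assembly.} By compactness of $\partial M$ and Brown's collar theorem, $\partial M$ admits a finite cover by boundary half-balls $W_1, \ldots, W_m$. Take $\underline{\kappa}$ to be the tuple $(W_i, g_{i,1}, \ldots, g_{i,n-1}, h_{i,1}, h_{i,2}, p_i)_{1 \le i \le m}$, where $p_i \in W_i \cap \partial M$ is a basepoint. The formula $\on{collar}(\underline{\kappa})$ asserts: (a) each local tuple satisfies $\on{halfflow}_n$; (b) the $W_i$ cover $\partial M$; and (c) the partial collar maps $u_i \colon (W_i \cap \partial M) \times [0, 1) \to W_i$ induced by the $\Psi_i$ agree on overlaps, i.e., $u_i(p, r) = u_j(p, r)$ for all $p \in W_i \cap W_j \cap \partial M$ and $r \in [0, 1)$. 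The formula $\on{collar-embed}(\underline{\kappa}, \pi, \rho, \pi')$ holds if and only if some $i$ witnesses $\pi \in W_i \cap \partial M$ and $\pi' = u_i(\pi, \rho)$; well-definedness follows from (c), and the resulting $u \colon \partial M \times [0, 1) \to M$ is a collar embedding because each $\Psi_i$ is. For existence (part~(\ref{p:col-suff})), start from a Brown collar $u_0 \colon \partial M \times [0, 1) \to M$, cover $\partial M$ by small $(n-1)$-ball charts $V_i$, set $W_i := u_0(V_i \times [0, 1/2))$, and transfer the tangential and radial flows from the model $V_i \times [0, 1/2)$ via $u_0$; compatibility is then automatic.

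\textbf{Main obstacle.} The principal difficulty is Stage~1, specifically defining the radial flow cleanly in $L_{\agape}$. Unlike the circular case, radial contractions have $\partial M$ as a fixed set and lack periodic symmetry, so a single generator does not uniquely determine a one-parameter subgroup via uniform approximation by integer iterates. My proposed resolution---two generators with rationally independent logarithmic contraction rates, mirroring the role of $\sqrt 2$ in Lemma~\ref{lem:flow}---allows $h^s$ to be recovered as the uniform limit of the dense set $\{h_1^k h_2^m\}_{k,m \in \bZ}$. A secondary technical point is ensuring continuous extension of $\Psi$ to the boundary face $B^{n-1} \times \{0\}$, which reduces to continuous dependence on the tangential coordinate of $\lim_{k \to \infty} h_1^k(q)$ and is handled by the $\rho_2$ machinery.
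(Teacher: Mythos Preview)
Your approach has a genuine gap in the measure-preserving case $(M,G)\in\cM_{\vol}$. The radial homeomorphisms $h_1,h_2\in G$ you postulate are required to act on the half-ball $W\cong B^{n-1}\times[0,1)$ as $(x,t)\mapsto(x,t/2)$ and $(x,t)\mapsto(x,t/2^{\sqrt 2})$. Any such map satisfies $h_i(W)\subsetneq W$ with $W\setminus h_i(W)$ a nonempty open set. Since $\mu$ has full support, $\mu(W\setminus h_i(W))>0$; but if $h_i$ were $\mu$--preserving we would have $\mu(W)=\mu(h_i(W))+\mu(W\setminus h_i(W))=\mu(W)+\mu(W\setminus h_i(W))$, a contradiction. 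Thus no element of $G\le\Homeo_\mu(M)$ can realize the contraction you need, and Stage~1 collapses for $\cM_{\vol}$. This is not a technicality that can be patched by choosing cleverer $h_i$: any $h\in G$ that maps $W$ properly into itself is excluded by the same argument, and without that property the two-generator density trick cannot recover the full radial coordinate.

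The paper's proof is organized precisely to avoid a radial flow. Instead of parametrizing the normal direction continuously, it decomposes $I^{n-1}\times[0,2)$ into a dyadic tower of \emph{full} $n$--cubes $S^w=\bar S^w\times[2-1/2^{k-1},\,2-1/2^k]$ indexed by words $w\in\Lambda^k$, with diameters tending to zero as they approach the boundary level. Each cube $V_{i,j}^w$ in $M$ is parametrized by the already-available $\on{flows}_n$ from Theorem~\ref{thm:output-balls}, which uses only circular flows and hence admits measure-preserving witnesses. The data $\underline K$ records the cubes, their basepoints, discrete shifting homeomorphisms $\on{hor}_i,\on{vert}_i$ that move basepoints between levels (these need only satisfy point constraints, so $k$--transitivity suffices), and finitely many flow-tuples $\underline s_i^{\epsilon,v}$ that uniformly parametrize all cubes with a given parity and last letter. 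Continuity of the glued map at the boundary is enforced by a first-order convergence condition on $\Psi_{i,j}^w\circ(\sigma^w)^{-1}$ as the word length grows. In the $\cM_{\vol}$ case an extra buffer parameter $n_0$ ensures that the supports of the flow generators are sufficiently separated for the measure-preserving version of Theorem~\ref{thm:output-balls}(\ref{p:flows2}) to apply.

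For $\cM$ alone your outline is plausible, though you still need to adapt Theorem~\ref{thm:output-balls} to a basepoint on $\partial M$ (the existing statement places flows in $\inte M$), and the injectivity of the glued $u$ across charts requires more than pairwise agreement of the $u_i$ on overlaps. But the decisive obstruction is the one above: the theorem is stated for $\cM_{(\vol)}$, and your Stage~1 cannot produce witnesses in the volume-preserving class.
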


\subsection{Decomposition of a unit cube}\label{ss:brick}
Let us fix $n>1$. We will use a certain partition of a cube to parametrize a collar neighborhood of $\partial M$. 
We set
\begin{align*}
&\Lambda:=\{0,1\}^{n-1}\sse I^{n-1},\\ 
&\mathbf{0}:=(0,\ldots,0), \quad\mathbf{1}:=(1,\ldots,1)\in \Lambda,\\
&\mathbf{0}^k, \mathbf{1}^k\in \Lambda^k\sse (I^{n-1})^k&\text{ for }k>0,\\
&\on{len}(w):=k&\text{ for }w\in\Lambda^k,\\
&\pa(m):=m-2\floor{m/2}&\text{ for }m\in\omega.
\end{align*}
For convention, we also let
\[
\Lambda^0=\{\mathbf{0}^0\}=\{\varnothing\}.\]
By abuse of notation, we  move or remove parantheses rather freely and often write
\[
X^{(v_1,\ldots,v_k)}=X^{(v_1,\ldots,v_{k-1}),v_k}=X^{v_1,\ldots,v_k}\]
when the vector $(v_1,\ldots,v_k)$ is used to index certain objects $X^*$.
For each \[w=(v_1,\ldots,v_k)\in \Lambda^k\] with $k\in\omega$, we let 
$\bar S^w$ be the dyadic cube of side length $1/2^k$ that contains the following two points as opposite vertices:
\[
\sum_{i=1}^k {v_i}/{2^i}, \quad \sum_{i=1}^k{v_i}/{2^i}+ {\mathbf{1}}/{2^k}.\]
For instance, we have \[\bar S^\varnothing=I^{n-1},\quad \bar S^{\mathbf{0}}=[0,1/2]^{n-1},\quad
\bar S^{(\mathbf{0},\mathbf{1})}=[1/4,1/2]^{n-1},\] and so on.
We have partitions (with disjoint interiors):
\begin{align*}
I^{n-1} &= \bigcup\left\{ \bar S^w \middle\vert w\in \Lambda^k\right\}\qquad\qquad\qquad\text{ for each }
k\in\omega,\\
I^{n-1}\times[0,2)&=\bigcup\left\{ S^{w}:=\bar S^w\times
\left[
2-\frac1{2^{\on{len}(w)-1}},
2-\frac1{2^{\on{len}(w)}}
\right]
\ \middle\vert\  
w\in \bigcup_{k\in\omega}\Lambda^k\right\}.\end{align*}
We have a unique parametrization
\[\sigma^w\co I^n\longrightarrow S^w\]
of the regular cube $S^w$ obtained by a positive homothety and translation.



\subsection{The condition for a collar neighborhood}
Let us first consider the case that $(M,G)\in\cM$. For a tuple
\[
\underline{K}=(U_i,U_i^0,U_i^1,U_i^{0,v},U_i^{1,v},T_i^0,T_i^1,p_{i,0}^{\varnothing}, \on{hor}_i,\on{vert}_i,s_i^{0,v},s_i^{1,v}\mid 1\le i\le n\text{ and }v\in \Lambda)\]
in the universe of $\agape(M,G)$, we consider the collection of conditions with appropriate notation as itemized in (\ref{p:col-first}) through (\ref{p:col-last}) below; see Figure~\ref{f:collar} for an illustration when $n=2$.

 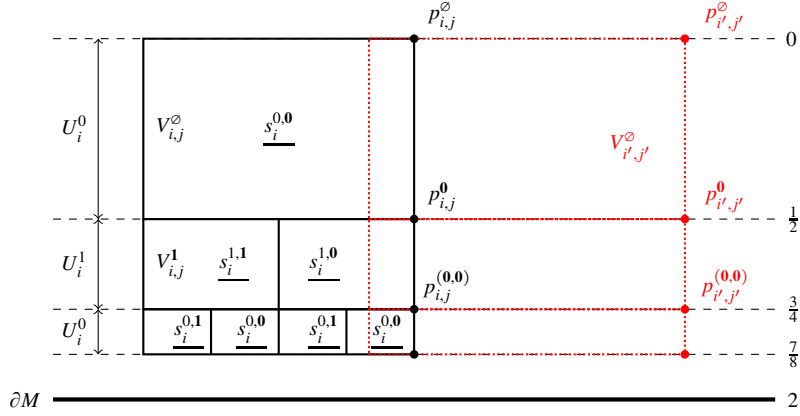
\begin{figure}\centering
\begin{tikzpicture}[scale=1.2]
\tikzstyle {bv}=[black,draw,shape=circle,fill=black,inner sep=1pt]
\tikzstyle {rv}=[red,draw,shape=circle,fill=red,inner sep=1pt]
\draw [ultra thick] (-4,0) node [left] {\tiny $\partial M$} -- (4,0) node [right] {\tiny $2$};
\draw [dashed] (-4,4) -- (4,4) node [right] {\tiny $0$};
\draw [dashed] (-4,2) -- (4,2) node [right] {\tiny $\frac12$};
\draw [dashed] (-4,1) -- (4,1) node [right] {\tiny $\frac34$};
\draw [dashed] (-4,.5) -- (4,.5) node [right] {\tiny $\frac78$};
\draw [thick] (-3,4) -- ++(3,0) -- ++(0,-2) -- ++(-3,0) -- cycle;
\draw (-2.7,3) node [] {\tiny $V_{i,j}^\varnothing$};
\draw (-2.7,1.5) node [] {\tiny $V_{i,j}^{\mathbf{1}}$};
\draw [thick] (-1.5,2) -- ++(0,-1.5);
\draw [thick] (-2.25,1) -- ++(0,-.5);
\draw [thick] (-.75,1) -- ++(0,-.5);
\draw [thick] (-3,2) -- ++(3,0) -- ++(0,-1) -- ++(-3,0) -- cycle;
\draw [thick] (-3,1) -- ++(3,0) -- ++(0,-.5) -- ++(-3,0) -- cycle;
\draw (.31,4.25) node [] {\tiny $p_{i,j}^{\varnothing}$};
\draw (.31,2.25) node [] {\tiny $p_{i,j}^{\mathbf{0}}$};
\draw (.37,1.25) node [] {\tiny $p_{i,j}^{(\mathbf{0},\mathbf{0})}$};
\draw [densely dotted,thick,red] (-.5,4) -- ++(3.5,0) -- ++(0,-2) -- ++(-3.5,0) -- cycle ++ (3,-2) node [] {\tiny $V_{i',j'}^\varnothing$};
\draw [densely dotted,thick,red] (-.5,2) -- ++(3.5,0) -- ++(0,-1) -- ++(-3.5,0) -- cycle;
\draw [densely dotted,thick,red] (-.5,1) -- ++(3.5,0) -- ++(0,-.5) -- ++(-3.5,0) -- cycle;
\draw (3.45,4.25) node [red] {\tiny $p_{i',j'}^{\varnothing}$};
\draw (3.45,2.25) node [red] {\tiny $p_{i',j'}^{\mathbf{0}}$};;
\draw (3.45,1.25) node [red] {\tiny $p_{i',j'}^{(\mathbf{0},\mathbf{0})}$};
\draw (3,4) node [rv] {};
\draw (3,2) node [rv] {};
\draw (3,1) node [rv] {};
\draw (3,.5) node [rv] {};
\draw (0,4) node [bv] {};
\draw (0,2) node [bv] {};
\draw (0,1) node [bv] {};
\draw (0,.5) node [bv] {};
\draw (-1.5,3) node [] {\tiny $\underline{s_i^{0,\mathbf{0}}}$};
\draw (-2,1.5) node [] {\tiny $\underline{s_i^{1,\mathbf{1}}}$};
\draw (-1,1.5) node [] {\tiny $\underline{s_i^{1,\mathbf{0}}}$};
\draw (-2.5,.75) node [] {\tiny $\underline{s_i^{0,\mathbf{1}}}$};
\draw (-1.8,.75) node [] {\tiny $\underline{s_i^{0,\mathbf{0}}}$};
\draw (-1,.75) node [] {\tiny $\underline{s_i^{0,\mathbf{1}}}$};
\draw (-.3,.75) node [] {\tiny $\underline{s_i^{0,\mathbf{0}}}$};
\draw[<->] (-3.5,2) -- (-3.5,4) node [midway,above,left] {\tiny $U_i^0$};
\draw[<->] (-3.5,1) -- (-3.5,2) node [midway,above,left] {\tiny $U_i^1$};
\draw[<->] (-3.5,.5) -- (-3.5,1) node [midway,above,left] {\tiny $U_i^0$};
\end{tikzpicture}
\caption{The condition $\on{COL}(M,G;\underline{K})$.}
\label{f:collar}
\end{figure}

\paragraph{\textbf{Condition $\on{COL}(M,G;\underline{K})$}}
\begin{enumerate}[(a)]
    \item\label{p:col-first}
    We have regular open sets $U^*$ and $U_1,\ldots,U_n$ such that
    \[\partial M\sse U^*=\bigcup_{1\le i\le n} U_i,\]
    and such that every regular open neighborhood of $\partial M$ contains $g(U^*)$ for some $g\in G$;
    moreover, each $U_i$ has finitely many components, and the closures of distinct components are disjoint.
    \item\label{p:col-ui}   
    We have dispersed (see Definition~\ref{d:dispersed}) regular open sets 
    \[
    U_i^0,U_i^1, U_i^{0,v},U_i^{1,v}\]
    for each $i\le n$ and $v\in \Lambda$;
    moreover, we have for each $\epsilon\in\{0,1\}$ that 
    \[
    U_i=U_i^0\oplus U_i^1,\quad
    U_i^\epsilon=\oplus_{v\in \Lambda}U_i^{\epsilon,v}.\]
    \item\label{p:col-pij} For each $i\le n$,
    we have $\on{hor}_i,\on{vert}_i\in G$ and 
    \[
    p_{i,0}^{\varnothing}
    \in T_i^0\sse T_i^1\in\PP^{\on{disc}}(\inte M)\]
    such that $T_i^0$ is a nonempty, finite, minimal $\on{hor}_i$--invariant set; moreover,
    the map
    \[(j,k)\mapsto p_{i,j}^{\mathbf{0}^k}:= \on{vert}_i^k \circ \on{hor}_i^j \left(p_{i,0}^{\varnothing}\right)\]
    is a bijection \[\{0,\ldots,\#T_i^0-1\}\times\omega\longrightarrow  T_i^1.\]
 
\item\label{p:col-uij} 
For each $i\le n$ and $j<\#T_i^0$, there exists a unique  $U_{i,j}\in\pi_0 U_i$ 
satisfying 
\[p_{i,j}^{\varnothing}\in\fr U_{i,j}.\]
For each $k\in\omega$, there also exists a unique $U_{i,j}^k\in\pi_0  U_i^{\pa(k)}$ such that
\[p_{i,j}^{\mathbf{0}^k}\in\fr U_{i,j}^k.\]
We further have closure--disjoint unions
\[
U_i=\bigsqcup_j U_{i,j},\qquad U_i^0=\bigsqcup_{j,k} U_{i,j}^{2k},\qquad U_i^1=\bigsqcup_{j,k} U_{i,j}^{2k+1}.\]
    \item\label{p:col-flows0}
    For each $i\le n$, we have $\underline{s}_i^{\varnothing}\in G^n$.
    Setting $V_{i,j}^\varnothing:= U_{i,j}^0$, we also have
    \begin{align*}
    &\models \on{flows}_n\left(V_{i,j}^{\varnothing},\underline{s}_i^{\varnothing},
    p_{i,j}^{\varnothing}\right),\\
    &\Psi_{i,j}^{\varnothing}:=\Psi\left[V_{i,j}^{\varnothing},\underline{s}_i^{\varnothing},
    p_{i,j}^{\varnothing}\right]\co I^n \longrightarrow\cl V_{i,j}^{\varnothing}.\end{align*}
    For all $k>0$ and $(v_1,\ldots,v_k)\in\Lambda^k$ we have that
    \[p_{i,j}^{(v_1,\ldots,v_k)}:=\on{vert}_i^k\circ \Psi_{i,j}^{\varnothing}\left(\sum_{i=1}^k \frac{v_i}{2^i},0\right)\in\fr U_{i,j}^{k-1}\cap\fr U_{i,j}^k.\]
    \item\label{p:col-vij}
    For each $(i,j,w=(v_1,\ldots,v_k))$ in the index set
     \[\II :=\left\{(i,j,w)    \middle\vert    1\le i\le n, 0\le j<\#T_i^0,  w\in \bigcup_{k\in\omega}\Lambda^k\right\},\] 
    there exists a unique \[V_{i,j}^w\in \pi_0 U_i^{\pa(k),v_k},\] the closure of which contains $p_{i,j}^w$.
    \item\label{p:col-flows1}
    For each $i\le n$ and $v\in\Lambda$, we have
     \[\underline{s}_i^{0,v},\underline{s}_i^{1,v}\in G^n.\]
    We further have that
    \begin{align*}
    &\models \on{flows}_n\left(V_{i,j}^{w},\underline{s}_i^{\pa(k),v_k},
    p_{i,j}^{w}\right),\\
    &\Psi_{i,j}^w:=\Psi\left[V_{i,j}^{w},\underline{s}_i^{\pa(k),v_k},
    p_{i,j}^{w}\right]\co I^n \longrightarrow\cl V_{i,j}^{w}.
    \end{align*}
 
   \item\label{p:col-equiv} 
   For each $(i,j,\varnothing)\in\II$, there exists a homeomorphism \[\Psi_{i,j}\co I^{n-1}\times[0,2]\longrightarrow \cl U_{i,j}\] such that for each $w\in\Lambda^k$ we have \[\Psi_{i,j}\restriction_{S^w}=\Psi_{i,j}^w\circ (\sigma^w)^{-1},\] and such that \[\Psi_{i,j}(I^{n-1}\times \{2\})\sse \partial M.\]

    \item\label{p:col-last}
     If $x\in \cl U_{i,j}\cap \cl U_{i',j'}$, then some $v,v'\in I^{n-1}$ and $t\in[0,2]$ satisfy    \[    x = \Psi_{i,j}(v,t)=\Psi_{i',j'}(v',t).\]  
     Moreover, in this case we require that for each $t'\in[0,2]$, we have \[ \Psi_{i,j}(v,t')=\Psi_{i',j'}(v',t').\]
 \end{enumerate}

We now make three claims. First, these conditions are first order expressible. 
Second, these conditions produce a definable collar embedding; for this, we will actualy need only the conditions (\ref{p:col-equiv}) and (\ref{p:col-last}).
Third, every pair $(M,G)\in\cM$ satisfies these conditions with a suitable choice of $\underline{K}$. 

The first point is trivial to check from the preceding results, possibly except for the continuity condition in (\ref{p:col-equiv}) at the level--$2$ subset of $I^{n-1}\times[0,2]$. At such a point $x_0$, we then can simply require the convergence of the values of the form
\[\Psi_{i,j}^w\circ(\sigma^w)^{-1}(x)\]
whenever $x\in S^w$ gets arbitrarily close to $x_0$; we also require the bijectivity of the resulting map onto $\cl U_{i,j}$.
We can now let $\on{collar}(\underline{\kappa})$ be the formula expressing the condition $\on{COL}(M,G;\underline{K})$. 

Regarding the second point, we note the following:
\begin{claim*}\label{cla:col-emb}
Under the hypothesis $\on{COL}(M,G;\underline{K})$, we have a collar embedding
    \[u=u\left[\underline{K}\right]\co \partial M\times[0,2]\longrightarrow M\]
which is unambiguously defined by
    \[u(\Psi_{i,j}(v,2),r)=\Psi_{i,j}(v,r)\]
for all \[(i,j,\varnothing)\in\II,\quad v\in I^{n-1},\quad r\in[0,2].\]
In particular, the image of the level--$2$ set under the map $u$ coincides with $\partial M$.
\end{claim*}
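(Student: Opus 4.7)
The plan is to verify four standard properties of the proposed assignment $u(\Psi_{i,j}(v,2),r):=\Psi_{i,j}(v,r)$: well-definedness on all of $\partial M\times[0,2]$, the identity $u(p,2)=p$ for $p\in\partial M$, injectivity, and continuity. The latter two, combined with compactness of $\partial M\times[0,2]$ and Hausdorffness of $M$, will automatically upgrade $u$ to a topological embedding, establishing the claim.

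I would begin with well-definedness. If $p\in\partial M$ has two representations $p=\Psi_{i,j}(v,2)=\Psi_{i',j'}(v',2)$, then $p\in\cl U_{i,j}\cap\cl U_{i',j'}$, and condition (\ref{p:col-last}) forces $\Psi_{i,j}(v,r)=\Psi_{i',j'}(v',r)$ for every $r\in[0,2]$, so the value of $u(p,r)$ is chart-independent. Next, every $p\in\partial M$ must admit such a representation: by (\ref{p:col-first}), $\partial M\sse U^*=\bigcup_{(i,j)}U_{i,j}$, so $p\in\cl U_{i,j}$ for some $(i,j)$, giving $p=\Psi_{i,j}(v,t)$ for some $(v,t)\in I^{n-1}\times[0,2]$ by (\ref{p:col-equiv}). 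If $t<2$, then $(v,t)\in S^w$ for some finite-length $w$, whence $p\in\cl V_{i,j}^w$; but the formula $\on{flows}_n$ used to build $\Psi_{i,j}^w$ enforces $\cl V_{i,j}^w\sse\inte M$ (cf.~the proof of Theorem~\ref{thm:output-balls}), contradicting $p\in\partial M$. Hence $t=2$, which both completes well-definedness and yields $u(p,2)=p$, so that the image of the level-$2$ set equals $\partial M$.

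For injectivity, suppose $u(p,r)=u(p',r')$. Write $p=\Psi_{i,j}(v,2)$, $p'=\Psi_{i',j'}(v',2)$ and set $x:=\Psi_{i,j}(v,r)=\Psi_{i',j'}(v',r')\in\cl U_{i,j}\cap\cl U_{i',j'}$. Applying (\ref{p:col-last}) to $x$ produces $\tilde v,\tilde v',t$ with $x=\Psi_{i,j}(\tilde v,t)=\Psi_{i',j'}(\tilde v',t)$, and moreover $\Psi_{i,j}(\tilde v,s)=\Psi_{i',j'}(\tilde v',s)$ for all $s\in[0,2]$. Injectivity of the chart homeomorphisms then yields $(v,r)=(\tilde v,t)$ and $(v',r')=(\tilde v',t)$, so $r=r'=t$; specializing the global identity to $s=2$ gives $p=\Psi_{i,j}(\tilde v,2)=\Psi_{i',j'}(\tilde v',2)=p'$.

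Continuity is the point requiring the most care, though the argument remains routine. Given $(p,r)\in\partial M\times[0,2]$, there is a unique $(i,j)$ with $p\in U_{i,j}$, since $U^*$ is the closure-disjoint union of the open sets $U_{i,j}$ by (\ref{p:col-first})--(\ref{p:col-uij}). Hence $U_{i,j}$ is an $M$-neighborhood of $p$ lying inside $\cl U_{i,j}$, and for any sequence $(p_n,r_n)\to(p,r)$ in $\partial M\times[0,2]$, eventually $p_n\in U_{i,j}$, so $p_n=\Psi_{i,j}(v_n,2)$ with $v_n\to v$ by continuity of $\Psi_{i,j}^{-1}$. Thus $u(p_n,r_n)=\Psi_{i,j}(v_n,r_n)\to\Psi_{i,j}(v,r)=u(p,r)$. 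The hypothesis doing the heavy lifting throughout is (\ref{p:col-last}), which guarantees coherent gluing on the overlaps of the $\cl U_{i,j}$; without this compatibility the proposed $u$ could fail to be well-defined or injective despite each individual $\Psi_{i,j}$ being a homeomorphism.
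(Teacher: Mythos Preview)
Your proof is correct and follows the same approach as the paper's (very terse) proof: well-definedness and injectivity from condition~(\ref{p:col-last}), continuity from that of the $\Psi_{i,j}$, and the collar property from condition~(\ref{p:col-equiv}). One small correction: in your continuity argument, the pair $(i,j)$ with $p\in U_{i,j}$ is not unique---the $U_i$ for different $i$ genuinely overlap (this is precisely why condition~(\ref{p:col-last}) is needed)---but your argument only uses that \emph{some} such $(i,j)$ exists, so nothing breaks.
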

\begin{proof}
The well-definedness and the injectivity follow from the condition~(\ref{p:col-last}) above. This map $u$ is continuous because $\Psi_{i,j}$ is for all $i$ and $j$. The condition~(\ref{p:col-equiv}) further implies that this map $u$ is a collar embedding of the boundary.
\end{proof}

From the above claim and from the definability of $\Psi_{i,j}^w$, we obtain the desired formula $\on{collar-emb}(\underline{\kappa}, \pi,\rho,\pi')$ expressing the map $u$.  We complete the proof of part (\ref{p:col-nece}) in Theorem~\ref{thm:collar} by simply reparametrizing $u$ so that the level--$0$ set corresponds to the boundary.

For the third claim, and hence part (\ref{p:col-suff}) of the theorem, we note that the condition~(\ref{p:col-first}) is equivalent to  $\cl U^*$ being contained in a collar neighborhood.
Hence, we may simply start with a homeomorphism 
\[
u\co \partial M\times[0,2]\longrightarrow \cl U^*\]
that satisfies \[u(x,2) = x\in\partial M.\]
Using Ostrand's theorem (Lemma~\ref{lem:dim-facts} (\ref{p:ostrand})), 
we can write
\[
\partial M=\bigcup_{1\le i\le n} \cl \bar W_i=\bigcup_{1\le i\le n} \bar W_i\]
for some $\cl \bar W_i\sse\partial M$, each of whose components $\cl W_{i,j}$ is homeomorphic to $I^{n-1}$.
We have a natural homeomorphism
\[
u_{i,j}\co I^{n-1}\times[0,2]\longrightarrow U_{i,j}:=u(W_{i,j}\times [0,2]).\]
Denote by $p_{i,j}^{\mathbf{0}^k}$  the image of $(0,0,\ldots,0,2-1/2^{k-1})$ under this homeomorphism.
We can find a homeomorphism $\on{hor}_i$ that permutes the components $\cl U_{i,j}$ of $\cl U_i$ as in condition~(\ref{p:col-uij}).
We let $T_i^0:=\{p_{i,j}^\varnothing\}_j$ and $T_i^1:=\{p_{i,j}^{\mathbf{0}^k}\}_{j,k}$.
We further define
\[
V_{i,j}^w:=u_{i,j}(S^w),\]
and set 
\[
U_{i,j}^k:=\oplus_{w\in\Lambda^k} V_{i,j}^w, \quad U_{i}^0:=\bigsqcup_{j,k} U_{i,j}^{2k}.\]
The regular open sets $U_i^1, U_i^{0,v},U_i^{1,v}$ are similar and straightforward to define.
The homeomorphism $\on{vert}_i$ is clearly defined, so that $\on{vert}_i(p_{i,j}^w)=p_{i,j}^{w,\mathbf{0}}$. 
After decomposing $U_{i,j}^k$ modeled on $\{S^w\}$, we find $\underline{s}_i^{\pa(k),v}$ for the current setup using the uniform convergence theorem. Here, it is crucial that the diameters of the cubes $V_{i,j}^w$ converge to zero as they approach the boundary. This completes the proof of the case $(M,G)\in\cM$.

Slightly more care is needed in the measure preserving case $(M,G)\in\cM_{\vol}$. To guarantee the existence of a measure preserving flow avoiding issues as described in Figure~\ref{f:flow}, we need that the components of the supports of flow-generating homeomorphsms $\underline{s}_i^v$ to be sufficiently far from each other. More precisely, we will pick a sufficiently large $n_0>0$ depending on $M$, and replace
 condition~(\ref{p:col-flows1}) by the following two conditions; we also change the definition of the tuple $\underline{\kappa}$, which is now required to contain the group tuple variables $\underline{s}_{i,j}^w$ as below.
 \begin{enumerate}[(a)]
 \item[(f)']
    For each $k\in\{1,\ldots,n_0\}$, $\epsilon\in\{0,1\}$ and $w=(v_1,\ldots,v_k)\in\Lambda^k$, we have
    \[\underline{s}^{w}, \underline{s}^{\epsilon,w} \in G^n.\]
    We further have that
    \begin{align*}
    &\models \on{flows}_n\left(V_{i,j}^{w},\underline{s}_i^{w},
    p_{i,j}^{w}\right),\\
    &\Psi_{i,j}^w:=\Psi\left[V_{i,j}^{w},\underline{s}_i^{w},
    p_{i,j}^{w}\right]\co I^n \longrightarrow\cl V_{i,j}^{w}.
    \end{align*}
 \item[(f)'']
    For each $k>n_0$ and $w=(v_1,\ldots,v_k)$,
    after setting $w':=(v_{k-n_0+1},\ldots,v_k)$ we have that
    \begin{align*}
    &\models \on{flows}_n\left(V_{i,j}^{w},\underline{s}_i^{\pa(k),w'},
    p_{i,j}^{w}\right),\\
    &\Psi_{i,j}^w:=\Psi\left[V_{i,j}^{w},\underline{s}_i^{\pa(k),w'},
    p_{i,j}^{w}\right]\co I^n \longrightarrow\cl V_{i,j}^{w}.
    \end{align*}
 \end{enumerate}

Part (\ref{p:col-nece}) of Theorem \ref{thm:collar} is still proved in the same way, even independently of the choice of $n_0\ge 1$. For part (\ref{p:col-suff}), we choose $n_0$ sufficiently large, under a fixed metric and a measure on some chart neighborhood of $M$.
We will require that for each fixed $w':=(v_1,\ldots,v_{n_0})\in\Lambda^{n_0}$, 
each open set in the collection
\[
\left\{V_{i,j}^w\middle\vert w=(\ldots,v_1,\ldots,v_{n_0})\in\bigcup_{k>{n_0}}\Lambda^k\right\}\]
is contained in some closure--disjoint collection of open balls
\[
\left\{W_{i,j}^w\middle\vert w=(\ldots,v_1,\ldots,v_{n_0})\in\bigcup_{k>{n_0}}\Lambda^k\right\}\]
 with the additional requirement that $\vol(V_{i,j}^w)/\vol(W_{i,j}^w)$ is sufficiently small, in the sense of Theorem~\ref{thm:output-balls}. This guarantees the existence and the convergence of each measure preserving homeomorphism of the required form $\underline{s}^{\epsilon,w'}$, thus completing the proof.

\section{Completing the proof}\label{sec:homeo}
Cheeger and Kister~\cite{CK1970} proved that there exist only countably many homeomorphism types of compact manifolds. A key step in their proof is that the topological type of a manifold is invariant under ``small'' perturbations, in some quantitatively precise sense. 
As is more concretely described below, 
this step will be crucial for the construction of the sentences $\phi_M$ and $\phi_M^{\vol}$.
    
For positive integers $n,k$ and $\ell$, we denote by $\EE(n,k,\ell)$ the set of all tuples of embeddings
\[
\underline f=(f_{1,1},\ldots,f_{1,k},f_{2,1},\ldots,f_{2,\ell})\]
from $Q^n(2)$ to $\bR^{2n+1}$ such that the following conditions hold:
\begin{enumerate}[(i)]
    \item The following set is a compact connected $n$--manifold:
\[M=C(\underline f):=\bigcup_{i,j} \im f_{i,j}\sse\bR^{2n+1}.\]
    \item There exists a collar $u\co \partial M\times[-2,2]\longrightarrow M$ such that
    $u(x,-2)=x$ for all $x$.
    \item We have that
    \[M\setminus u(\partial M\times[-2,0))
    \sse\bigcup_i f_{1,j}(\inte Q^n(1))
    \sse\bigcup_i f_{1,j}(Q^n(2))
    \sse M\setminus u(\partial M\times[-2,-1]).\]
    \item
    For each $i=1,\ldots,\ell$, 
    the restriction
    \[
    f_{2,j}\restriction_{Q^{n-1}(2)\times\{-2\}}\]
    is an embedding of $Q^{n-1}(2)$ into $\partial M$
    such that
    \[
    f_{2,j}(x,t)=u(f_{2,j}(x,-2),t),\]
    where here $x\in Q^{n-1}(2)$ and $t\in[-2,2]$, and such that
    \[
    \partial M=\bigcup_i f_{2,j}(\inte Q^{n-1}(1)\times\{-2\}).\]
\end{enumerate}

Every compact $n$--manifold $M$ is homeomorphic to $C(\underline{f})$
for some tuple \[\underline{f}=(f_{i,j})\in\EE(n,k,\ell)\] as above, which we call as a \emph{parametrized cover} of $C(\underline{f})$.
The space $\EE(n,k,\ell)$ inherits the uniform separable metric from the space \[C^0(Q^n(2),\bR^{(2n+1)(k+\ell)}).\] The proof of Cheeger and Kister essentially boils down to the following rigidity result,
along with a deep result of Edwards and Kirby
on
deformation of embeddings in manifolds~\cite{EK1971}.
\begin{lem}\cite{CK1970}\label{lem:ck}
For each $\underline{f}\in\EE(n,k,\ell)$ and for each $\epsilon>0$,  there exists $\delta>0$ such that every $\underline{ g}\in\EE(n,k,\ell)$ that is at most $\delta$--far from $\underline f$ admits a homeomorphism 
\[ C(\underline{f})\longrightarrow  C(\underline{g})\] 
that is at most $\epsilon$--far from the identity map. \end{lem}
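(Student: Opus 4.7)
The plan is to construct the desired homeomorphism $\Phi\colon C(\underline f)\to C(\underline g)$ as the restriction of an ambient homeomorphism $H$ of $\bR^{2n+1}$ that is $\epsilon$-close to the identity and satisfies $H(C(\underline f)) = C(\underline g)$. The main analytic input is the Edwards--Kirby deformation theorem: for any topological embedding $e\colon Q^n(2) \to N$ of a compact set into a topological manifold $N$ and any $\eta > 0$, there exists $\zeta = \zeta(e,\eta) > 0$ such that any embedding $e'$ with $\|e - e'\|_\infty < \zeta$ can be written as $e' = h\circ e$ for some ambient homeomorphism $h$ of $N$ with $\|h - \Id_N\|_\infty < \eta$ and supported in a preassigned neighborhood of $\im e$.

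The construction proceeds in two stages, exploiting the split structure built into the definition of $\EE(n,k,\ell)$. First I would correct the boundary: applying Edwards--Kirby iteratively (in $N = \bR^{2n+1}$) to each $f_{2,j}\restriction_{Q^{n-1}(2)\times\{-2\}}$ against the corresponding $g_{2,j}\restriction_{Q^{n-1}(2)\times\{-2\}}$ produces a small ambient homeomorphism $H_\partial$ carrying $\partial C(\underline f)$ onto $\partial C(\underline g)$; by using the full $Q^{n-1}(2)\times[-2,2]$ parametrization and the collar identity $f_{2,j}(x,t) = u(f_{2,j}(x,-2),t)$ from condition (iv), one can moreover arrange that $H_\partial$ sends a closed sub-collar of $C(\underline f)$ onto one of $C(\underline g)$. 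Second, with the collars matched, the pushed-forward interior embeddings $H_\partial\circ f_{1,j}$ remain within roughly $\delta + \|H_\partial - \Id\|_\infty$ of the $g_{1,j}$, and a further sequence of Edwards--Kirby corrections $h_1,\ldots,h_k$, each supported away from the already-fixed collar, yields $H_{\mathrm{int}} = h_k\circ\cdots\circ h_1$ such that $H := H_{\mathrm{int}}\circ H_\partial$ takes $C(\underline f)$ onto $C(\underline g)$. Restricting $H$ to $C(\underline f)$ produces $\Phi$, and by allocating an error budget of $\epsilon/(k+\ell+1)$ at each application of Edwards--Kirby, the total displacement of $\Phi$ from the inclusion stays below $\epsilon$ provided the initial $\delta$ was chosen small enough.

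The main obstacle is that the charts $\{\im f_{i,j}\}$ of the parametrized cover genuinely overlap, so a correction of the $s$-th chart via Edwards--Kirby inevitably perturbs the previously corrected ones. To control this, the ambient homeomorphism produced at each step must be supported in a progressively shrinking neighborhood of the chart currently being corrected, and the Edwards--Kirby threshold $\zeta$ at step $s$ must be chosen taking into account the cumulative perturbation accrued at the previous $s-1$ steps; propagating these constraints backwards determines the initial $\delta = \delta(\underline f,\epsilon)$. A secondary subtlety arises at the boundary, where Edwards--Kirby must be applied in a way that respects the collar structure so that $H_\partial$ sends boundary to boundary; this is handled by first restricting to a codimension-zero slab containing only the collar, applying the theorem there, and extending by the identity. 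These bookkeeping issues are where the substantive work of the original Cheeger--Kister argument resides.
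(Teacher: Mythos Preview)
The paper does not actually prove this lemma; it is quoted as a result from Cheeger--Kister~\cite{CK1970}, with the remark that their argument ``essentially boils down to'' this statement together with the Edwards--Kirby deformation theorem~\cite{EK1971}. Your sketch is therefore not being compared against a proof in the paper but against the original source, and it is consistent with what the paper indicates: you correctly identify Edwards--Kirby as the engine, the two-stage collar-then-interior correction, and the inductive bookkeeping over overlapping charts as the substantive content. Since the paper treats the lemma as a black box, your outline goes well beyond what is required here; for the purposes of this paper it would suffice simply to cite~\cite{CK1970}.
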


We choose a sufficiently small $\delta>0$ for which the conclusion of Lemma~\ref{lem:ck} holds, and call it as a \emph{Cheeger--Kister number} of $\underline f\in\EE(n,k)$; for our purposes, we will further require $\delta$ to be rational.
Our strategy for proving Theorem~\ref{thm:main} is providing a sentence in $L_{\agape}$ which is modeled by an input manifold $M\sse\bR^{2n+1}$, such that the sentence holds for the structure $\agape(N,H)$
if and only if $N$ admits an embedding into Euclidean space that is within the Cheeger--Kister number of a fixed parametrized cover of $M$.

In order to execute this strategy, let us fix a pair $(M,G)\in\cM_{(\vol)}$ with $\dim M=n$.
We will slightly modify the definition in Theorem~\ref{thm:output-balls} by affine transformations, so that $\Psi[U,\underline{g},p]$ is a map from
$Q^n(2)$ into $M$, sending $(-2,\ldots,-2)$ to $p$.

We let $k$ and $\ell$ be positive integers, and consider a tuple
\[
\underline{f}=(f_{1,1},\ldots,f_{1,k},f_{2,1},\ldots,f_{2,\ell})\]
of functions in $C^0(\bR^n,\bR^{2n+1})$. 
Let us denote by $\on{EMB}(M,G;\underline{f})$ the collection of all the conditions below
from (\ref{p:emb-f}) through (\ref{p:emb-compat}); see also Figure~\ref{f:par-cover}:
\be[(a)]
\item\label{p:emb-f} each $f_{i,j}$ restricts to an embedding of $Q^n(2)$ into $\bR^{2n+1}$;
\item\label{p:emb-ugp} for all indices as above, we have some
\[U_{i,j}\in \ro(M),\quad  p_{i,j}\in\inte M,\quad  \underline{g}_{i,j}\in G^n\]
satisfying $\on{flows}_n(U_{i,j},\underline{g}_{i,j},p_{i,j})$, corresponding to the homeomorphism
\[ h_{i,j}:=\Psi\left[U_{i,j},\underline{g}_{i,j},p_{i,j}\right]\co Q^n(2)\longrightarrow \cl U_{i,j}\sse\inte M;\]
\item\label{p:emb-collar} 
there exists a collar \[u\co \partial M\times[-3,2]\longrightarrow M\]
such that $u\restriction_{\partial M\times\{-3\}}=1\restriction_{\partial M}$,
and such that
\[M\setminus u(\partial M\times[-3,0))
    \sse\bigcup_j h_{1,j}(\inte Q^n(1))
    \sse\bigcup_j h_{1,j}(Q^n(2))
    \sse M\setminus u(\partial M\times[-3,-1]);\]
\item\label{p:emb-boundary}    
for each $j=1,\ldots,\ell$, 
    the restriction
    \[
    h_{2,j}\restriction_{Q^{n-1}(2)\times\{-2\}}\]
    is an embedding of $Q^{n-1}(2)$ into $u(\partial M\times\{-2\})$
    such that
    \[
    h_{2,j}(x,t)=u(h_{2,j}(x,-2),t),\]
    where here $x\in Q^{n-1}(2)$ and $t\in[-2,2]$, and such that
    \[
    u(\partial M\times\{-2\})=\bigcup_j h_{2,j}(\inte Q^{n-1}(1)\times\{-2\}).\]
\item\label{p:emb-compat} whenever $x\in \cl U_{a,b}\cap \cl U_{c,d}$ for some $a,b,c,d$, we have \[f_{a,b}\circ h_{a,b}^{-1}(x)=f_{c,d}\circ h_{c,d}^{-1}(x).\]
\ee
The condition $\on{EMB}(M,G;\underline{f})$ implies that \[\underline{f}\circ \underline{h}^{-1}:=(f_{i,j}\circ h_{i,j}^{-1})\] defines an embedding  \[M':=M\setminus u(\partial M\times[-3,-2))\hookrightarrow\bR^{2n+1},\] and that the tuple $\underline{f}$ is a parametrized cover of the image.

\begin{figure}\centering
\begin{tikzpicture}[scale=.7]
\draw [ultra thick] (-4,-1.5) node [left] {\tiny $\partial M$} -- (4,-1.5) node [right] {\tiny $-3$};
\draw[->] (-3.5,-1) -- (-3.5,.5) node [above,left] {\tiny $M'$};
\draw [dashed] (-4,-1) -- (4,-1) node [right] {\tiny $-2$};
\draw [dashed] (-4,1) -- (4,1) node [right] {\tiny $\phantom{-}2$};
\draw [dashed] (-4,-.5) -- (4,-.5) node [right] {\tiny $-1$};
\draw [dashed] (-4,0) -- (4,0) node [right] {\tiny $\phantom{-}0$};
\draw [thick,rounded corners=5] (-3,0.25) -- ++(1,-.5) -- ++(2.5,.5) -- ++(0,.5) -- ++(-3.1,0.5) -- cycle ++ (0.7,0.3) node [] {\tiny $\im h_{1,i}$};
\draw [thick] (-0,-1) -- ++(3,0) -- ++(0,2) -- ++(-3,0) -- cycle ++ (1.7,1.2) node [] {\tiny $\im h_{2,j}$};
\end{tikzpicture}
\caption{Parts (c) and (d) of the condition $\on{EMB}(M,G;f)$.}
\label{f:par-cover}
\end{figure}
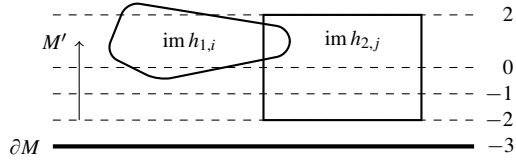

Recall the domain of the sort symbol $\cont_{n,2n+1}$ is $C^0(\bR^n,\bR^{2n+1})$.
By the preceding results, there exists a formula 
\[\on{Embed}_{n,k,\ell}(\underline{\chi}),\] expressing $\on{EMB}(M,G;\underline{f})$ in $\agape(M,G)$. 
We emphasize that although the maps $h_{i,j}$ do not belong to the universe of $\agape(M,G)$,  Theorem~\ref{thm:collar}
together with our access to the real numbers enables us to use such expressions. Let us record this fact:

\begin{lem}\label{lem:emb}
Let $(M,G)\in\cM_{(\vol)}$ satisfy $\dim M=n$.
For positive integers $n,k$ and $\ell$, 
there exists a formula $\embed_{n,k,\ell}(\underline{\chi})$ with a $(k+\ell)$--tuple of $\cont_{n,2n+1}$ variables 
\[
\underline{\chi}=(\chi_{1,1},\ldots,\chi_{1,k},\chi_{2,1},\ldots,\chi_{2,\ell})\]
in the $\agape$ language such that
\[\models\embed_{n,k,\ell}(\underline{f})\]
if and only if the condition $\on{EMB}(M,G;\underline{f})$ is satisfied.
\end{lem}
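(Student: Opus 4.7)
The plan is to simply translate each clause of $\on{EMB}(M,G;\underline{f})$ into an $L_{\agape}$--formula, making use of the tools established in Sections~\ref{sec:sorts}--\ref{sec:collar}. The maps $h_{i,j}=\Psi[U_{i,j},\underline{g}_{i,j},p_{i,j}]$ themselves are not elements of the universe of $\agape(M,G)$, but by Theorem~\ref{thm:output-balls} (with the reparametrization modification noted just before the lemma) their graphs are uniformly first order definable via $\on{Param}_n(u,\underline{\gamma},\pi,\underline{\rho},\pi')$; thus throughout the proof we may legitimately write abbreviations of the form $h_{i,j}(\underline{r})=q$ inside $L_{\agape}$--formulae.

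First I would existentially quantify over the data $\underline{u}=(u_{i,j})$, $\underline{\pi}=(\pi_{i,j})$, $\underline{\gamma}=(\underline{\gamma}_{i,j})$, and the tuple $\underline{\kappa}$ witnessing $\on{collar}(\underline{\kappa})$ as in Theorem~\ref{thm:collar}, together with the boundary-collar image variable; then I would conjoin $\bigwedge_{i,j}\on{flows}_n(u_{i,j},\underline{\gamma}_{i,j},\pi_{i,j})$ together with $\on{collar}(\underline{\kappa})$. Condition~(\ref{p:emb-f}) reduces to a universal statement about the function sort $\cont_{n,2n+1}$: $\chi_{i,j}$ is an embedding on $Q^n(2)$ iff distinct elements of $Q^n(2)$ produce distinct outputs, and this is directly expressible using the application symbol and the order on $\bR$. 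Condition~(\ref{p:emb-collar}) is expressed by quantifying over points $\pi'\in M$ and using $\on{collar-embed}(\underline{\kappa},\cdot,\cdot,\cdot)$ to test the three inclusions in terms of $h_{1,j}(\underline{r})$ for $\underline{r}\in\inte Q^n(1)$ or $\underline{r}\in Q^n(2)$, and similarly for~(\ref{p:emb-boundary}), where the additional constraint $h_{2,j}(\underline{x},t)=u(h_{2,j}(\underline{x},-2),t)$ is a direct equality between points of $M$ accessed through $\on{Param}_n$ and $\on{collar-embed}$.

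The compatibility condition~(\ref{p:emb-compat}) is handled as follows: for all $\underline{r}_1,\underline{r}_2\in Q^n(2)$ and all indices $(a,b),(c,d)$, and for all $\pi\in M$, if $\on{Param}_n(u_{a,b},\underline{\gamma}_{a,b},\pi_{a,b},\underline{r}_1,\pi)\wedge\on{Param}_n(u_{c,d},\underline{\gamma}_{c,d},\pi_{c,d},\underline{r}_2,\pi)$, then $\chi_{a,b}(\underline{r}_1)=\chi_{c,d}(\underline{r}_2)$; note that the hypothesis forces $\pi\in\cl U_{a,b}\cap\cl U_{c,d}$. Taking the conjunction of all of the above under the opening existential quantifier block produces the desired formula $\embed_{n,k,\ell}(\underline{\chi})$, and both directions of the claimed equivalence are immediate from the very setup of the translation.

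The only mildly delicate point, rather than a genuine obstacle, is remembering that $\on{Param}_n$ produces a homeomorphism $I^n\to\cl U$ rescaled to $Q^n(2)\to\cl U$, so the inclusions and matchings in~(\ref{p:emb-collar}) and~(\ref{p:emb-boundary}) must be phrased with respect to the reparametrized domain and with the correct basepoint image; once this bookkeeping is fixed, writing the formula is entirely routine.
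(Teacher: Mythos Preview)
Your proposal is correct and follows the same approach as the paper, which in fact offers essentially no proof beyond the sentence ``By the preceding results, there exists a formula $\on{Embed}_{n,k,\ell}(\underline{\chi})$ expressing $\on{EMB}(M,G;\underline{f})$ in $\agape(M,G)$,'' together with the remark that although the maps $h_{i,j}$ are not in the universe, Theorem~\ref{thm:collar} and access to the reals make the relevant conditions expressible. Your write-up spells out in more detail precisely what the paper leaves implicit.
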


We can now establish the main result of this paper.
\begin{proof}[Proof of Theorem~\ref{thm:main}]
We may assume that $n:=\dim M>1$ and that $M\sse\bR^{2n+1}$.
Consider a parametrized cover
\[\underline{f}\in\EE(n,k,\ell)\sse C^0(Q^n(2),\bR^{(2n+1)(k+\ell)})\] of $M=C(\underline{f})$.
We have a corresponding Cheeger--Kister rational number \[\delta=\delta(M,\underline{f})>0.\]
Let us pick  $\delta_0>0$ such that \[\sup_{\|x-y\|\le \delta_0}\| \underline{f}(x)-\underline{f}(y)\|<\delta/3.\]
We can find a partition $\{C_1,\ldots,C_s\}$ of $Q^n(2)$ having diameters less than $\delta_0$ such that each $C_i$ is the intersection of $Q^n(2)$ with a cube with rational corners.
Each $C_i$ is definable in $L_{\agape}$, since so is every rational number.
We arbitrarily pick $x_i$ in $C_i$, and choose $q_i\in\bQ^{(2n+1)(k+\ell)}$ such that
    \[
    \|\underline{f}(x_i)-q_i\|<\delta/3.\]    
    
Let us now consider the following conditions
for an arbitrary $(N,H)\in\cM_{(\vol)}$, which are first order expressible in $L_{\agape}$ by preceding results:
\begin{itemize}
\item $\dim N=n$;
\item some tuple $\underline{g}\in C^0(\bR^n,\bR^{(2n+1)(k+\ell)})$ satisfies that 
\[\agape(N,H)\models \embed_n(\underline{g}),\]
and also 
\[\sup_{x\in C_i}\| \underline{g}(x)-q_i\|<\delta/3.\]
\end{itemize}
The above conditions are obviously met in the case when $(N,H)=(M,G)$.
We also note that for each $x\in C_i$ that
    \[
    \|  \underline{g}(x) - \underline{f}(x)\|
    \le
    \|  \underline{g}(x) - q_i\| + \|q_i - \underline{f}(x_i)\| +  \|\underline{f}(x_i) - \underline{f}(x)\|<\delta.\]
By Lemma~\ref{lem:ck}, we see that $N$ is homeomorphic to $M$.\end{proof}

\section{Further questions}\label{sec:questions}
A large number of interesting open questions remain. We already mentioned Question~\ref{que:homeo-action}.
Part of the motivation for this question is the theory of critical regularity of groups, which seeks to distinguish between diffeomorphism groups of various regularities of a given manifold by the isomorphism types of finitely generated subgroups; cf.~\cite{KK2020crit,Mann:aa}. 
Along this line of question, one may ask whether or not the $C^k$--analogue of Theorem~\ref{thm:main} holds.

    \begin{que}\label{que:first-order-diffeo}
    Let $M$ be a compact, connected, smooth manifold, and let $N$ be an arbitrary smooth manifold. Is there a sentence $\phi_{k,M}$
    in the language of groups such that if $\Diff^k(N)$ satisfies $\phi_{k,M}$ then $N$ is diffeomorphic to $M$?
    \end{que}
    
    Relatedly, leaving the framework of first order rigidity, we have the following.
    
\begin{que}\label{que:critical-reg}
    Let $M$ be a compact, connected, smooth manifold. Is there a finitely generated (or countable) group $G_M$ such that $G_M$
    acts faithfully by $C^k$ diffeomorphisms of a compact, connected, smooth manifold $N$ of the same dimension as $M$ if and only $N$ is $C^k$ diffeomorphic to $M$?
    \end{que}
    
    The discussion in the present article depended heavily on the compactness of the comparison manifold.

    \begin{que}\label{que:noncompact}
    Let $M$ be an arbitrary manifold. Under what conditions is there a sentence $\phi_M$ in the language of groups
    such that if $N$ is an arbitrary manifold then
    $\Homeo(N)$ satisfies $\phi_M$ if and only if $N$ is homeomorphic to $M$? More generally, under what conditions does $\Homeo(M)\equiv\Homeo(N)$ imply $M\cong N$?
    \end{que}
    
    We conclude by asking what the weakest hypotheses on $\cH$ can be.
    
    \begin{que}\label{que:minimal assumptions}
    For what classes of subgroups of $\Homeo(M)$ do the conclusions of Theorem~\ref{thm:main}
    hold?
    \end{que}

    A partial answer to Question~\ref{que:minimal assumptions} is given in~\cite{KdlN2024}, as noted
    in Remark~\ref{rem:other-results}.

\section*{Acknowledgements}
The first and the third author are supported by Mid-Career Researcher Program (RS-2023-00278510) through the National Research Foundation funded by the government of Korea.
The first and the third authors are also supported by KIAS Individual Grants (MG073601 and MG084001, respectively) at Korea Institute for Advanced Study
and by Samsung Science and Technology Foundation under Project Number SSTF-BA1301-51. The second author is partially supported by NSF Grants DMS-2002596
and DMS-2349814, Simons Foundation International Grant SFI-MPS-SFM-00005890. The authors thank M.~Brin, J.~Hanson and O.~Kharlampovich, and for helpful discussions. The authors are deeply grateful to C.~Rosendal for introducing the result of Cheeger--Kister to them.
  
  \bibliographystyle{amsplain}
  \bibliography{ref}
  
\end{document}